\newtheorem{remark}{Remark}[section]
\title{Sparse power methods for large and sparse higher-order PageRank problem}
\author{Jun Huang\thanks{School of Mathematics, China University of Mining and Technology, 221116, Jiangsu, P.R. China. E-mail: {\tt hj\_math@126.com}.}
       \and Gang Wu\thanks{Corresponding author. School of Mathematics,
China University of Mining and Technology, Xuzhou, 221116, Jiangsu, P.R. China.
E-mail: {\tt gangwu@cumt.edu.cn} and {\tt gangwu76@126.com}. This work is supported by the Fundamental Research Funds for the Central Universities of China under grant 2019XKQYMS89.}
}
\begin{document}

\maketitle

\begin{abstract}
Higher-order Markov chain plays an important role in analyzing a variety of stochastic processes over time
in multi-dimensional spaces. One of the important applications is the study of the multilinear PageRank problem. However, unlike the high-order PageRank problem, the existence and uniqueness of multilinear PageRank vector is not guaranteed theoretically. Moreover, as an alternative, the approximation of the multilinear PageRank vector to the high-order PageRank tensor can be poor in practice. A commonly used technique for the higher-order PageRank problem is the power method that is computationally intractable for large-scale problems. The truncated power method proposed recently provides us with another idea to solve this problem, however, its accuracy and efficiency can be poor in practical computations. In this work, we revisit the higher-order PageRank problem and consider how to solve it efficiently. The contribution of this work is as follows.
First, we accelerate the truncated power method for high-order PageRank. In the improved version, it is neither to form and store the vectors arising from the dangling states, nor to store an auxiliary matrix.
Second, we propose a truncated power method with {\it partial updating} to further release the overhead, in which
one only needs to update some important columns of the approximation in each iteration.
On the other hand, the truncated power method solves a modified high-order PageRank model for convenience, which is not mathematically equivalent to the original one.
Thus, the third contribution of this work is to propose a sparse power method with partial updating for the original higher-order PageRank problem. The convergence of all the proposed methods are discussed.
Numerical experiments on large and sparse real-world and synthetic data sets are performed. The numerical results show the superiority of our new algorithms over some state-of-the-art ones for large and sparse higher-order PageRank problems.
\end{abstract}

\begin{keywords}
Higher-order Markov chain, Higher-order PageRank, Multilinear PageRank, Truncated power method, Sparse power method.
\end{keywords}

\begin{AMS}
65F15, 65F10, 65F50
\end{AMS}

\section{Introdution}
Markov chain is a stochastic process that takes advantage of past and current prior information to predict the future information \cite{CKL1979A, 2021Stationary, WJ2009A}.
The first-order Markov chain only uses current information, and one of its representative applications is Google's PageRank \cite{Gleich2105PBW,langville2011google,page1999pagerank,Wu2012A}.
However, the first-order Markov chain does not consider historical information that is often significant in practical use.
Therefore, the time limitation makes this model sometimes seriously affect the accuracy of prediction, thereby reducing the credibility of conclusions.

Higher-order Markov chain is a generalization of the first-order Markov chain \cite{Ching2004,Ching2006,Ching2008,Raftery1985A}. It exploits the current state of several consecutive moments to predict the future state.
The advantage of the higher-order Markov chain lies in its high memory and ability to consider huge historical information, allowing us to make reasonable judgments in analyzing a variety of stochastic processes over time in multi-dimensional spaces \cite{Wen2020Multilinear}.
Corresponding to the PageRank model, the higher-order Markov chain also have an important application named higher-order PageRank \cite{Gleich2014Multilinear}.
However, one has to store a {\it dense} higher-order PageRank tensor in the higher-order PageRank problem, which is prohibitive for large-scale data sets.

In order to partially overcome this difficulty, Gleich, Lim, and Yu considered multilinear PageRank instead \cite{Gleich2014Multilinear,Li2014A}, which is an alternative to higher-order PageRank.
The multilinear PageRank problem has received great attention in recent years; see \cite{Cipolla2019Extrapolation,Gleich2014Multilinear,Pei2020A,Guo2018A,HW,Li2017The, Wen2020Multilinear,Li2014A,Dongdong2019Relaxation,Meini2018Perron} and the references therein.
For instance, Gleich {\it et al.} proposed five algorithms for the multilinear PageRank problem \cite{Gleich2014Multilinear}.
Cipolla {\it et al.} took advantage of extrapolation skills to accelerate the simplified topological $\varepsilon$-algorithm in its restarted form for multilinear PageRank \cite{Cipolla2019Extrapolation}.
Guo {\it et al.} proposed a modified Newton method and analyzed a residual-based error bound for the multilinear PageRank vector \cite{Pei2020A, Guo2018A}.
Meini and Poloni investigated Perron-based algorithms for multilinear PageRank \cite{Meini2018Perron}.
Liu {\it et al.} considered several relaxation algorithms for solving the tensor equation arising from the multilinear PageRank problem \cite{Dongdong2019Relaxation}.

Unfortunately, for a given damping factor $0<\alpha<1$, the existence and uniqueness of multilinear PageRank vector is not
guaranteed theoretically \cite{Gleich2014Multilinear}. Recently, some theoretical results are established on the existence and uniqueness of the multilinear PageRank vector; see \cite{Li2017The, Wen2020Multilinear,Li2014A,Dongdong2019Relaxation} and the references therein.
However, the conditions for these results are often complicated and some of them are difficult to use in practice.
As a comparison, the higher-order PageRank tensor always exists and is unique \cite{Gleich2014Multilinear}.
On the other hand, the approximation of the multilinear PageRank vector to the high-order PageRank tensor can be poor.
For instance, for a multilinear PageRank problem of order 3, the approximation obtained from multilinear PageRank is a rank-$1$ {\it symmetric} matrix, and it is a poor approximation to a two-order PageRank tensor that is a {\it nonsymmetric} matrix.

Thus, it is interesting to solve the higher-order PageRank problem directly, provided that both computational overhead and storage requirements could be reduced significantly.
In \cite{WuYubao}, Wu {\it et al.} proposed to formulate second-order Markov chains by using edge-to-edge transition
probabilities. The main idea is to construct the corresponding edge-to-edge formulated matrix according to the sparsity structure and dangling nodes structure. As the number of edges is about $n^2$, where $n$ is the number of
nodes, the computational cost will be very high for large-scale problems.
Recently, Ding {\it et al.} considered the large and sparse higher-order PageRank problem \cite{Ding2018Fast}.
The main idea of their work is to approximate large-scale solutions of sparse Markov chains by
two components called sparsity component and rank-one component, and
a truncated power method was proposed. In this method, one can determine solutions by formulating and solving sparse and rank-one
optimization problems via closed form solutions.
However, as the computational model is a modification to the original higher-order PageRank model, the error between the approximation and the exact solution can be large. Moreover, the truncated power method may converge slowly for large-scale problems.
Therefore, it is urgent to seek new technologies to accelerate the truncated power method.

Instead of the multilinear PageRank problem, we revisit the higher-order PageRank problem and consider how to solve it efficiently in this work. The contribution of this work is as follows.
First, we accelerate the truncated power method for high-order PageRank. In the improved version, there is no need to form and store the vectors arising from the dangling states, nor to store an auxiliary matrix of size $n$, where $n$ is the number of nodes.
Second, motivated by the fact that we are often concerned with several webpages with {\it top} PageRank values,
we propose a truncated power method with {\it partial updating} to further release the overhead. In this strategy,
one only needs to update some {\it important} columns of the approximation in each iteration.
Third, the truncated power method applies to a modified high-order PageRank model that is not mathematically equivalent to the original one.
In this work, we propose a sparse power method with partial updating for the original higher-order PageRank problem.

The organization of this paper is as follows.
In Section \ref{back}, we briefly review the higher-order PageRank problem and the multilinear PageRank problem.
In Section \ref{TPM}, we introduce the truncated power method due to Ding {\it et al.} for the higher-order PageRank problem.
To improve the truncated power method, we propose a variation on the truncated power method and a truncated power method with partial updating in Section \ref{rt}. The convergence of the proposed methods is established.
In Section \ref{irt}, we put forward a sparse power method with higher accuracy for the original higher-order PageRank problem, and take advantage of the idea of partial updating to accelerate this method. The convergence of the two methods is analysed.
Numerical experiments are performed on large and sparse real-world and synthetic data sets in Section \ref{ne}, to show the efficiency of the proposed strategy and the superiority of the proposed algorithms. Some concluding remarks are given in Section \ref{sec7}.

%Now let us introduce some notations used in this paper.
%In some places, we use MATLAB indexing notation, such as $\mathcal{A}(i, j, s)$.
%We use .

\section{Higher-order PageRank and multilinear PageRank}\label{back}
An order-$m$, $n$-dimensional tensor has $m$ indices that range from $1$ to $n$.
Without loss of generality, we assume that $m=3$ throughout this paper.
A transition tensor is a stochastic tensor that is nonnegative and where the sum over the first index is $1$.
In this section, we briefly review the higher-order PageRank and the multilinear PageRank problems \cite{Gleich2014Multilinear}.
In \cite{Gleich2014Multilinear}, Gleich {\it et al.} introduced the following {\it higher-order PageRank problem} that is a generalization to the classical PageRank problem \cite{Gleich2105PBW,langville2011google,page1999pagerank,Wu2012A}.
\begin{definition}\cite[Definition 3.2]{Gleich2014Multilinear}
Let $\mathcal{P}$ be an order-$m$, $n$-dimensional transition tensor representing an order-$(m-1)$ Markov chain, $\alpha \in (0,1)$ be a probability, and $\mathbf{v} \in \mathbb{R}^{n}$ be a stochastic vector.
Then the higher-order PageRank tensor $\mathcal{X}$ is the order-$(m-1)$, $n$-dimensional tensor that solves the linear system
\begin{equation}\label{2.2}
\mathcal{X}(i, j, \ldots, \ell)=\alpha \sum_{k} \mathcal{P}(i, j, \ldots, \ell, k) \mathcal{X}(j, \ldots, \ell, k)+(1-\alpha) v_{i} \sum_{k} \mathcal{X}(j, \ldots, \ell, k).
\end{equation}
\end{definition}
That is, the tensor $\mathcal{X}$ is the stationary distribution of a higher-order Markov chain, and we have the following result.
\begin{theorem}\cite[Corollary 3.5]{Gleich2014Multilinear}\label{lem-2}
The higher-order PageRank stationary distribution tensor $\mathcal{X}$ always exists and is unique. Also, the standard PageRank iteration will result in a $1$-norm error of $2(1 - (1 - \alpha)^{m-1})^{k}$ after $(m-1)k$ iterations.
\end{theorem}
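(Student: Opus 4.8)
The plan is to reduce the tensor fixed-point equation \eqref{2.2} to the stationary-distribution problem of an ordinary (first-order) Markov chain on the enlarged state space of $(m-1)$-tuples, and then to exploit the special ``shift-and-teleport'' structure of that chain. First I would flatten the order-$(m-1)$ tensor $\mathcal{X}$ into a vector $x\in\mathbb{R}^{n^{m-1}}$ indexed by tuples $(i_1,\dots,i_{m-1})$, and rewrite \eqref{2.2} as $x=Mx$ with $M=\alpha\widetilde{P}+(1-\alpha)\widetilde{R}$, where $\widetilde{P}$ is the column-stochastic matrix obtained by combining the transition tensor $\mathcal{P}$ with the shift that prepends the newly generated index and discards the oldest one, and $\widetilde{R}$ encodes the corresponding teleport step in which the new index is drawn from $\mathbf{v}$. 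One checks directly that $M$ is column-stochastic, so the power iteration $x^{(t+1)}=Mx^{(t)}$ keeps iterates on the probability simplex, $\mathcal{X}$ is precisely the stationary distribution of this chain, and ``the standard PageRank iteration'' is exactly $x\mapsto Mx$.

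The heart of the argument is a Doeblin-type minorization that quantifies how fast the chain forgets its initial state. In one step the chain prepends a new coordinate and discards its oldest one; the new coordinate is an independent draw from $\mathbf{v}$ with probability $1-\alpha$ (a teleport) and otherwise follows $\mathcal{P}$. If $m-1$ consecutive steps are all teleports --- an event of probability $(1-\alpha)^{m-1}$, since the teleport decisions are independent across steps --- then every original coordinate has been shifted out and the state consists of $m-1$ i.i.d.\ draws from $\mathbf{v}$, \emph{independently of the starting tuple}. Writing $\nu=\mathbf{v}^{\otimes(m-1)}$ for this product distribution, this yields the entrywise bound
\begin{equation}
\bigl(M^{\,m-1}\bigr)_{s',s}\;\ge\;(1-\alpha)^{m-1}\,\nu_{s'}\qquad\text{for all states }s,s',
\end{equation}
i.e.\ $M^{\,m-1}\ge(1-\alpha)^{m-1}\,\nu\mathbf{1}^{\mathsf{T}}$. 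Verifying this minorization carefully --- tracking the shift bookkeeping and the independence of the teleport draws so that the limiting state is genuinely start-independent --- is the main obstacle; everything else is routine.

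Granting the minorization, I would write $M^{\,m-1}=(1-\alpha)^{m-1}\,\nu\mathbf{1}^{\mathsf{T}}+\bigl(1-(1-\alpha)^{m-1}\bigr)Q$, where $Q\ge0$ has columns summing to $1$. For any two stochastic vectors $x,y$ we have $\mathbf{1}^{\mathsf{T}}(x-y)=0$, so the rank-one part annihilates the difference and
\begin{equation}
\bigl\|M^{\,m-1}(x-y)\bigr\|_1=\bigl(1-(1-\alpha)^{m-1}\bigr)\,\bigl\|Q(x-y)\bigr\|_1\le\bigl(1-(1-\alpha)^{m-1}\bigr)\,\|x-y\|_1,
\end{equation}
using that a column-stochastic $Q$ is a $1$-norm nonexpansion. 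Thus $M^{\,m-1}$ is a contraction on the (complete) set of stochastic vectors, and the Banach fixed-point theorem gives a unique fixed point $\pi$; since $M\pi$ is again fixed by $M^{\,m-1}$, uniqueness forces $M\pi=\pi$, so $\mathcal{X}$ (the unflattening of $\pi$) exists and is unique.

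Finally, for the convergence statement I would iterate the contraction over $k$ blocks of $m-1$ steps: with $t=(m-1)k$,
\begin{equation}
\bigl\|x^{(t)}-\pi\bigr\|_1=\bigl\|M^{\,(m-1)k}(x^{(0)}-\pi)\bigr\|_1\le\bigl(1-(1-\alpha)^{m-1}\bigr)^{k}\,\|x^{(0)}-\pi\|_1\le 2\bigl(1-(1-\alpha)^{m-1}\bigr)^{k},
\end{equation}
the last step because $x^{(0)}$ and $\pi$ are both stochastic, whence $\|x^{(0)}-\pi\|_1\le2$. This is exactly the $1$-norm error $2(1-(1-\alpha)^{m-1})^{k}$ after $(m-1)k$ iterations claimed in Theorem~\ref{lem-2}.
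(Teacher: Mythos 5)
This theorem is quoted in the paper directly from \cite[Corollary 3.5]{Gleich2014Multilinear} without an in-paper proof, and your argument is correct and is essentially the one underlying the cited result: reduce to a first-order chain on the space of $(m-1)$-tuples, observe the entrywise minorization $M^{m-1}\ge(1-\alpha)^{m-1}\,\nu\mathbf{1}^{\mathsf{T}}$ coming from the all-teleport branch of the expansion of $(\alpha\widetilde{P}+(1-\alpha)\widetilde{R})^{m-1}$, and convert it into a $1$-norm contraction with factor $1-(1-\alpha)^{m-1}$ per block of $m-1$ steps. Your remaining steps (splitting off the rank-one part against a column-stochastic remainder $Q$, invoking the Banach fixed-point theorem on the simplex, upgrading the fixed point of $M^{m-1}$ to a fixed point of $M$, and bounding $\|x^{(0)}-\pi\|_{1}\le 2$) are all sound and yield exactly the stated error $2(1-(1-\alpha)^{m-1})^{k}$ after $(m-1)k$ iterations.
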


Specifically, as $m=3$, we have $\mathcal{X}=\boldsymbol{X} \in \mathbb{R}^{n \times n}$, and \eqref{2.2} reduces to
\begin{equation}\label{3.1}
\boldsymbol{X_{ij}} = \alpha \sum_{k} \mathcal{P}_{ijk} \boldsymbol{X_{jk}} + (1-\alpha) v_{i} \sum_{k} \boldsymbol{X_{jk}}.
\end{equation}
The above equation can be reformulated as \cite{Gleich2014Multilinear}
\begin{equation}\label{3.4}
vec(\boldsymbol{X}) = [\alpha \boldsymbol{P} + (1-\alpha)\boldsymbol{V}] vec(\boldsymbol{X}),
\end{equation}
where $\boldsymbol{P} \in \mathbb{R}^{n^{2} \times n^{2}}$, $\boldsymbol{I} \in \mathbb{R}^{n \times n}$ is an identity matrix, and
\begin{equation}
\boldsymbol{V} = \mathbf{e}^{\boldsymbol{T}} \otimes \boldsymbol{I} \otimes \mathbf{v} \in \mathbb{R}^{n^{2} \times n^{2}}.
\end{equation}
Here $\mathbf{e}$ is the vector of all ones, $vec(\cdot)$ is the ``vec operator" that stacks the columns
of a matrix into a long vector, and $\otimes$ is the Kronecker product.

Therefore,
\begin{equation}\label{3.2}
\boldsymbol{X}(:,j)=\alpha \mathbf{P}_{j} \boldsymbol{X}(j,:)^{\boldsymbol{T}}+(1-\alpha) \widetilde{\mathbf{V}}\boldsymbol{X}(j,:)^{\boldsymbol{T}},\quad j=1,2,\ldots,n,
\end{equation}
where $\widetilde{\mathbf{V}} = \mathbf{v} \mathbf{e}^{\boldsymbol{T}} \in \mathbb{R}^{n \times n}$, and
\begin{equation}\label{eq25}
\mathbf{P}_{j}=\left[\begin{array}{cccc}
\mathcal{P}_{1 j 1} & \mathcal{P}_{1 j 2} & \cdots & \mathcal{P}_{1 j n} \\
\mathcal{P}_{2 j 1} & \mathcal{P}_{2 j 2} & \cdots & \mathcal{P}_{2 j n} \\
\vdots & \vdots & \ddots & \vdots \\
\mathcal{P}_{n j 1} & \mathcal{P}_{n j 2} & \cdots & \mathcal{P}_{n j n}
\end{array}\right] \in \mathbb{R}^{n \times n},\quad j=1,2,\ldots,n,
\end{equation}
are column stochastic matrices.
In practice, however, we can only get the {\it sparse} matrices $\mathbf{Q}_{j}$ rather than the dense matrices $\mathbf{P}_{j}$ \cite{Gleich2014Multilinear}
\begin{equation*}
\mathbf{Q}_{j}=\left[\begin{array}{cccc}
\mathcal{Q}_{1 j 1} & \mathcal{Q}_{1 j 2} & \cdots & \mathcal{Q}_{1 j n} \\
\mathcal{Q}_{2 j 1} & \mathcal{Q}_{2 j 2} & \cdots & \mathcal{Q}_{2 j n} \\
\vdots & \vdots & \ddots & \vdots \\
\mathcal{Q}_{n j 1} & \mathcal{Q}_{n j 2} & \cdots & \mathcal{Q}_{n j n}
\end{array}\right] \in \mathbb{R}^{n \times n},\quad j=1,2,\ldots,n.
\end{equation*}
The matrices $\{\mathbf{Q}_{j}\}_{j=1}^n$ model the original sparse data, and are often not column random matrices. Let $\mathbf{d}_{j} \in \mathbb{R}^{n}$ be the vectors contain the entries arising from the dangling states of $\mathbf{Q}_{j}$\footnote{When the elements $\mathcal{Q}_{ijk}$ are equal to zero for all $i=1,2,\ldots,n$, the state $(j,k)$ is called the {\it dangling state}, which refers to no connection from the state $(j,k)$ \cite{Gleich2014Multilinear}.}.
Similar to the classical PageRank model \cite{langville2011google,page1999pagerank}, we can make dangling corrections to $\mathbf{Q}_{j}$, i.e.,
$$
\mathbf{P}_{j} = \mathbf{Q}_{j} + \frac{1}{n} \mathbf{e} \mathbf{d}_{j}^{\boldsymbol{T}},\quad j=1,2,\ldots,n,
$$
which are stochastic matrices.

It is challenging to compute the stationary probability distribution of large and sparse high-order Markov
chains.
Mathematically, as the higher-order PageRank problem is the stationary distribution of a high-order Markov chain, the power method is a nature choice for its simplicity and efficiency \cite{WJ2009A}.
According to \eqref{3.2}, given a nonnegative matrix $\mathbf{A} \in \mathbb{R}_{+}^{n \times n}$, we define the operator $\mathscr{W}:~\mathbb{R}^{n \times n} \mapsto \mathbb{R}^{n \times n}$ as follows
\begin{eqnarray}\label{2.6}
\mathscr{W}(\mathbf{A})(:,j)&=&\alpha (\mathbf{P}_{j}\mathbf{A}(j,:)^{\boldsymbol{T}}) + (1-\alpha)\widetilde{\mathbf{V}}\mathbf{A}(j,:)^{\boldsymbol{T}}\nonumber\\
&=&{\small \alpha (\mathbf{Q}_{j} \mathbf{A}(j,:)^{\boldsymbol{T}}) + \frac{\alpha}{n}(\mathbf{d}_{j}^{\boldsymbol{T}} \mathbf{A}(j,:)^{\boldsymbol{T}}) \mathbf{e} + (1-\alpha)\|\mathbf{A}(j,:)\|_{1}\mathbf{v}}.
\end{eqnarray}
%\end{itemize}
%\begin{equation*}
%\mathscr{W}: %\mathbb{R}^{n \times n} \mapsto \mathbb{R}^{n \times n},
%\mathscr{W}(\mathbf{A})(:,j):= \alpha \mathbf{P}_{j}\mathbf{A}(j,:)^{\boldsymbol{T}} + (1-\alpha)\mathbf{V}\mathbf{A}(j,:)^{\boldsymbol{T}}.
%\end{equation*}
Given the initial guess $\boldsymbol{X}^{(0)} \in \mathbb{R}_{+}^{n}$ with its $l_{1}$-norm being 1, i.e., $\|\boldsymbol{X}^{(0)}\|_{l_{1}} = 1$, then the power method resorts to the following iterations
\begin{equation*}
\boldsymbol{X}^{(q+1)} = \mathscr{W}(\boldsymbol{X}^{(q)}),\quad q=0,1,\ldots,
\end{equation*}
and the main overhead is to update the columns of approximation in the following way
\begin{equation}\label{4.3}
\boldsymbol{X}^{(q+1)}(:,j) = \alpha (\mathbf{Q}_{j} \boldsymbol{X}^{(q)}(j,:)^{\boldsymbol{T}}) + \frac{\alpha}{n}(\mathbf{d}_{j}^{\boldsymbol{T}} \boldsymbol{X}^{(q)}(j,:)^{\boldsymbol{T}}) \mathbf{e} + (1-\alpha)\|\boldsymbol{X}^{(q)}(j,:)\|_{1}\mathbf{v},
\end{equation}
for $j=1,2,\ldots,n$. By Theorem \ref{lem-2}, the solution of \eqref{3.4} exists and is unique, and the power method converges as $0<\alpha<1$ \cite{WJ2009A}.

However, it is required to store the {\it dense} higher-order PageRank tensor $\mathcal{X}$ in the higher-order PageRank problem, which is prohibitive, especially for large-scale data sets. In order to partially overcome this difficulty, Gleich {\it et al.} \cite{Gleich2014Multilinear} considered the following multilinear PageRank vector due to Li and Ng \cite{Li2014A}, which is an alternative to the higher-order PageRank tensor.

\begin{definition}\cite[Definition 4.1]{Gleich2014Multilinear}
Let $\mathcal{P}$ be an order-$m$, $n$-dimensional transition tensor representing an order-$(m-1)$ Markov chain, $\alpha \in (0,1)$ be a probability, and $\mathbf{v} \in \mathbb{R}^{n}$ be a stochastic vector.
Then the multilinear PageRank vector is a stochastic solution of the following system of polynomial equations
\begin{equation}\label{2.3}
\mathbf{x}=\alpha \mathcal{P} \mathbf{x}^{(m-1)} + (1-\alpha)\mathbf{v},
\end{equation} %$\boldsymbol{R} \in \mathbb{R}^{n \times n^{m-1}}$
or equivalently
\begin{equation}\label{eq29}
\mathbf{x} = \alpha \boldsymbol{R} (\underbrace{\mathbf{x} \otimes \cdot \cdot \cdot \otimes \mathbf{x}}_{m-1\ terms}) + (1-\alpha)\mathbf{v},
\end{equation}
where
\begin{equation}\label{eqn210}
\boldsymbol{R}=\left[\begin{array}{ccc|ccc|c|ccc}
\mathcal{P}_{111} & \cdots & \mathcal{P}_{1n1} & \mathcal{P}_{112} & \cdots & \mathcal{P}_{1n2} & \cdots & \mathcal{P}_{11n} & \cdots & \mathcal{P}_{1nn} \\
\mathcal{P}_{211} & \cdots & \mathcal{P}_{2n1} & \mathcal{P}_{212} & \cdots & \mathcal{P}_{2n2} & \cdots & \mathcal{P}_{21n} & \cdots & \mathcal{P}_{2nn} \\
\vdots & \ddots & \vdots & \vdots & \ddots & \vdots & \vdots & \vdots & \ddots & \vdots \\
\mathcal{P}_{n11} & \cdots & \mathcal{P}_{nn1} & \mathcal{P}_{n12} & \cdots & \mathcal{P}_{nn2} & \cdots & \mathcal{P}_{n1n} & \cdots & \mathcal{P}_{nnn}
\end{array}\right]
\end{equation}
is a column stochastic matrix of the flattened tensor $\mathcal{P}$ along the first index.
\end{definition}

The following result shows the existence and uniqueness of the multilinear PageRank vector.
\begin{theorem}\cite[Theorem 4.3]{Gleich2014Multilinear}\label{lem-2.5}
Let $\mathcal{P}$ be an order-$m$ stochastic tensor and $\mathbf{v}$ be a nonnegative vector. Then the multilinear PageRank equation
\begin{equation*}
\mathbf{x}=\alpha \mathcal{P} \mathbf{x}^{(m-1)}+(1-\alpha) \mathbf{v}
\end{equation*}
has a unique solution when $\alpha \in (0,\frac{1}{m-1})$.
\end{theorem}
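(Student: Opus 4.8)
The plan is to recognize the right-hand side as a fixed-point map on the probability simplex and to show that, under the stated bound on $\alpha$, this map is a contraction in the $1$-norm; \emph{existence and uniqueness} then follow simultaneously from the Banach fixed-point theorem. Let $\Delta = \{\mathbf{x} \in \mathbb{R}^n : \mathbf{x} \ge 0,\ \mathbf{e}^{\boldsymbol{T}}\mathbf{x} = 1\}$ be the probability simplex and define, using the flattened form \eqref{eq29},
$$
F(\mathbf{x}) = \alpha \boldsymbol{R}(\underbrace{\mathbf{x} \otimes \cdots \otimes \mathbf{x}}_{m-1\ \mathrm{terms}}) + (1-\alpha)\mathbf{v}.
$$
First I would verify that $F$ maps $\Delta$ into itself. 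Since $\boldsymbol{R}$ is column stochastic and a Kronecker product of stochastic vectors is again stochastic, $\boldsymbol{R}(\mathbf{x} \otimes \cdots \otimes \mathbf{x})$ is a stochastic vector whenever $\mathbf{x}\in\Delta$; as $\mathbf{v}$ is stochastic, $F(\mathbf{x})$ is a convex combination of stochastic vectors and hence lies in $\Delta$. Thus $F:\Delta\to\Delta$, and $\Delta$, being closed and bounded in finite dimension, is a complete metric space under the $1$-norm.

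The core of the argument is the contraction estimate. For $\mathbf{x},\mathbf{y}\in\Delta$ the constant term cancels, giving
$$
F(\mathbf{x}) - F(\mathbf{y}) = \alpha\, \boldsymbol{R}\big((\mathbf{x}\otimes\cdots\otimes\mathbf{x}) - (\mathbf{y}\otimes\cdots\otimes\mathbf{y})\big),
$$
and I would expand the difference of $(m-1)$-fold tensor powers by the telescoping identity
$$
\underbrace{\mathbf{x}\otimes\cdots\otimes\mathbf{x}}_{m-1} - \underbrace{\mathbf{y}\otimes\cdots\otimes\mathbf{y}}_{m-1} = \sum_{i=1}^{m-1} \big(\underbrace{\mathbf{x}\otimes\cdots\otimes\mathbf{x}}_{i-1}\big) \otimes (\mathbf{x}-\mathbf{y}) \otimes \big(\underbrace{\mathbf{y}\otimes\cdots\otimes\mathbf{y}}_{m-1-i}\big).
$$
Because the $1$-norm is multiplicative over Kronecker products and $\|\mathbf{x}\|_1=\|\mathbf{y}\|_1=1$, each summand has $1$-norm exactly $\|\mathbf{x}-\mathbf{y}\|_1$, so the whole difference has $1$-norm at most $(m-1)\|\mathbf{x}-\mathbf{y}\|_1$. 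Column stochasticity of $\boldsymbol{R}$ yields $\|\boldsymbol{R}\mathbf{z}\|_1 \le \|\mathbf{z}\|_1$ for \emph{every} vector $\mathbf{z}$ (not only nonnegative ones), whence
$$
\|F(\mathbf{x}) - F(\mathbf{y})\|_1 \le \alpha(m-1)\,\|\mathbf{x}-\mathbf{y}\|_1 .
$$

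Finally, for $\alpha \in (0,\tfrac{1}{m-1})$ the Lipschitz constant $\alpha(m-1)$ is strictly less than $1$, so $F$ is a contraction on the complete metric space $\Delta$; the Banach fixed-point theorem then provides a unique fixed point, which is precisely the unique stochastic solution of the multilinear PageRank equation. I expect the main obstacle to be the telescoping step together with its norm bookkeeping: one must confirm that the single interior factor $(\mathbf{x}-\mathbf{y})$ carries all the variation while every remaining stochastic factor contributes only a multiplicative factor of $1$, and that $\|\boldsymbol{R}\mathbf{z}\|_1$ is controlled for sign-indefinite $\mathbf{z}$ (which is what makes the contraction—and hence uniqueness—valid rather than mere non-expansiveness on $\Delta$).
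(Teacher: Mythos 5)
Your proof is correct, and it is essentially the standard argument: the paper itself gives no proof of this statement (it is imported verbatim as \cite[Theorem 4.3]{Gleich2014Multilinear}), and the proof in that cited source is exactly your contraction argument — the telescoping bound $\|\mathbf{x}^{\otimes(m-1)}-\mathbf{y}^{\otimes(m-1)}\|_1\le(m-1)\|\mathbf{x}-\mathbf{y}\|_1$ on the simplex, the operator bound $\|\boldsymbol{R}\mathbf{z}\|_1\le\|\mathbf{z}\|_1$, and Banach's fixed-point theorem. One small remark: for $F$ to map $\Delta$ into itself you need $\mathbf{v}$ stochastic (as in the paper's Definition of multilinear PageRank), not merely nonnegative as the theorem statement loosely says; with that reading, uniqueness is, as in the original, uniqueness among stochastic solutions.
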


From Theorem \ref{lem-2} and Theorem \ref{lem-2.5}, we see that the higher-order PageRank tensor always exists and is unique, while the multilinear PageRank vector is not. Recently, some theoretical results are established on the existence and uniqueness of the multilinear PageRank vector; see \cite{Li2017The, Wen2020Multilinear,Li2014A,Dongdong2019Relaxation} and the references therein.
However, the conditions for these results are often complicated and some of them are difficult to use in practice.

On the other hand, the approximation of the multilinear PageRank vector to the high-order PageRank tensor can be poor.
For instance, if $m=3$, the approximation ${\bf x}{\bf x}^{\boldsymbol{T}}$ obtained from multilinear PageRank is a rank-$1$ {\it symmetric} matrix, and it is a poor approximation to a two-order PageRank tensor $\mathcal{X}$ that is a {\it nonsymmetric} matrix.
Thus, it is interesting to solve the higher-order PageRank problem \eqref{2.2} directly, provided that both the computational overhead and storage requirements can be reduced significantly. In this work, we consider how to solve the higher-order PageRank problem efficiently.

\section{The truncated power method for higher-order PageRank}\label{TPM}
%Mathematically, as the higher-order PageRank problem is the stationary distribution of a high-order Markov chain, the power method is a nature choice \cite{WJ2009A}.
%However, the approximation obtained from the power method is dense, which is unfavorable as $n$ is large.
In order to reduce the heavy storage requirements of the higher-order PageRank tensor, Ding {\it et al.} proposed a truncated power method for the higher-order PageRank problem \cite{Ding2018Fast}.
In this method, the following systems were considered instead of \eqref{3.2}:
\begin{equation}\label{3.3}
\boldsymbol{X}(:,j)=\alpha \mathbf{P}_{j} \boldsymbol{X}(j,:)^{\boldsymbol{T}}+(1-\alpha) \mathbf{G}(:,j),\quad j=1,2,\ldots,n,
\end{equation}
where $\mathbf{G} \in \mathbb{R}^{n \times n}$ is a nonnegative matrix with $\|\mathbf{G}\|_{l_{1}}=1$. Notice that this model \eqref{3.3} is
a modification to the original higher-order PageRank model \eqref{3.2}, and they are not mathematically to each other.
%For instance, one can set $\mathbf{G}$ to be a matrix that shares the same sparsity as $\mathcal{P}$ \cite{Ding2018Fast}.
%In fact, however, we only have the matrices $\{\mathbf{Q}_{j}\}_{j=1}^n$ in practical application, we do not know the sparsity of $\mathcal{P}$.
%This strategy needs to store the {\it sparse} matrix $\mathbf{G}$, and whether it can approximate the solution of \eqref{3.2} well is also unknown.

The key idea of the truncated power method is to approximate the stationary distribution matrix $\boldsymbol{X}$ by using $\alpha (\boldsymbol{S} + \mathbf{e}\boldsymbol{u}^{\boldsymbol{T}}) + (1-\alpha) \mathbf{G}$, where $\boldsymbol{S}$ is expected to be a {\it sparse} matrix containing those significantly large entries in $\boldsymbol X$, and $\boldsymbol{u}$ is a vector consisting of the background values for each column of $\boldsymbol{X}$.
Thus, the approximation obtained from the truncated power method is composed of a sparse matrix $\boldsymbol{S}$ and a vector $\boldsymbol{u}$ of dimension $n$, rather than an $n$-by-$n$ {\it dense} matrix.
Consequently, the storage requirements can be released significantly. 

More precisely, let $\mathbf{A} \in \mathbb{R}_{+}^{n \times n}$ and $\mathbf{b} \in \mathbb{R}_{+}^{n}$, i.e., $\mathbf{A}$ and $\mathbf{b}$ are nonnegative matrix and vector, respectively. The following operators were introduced \cite{Ding2018Fast}
\begin{itemize}[leftmargin=3em]
\item $\mathscr{P}: \mathbb{R}^{n \times n} \mapsto \mathbb{R}^{n \times n}$, $\mathscr{P}(\mathbf{A})(:,j):= \mathbf{P}_{j}\mathbf{A}(j,:)^{\boldsymbol{T}}$,
\item $\mathscr{T}: \mathbb{R}_{+}^{n} \mapsto \mathbb{R}_{+}^{n}$, $\mathscr{T}(\mathbf{b}):= \mathbf{s}^{*} + \mathbf{e} \mu^{*}$,
\item $\tilde{\mathscr{T}}: \mathbb{R}_{+}^{n \times n} \mapsto \mathbb{R}_{+}^{n \times n}$, $\tilde{\mathscr{T}}(\mathbf{A}):= [\mathscr{T}(\mathbf{A}(:,1)),\mathscr{T}(\mathbf{A}(:,2)),\ldots,\mathscr{T}(\mathbf{A}(:,n))]$,
\item $\mathscr{Q}: \mathbb{R}_{+}^{n \times n} \mapsto \mathbb{R}_{+}^{n \times n}$, $\mathscr{Q}(\mathbf{A}):= \alpha \tilde{\mathscr{T}}(\mathscr{P}(\mathbf{A})) + (1-\alpha)\mathbf{G}$.
\end{itemize}

The operator $\mathscr{T}$ is associated with the thresholding method for solving the following subproblem \cite[Algorithm 1]{Ding2018Fast}
\begin{equation}\label{27}
\begin{array}{l}
(\mathbf{s}^{*},  \mu^{*}) =  \mathop{\arg\min} \{\frac{1}{2}\|\mathbf{s}+\mathbf{e} \mu-\mathbf{b}\|_{2}^{2}+\beta\|\mathbf{s}\|_{1}\} \\
\ \ \ \ \ \ \ \ \ \ \ \ \ \text { s.t. } \mathbf{s} \geq 0, \mu \geq 0,\\
%\ \ \ \ \ \ \mathbf{e}^{\boldsymbol{T}}\mathbf{s} + n\mu = \mathbf{e}^{\boldsymbol{T}}\mathbf{b}
\end{array}
\end{equation}
where $\beta$ is a positive number and $\beta\|\mathbf{s}\|_{1}$ serves as an penalty term for sparsity. It was shown that the above optimization problem has a unique solution with explicit form \cite[Theorem 3.3]{Ding2018Fast}. An algorithm for solving this problem is given as follows.

\begin{algorithm}[H]\label{Alg10}
\caption{The thresholding method for solving the subproblem \eqref{27} \cite{Ding2018Fast}}
\SetKwInOut{Input}{Input}\SetKwInOut{Output}{Output}
\Input{The non-negative vector $\mathbf{b}$ and the parameter $\beta$;}
\Output{The sparse vector $\mathbf{s}$ (the non-zero components) and the background value $\mu$;}
\BlankLine
Sort the components of $\mathbf{b}$ into $b_{i_{1}} \geq \cdots \geq b_{i_{r}} > \beta \geq b_{i_{r+1}}\geq\cdots\geq b_{i_{n}}$\;
Find $d \in \{0,1, \ldots, r\}$ such that $(n-d) b_{i_{d}}>\sum\limits_{j=d+1}^{n} b_{i_{j}}+ n \beta \geq(n-d) b_{i_{d+1}}$ ($b_{i_{0}}:=+\infty$)\;
$\mu=\frac{1}{n-d} \sum\limits_{j=d+1}^{n} b_{i_{j}}+\frac{k}{n-d} \cdot \beta$\;
$s_{i_{j}}=\left\{\begin{array}{ll}b_{i_{j}}-\alpha-\mu, & j=1,2, \ldots, d, \\ 0, & j=d+1, d+2, \ldots, n ;\end{array}\right.$\;
{\bf return} $\mathbf{s}$, $\mu$\;
\end{algorithm}

Consider the following sequence
\begin{equation}\label{eq3.3}
\boldsymbol{X}^{(q)} = \alpha (\boldsymbol{S}^{(q)} + \mathbf{e}(\boldsymbol{u}^{(q)})^{\boldsymbol{T}}) + (1-\alpha)\mathbf{G}, \ \ q = 0,1,\ldots,
\end{equation}
%where $\boldsymbol{S}^{(q)} \in \mathbb{R}^{n \times n}_{+}$, $\boldsymbol{u}^{(q)} \in \mathbb{R}^{n}_{+}$ and $\|\boldsymbol{S}^{(q)} + \mathbf{e}{\boldsymbol{u}^{(q)}}^{\boldsymbol{T}}\|_{l_{1}} = 1$.
then the iterative scheme for the truncated power method can be rewritten as
$$
\boldsymbol{X}^{(q+1)} = \mathscr{Q}(\boldsymbol{X}^{(q)}), \quad q = 0,1,\ldots
$$
Notice that for $\forall q \in \mathbb{N}$, we have $\boldsymbol{S}^{(q)} \in \mathbb{R}^{n \times n}_{+}$, $\boldsymbol{u}^{(q)} \in \mathbb{R}^{n}_{+}$ and $\|\boldsymbol{S}^{(q)} + \mathbf{e}{\boldsymbol{u}^{(q)}}^{\boldsymbol{T}}\|_{l_{1}} = 1$.
If we denote by $\boldsymbol{Y}^{(q+1)} = \mathscr{P}(\boldsymbol{X}^{(q)})$, then
$$
\boldsymbol{X}^{(q+1)} = \alpha \tilde{\mathscr{T}}(\boldsymbol{Y}^{(q+1)}) + (1-\alpha)\mathbf{G}, \quad q = 0,1,\ldots
$$
The truncated power method is described as follows, for more details, refer to \cite{Ding2018Fast}.

\begin{algorithm}[H]\label{Alg2}
\caption{A truncated power method for higher-order PageRank \cite{Ding2018Fast}}
\SetKwInOut{Input}{Input}\SetKwInOut{Output}{Output}
\Input{$\mathbf{Q}_{j}$, $\mathbf{d}_{j}$, $j=1,2,\ldots, n$, $\boldsymbol{S}^{(0)}$, $\boldsymbol{u}^{(0)}$, $\mathbf{G}$, $\alpha$, $\beta$, $tol$;}
\Output{$\boldsymbol{S}$, $\boldsymbol{u}$;}
\BlankLine
{\bf Let $q=0$ and $res=1$}\;
\While {$res > tol$}
{
$res = 0$\;
\For{$j=1,2,\ldots,n$}
{
$\boldsymbol{Y}^{(q+1)}(:,j) = \mathbf{P}_{j} (\alpha(\boldsymbol{S}^{(q)}(j,:)^{\boldsymbol{T}} + \boldsymbol{u}^{(q)}) +(1-\alpha)\mathbf{G}(j,:)^{\boldsymbol{T}})$\footnote{We mention that there is a typo in Algorithm 2 of \cite{Ding2018Fast}, where $\mathbf{G}(:,j)$ should be replaced with $\mathbf{G}(j,:)^{\boldsymbol{T}}$; see \eqref{eq3.3}.};
$[\boldsymbol{S}^{(q+1)}(:,j), \boldsymbol{u}^{(q+1)}(j)] = thresholding(\boldsymbol{Y}^{(q+1)}(:,j),\beta)$;\quad\%~{\tt sparsing}\

$res = res + \|(\boldsymbol{S}^{(q+1)}(:,j) - \boldsymbol{S}^{(q)}(:,j)) + (\boldsymbol{u}^{(q+1)}(:,j) - \boldsymbol{u}^{(q)}(:,j))\|_{1}$\;
}
$q = q + 1$\;
}
$\boldsymbol{S} = \boldsymbol{S}^{(q)}$, $\boldsymbol{u} = \boldsymbol{u}^{(q)}$\;
\end{algorithm}
%\begin{remark}
%{\bf 1. Say more on thresholding, and give the more details on this algorithm!!! 2. Difference between $\tilde{\boldsymbol{X}}$ and ${\boldsymbol{X}}$.}
% Denote by $\mathbf{z} = \alpha(\boldsymbol{S}^{(q)}(j,:)^{\boldsymbol{T}} + \boldsymbol{u}^{(q)}) +(1-\alpha)\mathbf{G}(j,:)^{\boldsymbol{T}}$.
%In Step 5 of Algorithm \ref{Alg2}, the matrix-vector multiplication $\mathbf{P}_{j} \mathbf{z}$ can be performed via
%$
%\mathbf{Q}_{j}\mathbf{z}+ \mathbf{N}_{j} \mathbf{z},
%$
%where $\mathbf{N}_{j}$ contains the entries arising from the set of dangling states \cite{Ding2018Fast}.
%However, one has to form and store the vectors $\mathbf{d}_{j},j=1,\ldots,n$, which is unfavorable as $n$ is large.
%\end{remark}
%But the truncated power method mainly has the following two drawbacks:
%\begin{enumerate}[leftmargin=3em]
%\begin{remark}
%
%\end{remark}
%\end{enumerate}

\section{Improving the truncated power method for high-order PageRank problem}\label{rt}
Denote by $\mathbf{z} = \alpha(\boldsymbol{S}^{(q)}(j,:)^{\boldsymbol{T}} + \boldsymbol{u}^{(q)}) +(1-\alpha)\mathbf{G}(j,:)^{\boldsymbol{T}}$.
In Step 5 of Algorithm \ref{Alg2}, the matrix-vector multiplication $\mathbf{P}_{j} \mathbf{z}$ can be computed by
$\mathbf{Q}_{j}\mathbf{z}+ \mathbf{N}_{j} \mathbf{z}$ \cite{Ding2018Fast},
where $\mathbf{N}_{j}$ contains the entries arising from the set of dangling states. More precisely, let $\mathbf{d}_{j},j=1,2,\ldots,n$, be the vectors from the dangling states, we have that
\begin{equation}\label{eqn4.1}
\boldsymbol{Y}^{(q+1)}(:,j) = \mathbf{Q}_{j}\mathbf{z} + \frac{1}{n}(\mathbf{d}_{j}^{\boldsymbol{T}} \mathbf{z}) \mathbf{e}.
\end{equation}
Therefore, one has to form and store the vectors $\{\mathbf{d}_{j}\}_{j=1}^{n}$ explicitly,
which is prohibitive as $n$ is large. In this section, we propose two new strategies to improve the performance of the truncated power method.

\subsection{A variation on the truncated power method}
Recall that \eqref{3.3} and \eqref{3.2} are not mathematically equivalent, and the choice of $\mathbf{G}$ is very important.
Indeed, we can get different approximate solutions with different choices of $\mathbf{G}$.
In \cite{Ding2018Fast}, the matrix $\mathbf{G}$ is set to be a matrix that shares the same sparsity as $\mathcal{P}$.
Obviously, this option is non-optimal, and we have to store an auxiliary matrix in practice.
Thus, it is interested in using an appropriate matrix $\mathbf{G}$.
In this section, we propose to use
\begin{equation}\label{eqn42}
\mathbf{G} = \frac{1}{n}\mathbf{v}\mathbf{e}^{\boldsymbol{T}}.%=\frac{1}{n^2}\mathbf{e}\mathbf{e}^{\boldsymbol{T}}
\end{equation}
%where $\mathbf{v} = \frac{\mathbf{e}}{n}$.
An advantage is that there is no need to store an auxiliary matrix in the truncated power method any more.

Now we consider how to compute $\boldsymbol{Y}^{(q+1)}(:,j)$ without forming and storing $\mathbf{d}_{j}$ explicitly.
For any matrix $\mathbf{A} \in \mathbb{R}_{+}^{n \times n}$, we define the operators
\begin{itemize}[leftmargin=3em]
\item $\mathscr{G}: \mathbb{R}^{n \times n} \mapsto \mathbb{R}^{n \times n}$, $\mathscr{G}(\mathbf{A})(:,j):= \alpha \mathscr{P}(\mathbf{A})(:,j) + (1-\alpha)\mathbf{G}(:,j)$,
\item ${\mathscr{H}}: \mathbb{R}_{+}^{n \times n} \mapsto \mathbb{R}_{+}^{n \times n}$, ${\mathscr{H}}(\mathbf{A}):= \tilde{\mathscr{T}}(\mathscr{G}(\mathbf{A}))$,
\end{itemize}
and consider the sequence
\begin{equation*}
\tilde{\boldsymbol{X}}^{(q)} = \tilde{\boldsymbol{S}}^{(q)} + \mathbf{e} (\tilde{\boldsymbol{u}}^{(q)})^{\boldsymbol{T}},  \ \ q = 0,1,\ldots
\end{equation*}
%where $\boldsymbol{S}^{(q)} \in \mathbb{R}^{n \times n}_{+}$, $\boldsymbol{u}^{(q)} \in \mathbb{R}^{n}_{+}$ and $\|\boldsymbol{S}^{(q)} + \mathbf{e}{\boldsymbol{u}^{(q)}}^{\boldsymbol{T}}\|_{l_{1}} = 1$.
Then the iterative scheme of the improved truncated power method can be written as
\begin{equation*}
\tilde{\boldsymbol{X}}^{(q+1)} = \mathscr{H}(\tilde{\boldsymbol{X}}^{(q)}),  \ \ q = 0,1,\ldots
\end{equation*}

Let $\tilde{\boldsymbol{Y}}^{(q+1)} = \mathscr{G}(\tilde{\boldsymbol{X}}^{(q)})$, from \eqref{3.3} and $\mathbf{P}_{j} = \mathbf{Q}_{j} + \frac{1}{n} \mathbf{e} \mathbf{d}_{j}^{\boldsymbol{T}}$, we obtain
\begin{equation}\label{5.3}
\tilde{\boldsymbol{Y}}^{(q+1)}(:,j) = \alpha (\mathbf{Q}_{j} \tilde{\boldsymbol{X}}^{(q)}(j,:)^{\boldsymbol{T}}) + \frac{\alpha}{n}(\mathbf{d}_{j}^{\boldsymbol{T}}\tilde{\boldsymbol{X}}^{(q)}(j,:)^{\boldsymbol{T}})\mathbf{e} + (1-\alpha)\mathbf{G}(:,j).
\end{equation}
Multiplying $\mathbf{e}^{\boldsymbol{T}}$ on both sides of \eqref{5.3} yields
\begin{equation*}
\|\tilde{\boldsymbol{Y}}^{(q+1)}(:,j)\|_{1} = \alpha\|\mathbf{Q}_{j} \tilde{\boldsymbol{X}}^{(q)}(j,:)^{\boldsymbol{T}}\|_{1} + \alpha (\mathbf{d}_{j}^{\boldsymbol{T}}\tilde{\boldsymbol{X}}^{(q)}(j,:)^{\boldsymbol{T}}) + (1-\alpha) \|\mathbf{G}(:,j)\|_{1},
\end{equation*}
and thus
\begin{equation}\label{5.4}
\mathbf{d}_{j}^{\boldsymbol{T}}\tilde{\boldsymbol{X}}^{(q)}(j,:)^{\boldsymbol{T}} = \frac{\|\tilde{\boldsymbol{Y}}^{(q+1)}(:,j)\|_{1} - \alpha \|\mathbf{Q}_{j} \tilde{\boldsymbol{X}}^{(q)}(j,:)^{\boldsymbol{T}}\|_{1} - (1-\alpha)\|\mathbf{G}(:,j)\|_{1}}{\alpha}.
\end{equation}
Moreover, we have from $\tilde{\boldsymbol{Y}}^{(q+1)}(:,j)= \alpha \mathbf{P}_{j}\tilde{\boldsymbol{X}}^{(q)}(j,:)^{\boldsymbol{T}} + (1-\alpha)\mathbf{G}(:,j)$ that
\begin{equation}\label{5.5}
\|\tilde{\boldsymbol{Y}}^{(q+1)}(:,j)\|_{1} = \alpha \|\tilde{\boldsymbol{X}}^{(q)}(j,:)^{\boldsymbol{T}}\|_{1} + (1-\alpha)\|\mathbf{G}(:,j)\|_{1}.
\end{equation}
Substituting \eqref{5.5} into \eqref{5.4} gives
\begin{equation}\label{5.6}
\mathbf{d}_{j}^{\boldsymbol{T}}\tilde{\boldsymbol{X}}^{(q)}(j,:)^{\boldsymbol{T}} = \|\tilde{\boldsymbol{X}}^{(q)}(j,:)^{\boldsymbol{T}}\|_{1} - \|\mathbf{Q}_{j} \tilde{\boldsymbol{X}}^{(q)}(j,:)^{\boldsymbol{T}}\|_{1}.
\end{equation}
So we have from \eqref{5.3} that
%\begin{equation}\label{eqn4.7}
%\tilde{\boldsymbol{Y}}^{(q+1)}(:,j) = \alpha (\mathbf{Q}_{j} \tilde{\boldsymbol{X}}^{(q)}(j,:)^{\boldsymbol{T}}) + \frac{\alpha}{n}(\|\boldsymbol{X}^{(q)}(j,:)^{\boldsymbol{T}}\|_{1} - \|\mathbf{Q}_{j} \tilde{\boldsymbol{X}}^{(q)}(j,:)^{\boldsymbol{T}}\|_{1})\mathbf{e} + (1-\alpha)\mathbf{G}(:,j).
%\end{equation}
\begin{align}\label{eqn4.7}
& \tilde{\boldsymbol{Y}}^{(q+1)}(:,j)  \nonumber \\
& =  \alpha (\mathbf{Q}_{j} \tilde{\boldsymbol{X}}^{(q)}(j,:)^{\boldsymbol{T}}) + \frac{\alpha}{n}(\|\boldsymbol{X}^{(q)}(j,:)^{\boldsymbol{T}}\|_{1} - \|\mathbf{Q}_{j} \tilde{\boldsymbol{X}}^{(q)}(j,:)^{\boldsymbol{T}}\|_{1})\mathbf{e} + (1-\alpha)\mathbf{G}(:,j) \nonumber \\
& = \alpha (\mathbf{Q}_{j} \tilde{\boldsymbol{X}}^{(q)}(j,:)^{\boldsymbol{T}}) + \frac{\alpha}{n}(\|\boldsymbol{X}^{(q)}(j,:)^{\boldsymbol{T}}\|_{1} - \|\mathbf{Q}_{j} \tilde{\boldsymbol{X}}^{(q)}(j,:)^{\boldsymbol{T}}\|_{1})\mathbf{e} + (1-\alpha)\mathbf{v}.
\end{align}

In summary, we have the following algorithm. The differences with Algorithm \ref{Alg2} are that we replace \eqref{eqn4.1} with \eqref{eqn4.7}, and there is no need to form and store the vectors $\mathbf{d}_{j},j=1,2,\ldots,n$, moreover, it is unnecessary to store an additional matrix $\mathbf{G}$; see Step 6 in Algorithm \ref{Alg3}.

\begin{algorithm}[H]
\caption{A variation on the truncated power method for higher-order PageRank}\label{Alg3}
\SetKwInOut{Input}{Input}\SetKwInOut{Output}{Output}
\Input{$\mathbf{Q}_{j}$, $j=1,2,\ldots, n$, $\tilde{\boldsymbol{S}}^{(0)}$, $\tilde{\boldsymbol{u}}^{(0)}$, $\mathbf{v}$, $\alpha$, $\beta$, $tol$;}
\Output{$\tilde{\boldsymbol{S}}$, $\tilde{\boldsymbol{u}}$;}
\BlankLine
{\bf Let $q=0$ and $res=1$}\;
\While {$res > tol$}
{
$res = 0$\;
\For{$j=1,2,\ldots,n$}
{
$\mathbf{y} = \tilde{\boldsymbol{S}}^{(q)}(j,:)^{\boldsymbol{T}} + \tilde{\boldsymbol{u}}^{(q)}$\;
%\tcp{$\mathbf{y}$ represents $\tilde{\boldsymbol{X}}^{(q)}(j,:)^{\boldsymbol{T}}$}
$\tilde{\boldsymbol{Y}}^{(q+1)}(:,j) = \alpha (\mathbf{Q}_{j} \mathbf{y}) + \frac{\alpha}{n}(\|\mathbf{y}\|_{1} - \|\mathbf{Q}_{j} \mathbf{y}\|_{1})\mathbf{e} + \frac{1-\alpha}{n}\mathbf{v}$\;
$[\tilde{\boldsymbol{S}}^{(q+1)}(:,j), \tilde{\boldsymbol{u}}^{(q+1)}(j)] = thresholding(\tilde{\boldsymbol{Y}}^{(q+1)}(:,j),\beta)$\quad\%~{\tt sparsing}\

$res = res + \|(\tilde{\boldsymbol{S}}^{(q+1)}(:,j) - \tilde{\boldsymbol{S}}^{(q)}(:,j)) + (\tilde{\boldsymbol{u}}^{(q+1)}(:,j) - \tilde{\boldsymbol{u}}^{(q)}(:,j))\|_{1}$\;
}
$q = q + 1$\;
}
$\tilde{\boldsymbol{S}} = \tilde{\boldsymbol{S}}^{(q)}$, $\tilde{\boldsymbol{u}} = \tilde{\boldsymbol{u}}^{(q)}$\;
\end{algorithm}

%\begin{remark}
%In Step 5 of Algorithm \ref{Alg2}, if we obtain ${\boldsymbol{Y}^{(q+1)}(:,j)}$ by computing $\mathbf{P}_{j} (\alpha(\boldsymbol{S}^{(q)}(j,:)^{\boldsymbol{T}} + \boldsymbol{u}^{(q)}))$ and $\mathbf{P}_{j}((1-\alpha)\mathbf{G}(j,:)^{\boldsymbol{T}})$, then computing $\mathbf{P}_{j}((1-\alpha)\mathbf{G}(j,:)^{\boldsymbol{T}})$ will increase the workload.
%However, in Step 6 of Algorithm \ref{Alg3}, we can acquire $\tilde{\boldsymbol{Y}}^{(q+1)}(:,j)$ only by computing $\mathbf{P}_{j} (\tilde{\boldsymbol{S}}^{(q)}(j,:)^{\boldsymbol{T}} + \tilde{\boldsymbol{u}}^{(q)})$ and $\frac{1-\alpha}{n}\mathbf{v}$, which will decrease the workload.
%This is why we change the iterative scheme of the truncated power method.
%\end{remark}

%\begin{remark}
%\begin{enumerate}[leftmargin=3em]
%\item Here, we take $\mathbf{G} = \frac{1}{n}\mathbf{v}\mathbf{e}^{\boldsymbol{T}}$. On one hand, we only store $\mathbf{v}$ without storing $\mathbf{G}$. Even if we take $\mathbf{v} = \frac{\mathbf{e}}{n}$, then we will not store $\mathbf{v}$. On the other hand, it also can improve the accuracy of the solution, which will be shown in synthetic data experiments in the section \ref{AD}.
%\item Like the improved power method, we can also use the information of $\mathbf{d}_{j}$ implicitly without storing $\mathbf{d}_{j}$, which not only saves storage but also greatly improves the efficiency of the algorithm in hard case.
%\end{enumerate}
%\end{remark}

%\subsubsection{The convergence of Algorithm 4}
Next, we consider the convergence of Algorithm \ref{Alg3}.
We first need the following lemma.
\begin{lemma}\cite[Theorem 3.4]{Ding2018Fast}\label{lem-5.1}
The operator $\mathscr{T} : \mathbb{R}_{+}^{n} \longmapsto \mathbb{R}_{+}^{n}$ is non-expansive, i.e., $\|\mathscr{T}(\mathbf{a}) - \mathscr{T}({\mathbf{b}})\|_{1} \leq \|\mathbf{a} - \mathbf{b}\|_{1}$.
\end{lemma}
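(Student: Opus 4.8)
The plan is to avoid the combinatorial case analysis that a direct attack on the explicit thresholding formula of Algorithm \ref{Alg10} would require, and instead deduce non-expansiveness from two structural properties of $\mathscr{T}$: that it preserves the $\ell_1$ norm on $\mathbb{R}_{+}^{n}$, and that it is monotone (order-preserving). Once these are in hand, a short lattice argument delivers the stated inequality for arbitrary inputs, and crucially lets the active set differ between the two arguments without any bookkeeping.

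First I would record the closed form of $\mathscr{T}$ coming from the optimality conditions of \eqref{27}. Writing $\mathscr{T}(\mathbf{b}) = \mathbf{s}^{*} + \mathbf{e}\mu^{*}$, the KKT conditions (equivalently, the output of Algorithm \ref{Alg10}) give the componentwise description $[\mathscr{T}(\mathbf{b})]_i = \max\{\mu(\mathbf{b}),\, b_i - \beta\}$, where the background value $\mu(\mathbf{b}) \geq 0$ is the root of $\phi_{\mathbf{b}}(\mu) := \sum_i \max\{\mu,\, b_i - \beta\} - \sum_i b_i$. From the explicit expression for $\mu$ produced by Algorithm \ref{Alg10} (or directly from $\phi_{\mathbf{b}}(\mu(\mathbf{b})) = 0$) one reads off the sum-preservation identity $\mathbf{e}^{\boldsymbol{T}}\mathscr{T}(\mathbf{b}) = \mathbf{e}^{\boldsymbol{T}}\mathbf{b}$ for every nonzero $\mathbf{b} \in \mathbb{R}_{+}^{n}$; the degenerate case $\mathbf{b} = 0$ is trivial since then $\mathscr{T}(\mathbf{b}) = 0$.

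Second, I would prove monotonicity: $\mathbf{a} \leq \mathbf{b}$ componentwise implies $\mathscr{T}(\mathbf{a}) \leq \mathscr{T}(\mathbf{b})$. The only nontrivial point is monotonicity of $\mu(\cdot)$. For this, observe that $t \mapsto \max\{\mu,\, t-\beta\} - t$ is non-increasing, so $\mathbf{a} \leq \mathbf{b}$ forces $\phi_{\mathbf{a}}(\mu) \geq \phi_{\mathbf{b}}(\mu)$ for every $\mu$. Evaluating at $\mu(\mathbf{a})$ gives $\phi_{\mathbf{b}}(\mu(\mathbf{a})) \leq \phi_{\mathbf{a}}(\mu(\mathbf{a})) = 0 = \phi_{\mathbf{b}}(\mu(\mathbf{b}))$, and since $\phi_{\mathbf{b}}$ is non-decreasing this yields $\mu(\mathbf{a}) \leq \mu(\mathbf{b})$. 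Together with $a_i \leq b_i$ this gives $\max\{\mu(\mathbf{a}), a_i - \beta\} \leq \max\{\mu(\mathbf{b}), b_i - \beta\}$ in each coordinate, which is the claimed monotonicity.

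Finally I would combine the two properties by a lattice argument. Given $\mathbf{a}, \mathbf{b} \in \mathbb{R}_{+}^{n}$, set $\mathbf{c} = \mathbf{a} \vee \mathbf{b}$ and $\mathbf{d} = \mathbf{a} \wedge \mathbf{b}$ (componentwise max and min), which again lie in $\mathbb{R}_{+}^{n}$. Monotonicity places both $\mathscr{T}(\mathbf{a})_i$ and $\mathscr{T}(\mathbf{b})_i$ in the interval $[\mathscr{T}(\mathbf{d})_i,\, \mathscr{T}(\mathbf{c})_i]$, so $|\mathscr{T}(\mathbf{a})_i - \mathscr{T}(\mathbf{b})_i| \leq \mathscr{T}(\mathbf{c})_i - \mathscr{T}(\mathbf{d})_i$. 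Summing over $i$ and applying sum-preservation twice gives $\|\mathscr{T}(\mathbf{a}) - \mathscr{T}(\mathbf{b})\|_{1} \leq \mathbf{e}^{\boldsymbol{T}}\mathbf{c} - \mathbf{e}^{\boldsymbol{T}}\mathbf{d} = \sum_i (c_i - d_i) = \|\mathbf{a} - \mathbf{b}\|_{1}$, as desired. I expect the main obstacle to be the rigorous justification of the explicit form and of the uniqueness and monotonicity of the background value $\mu(\mathbf{b})$ at the kinks of the piecewise-linear map $\phi_{\mathbf{b}}$; the sorting-based selection of the active set in Algorithm \ref{Alg10} is exactly what pins $\mu(\mathbf{b})$ down, and the lattice argument is arranged so that the active sets of $\mathbf{a}$ and $\mathbf{b}$ never have to be compared directly.
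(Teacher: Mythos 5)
Your proof is correct, but note that this paper never actually proves Lemma \ref{lem-5.1}: the statement is imported verbatim from \cite{Ding2018Fast} (their Theorem 3.4), and the only argument supplied here is for the matrix version, Lemma \ref{lem-5.2}, which merely sums the vector inequality over columns. So there is no in-paper proof to compare against, and your attempt stands as an independent proof of the cited result. Checking it: the closed form $[\mathscr{T}(\mathbf{b})]_i=\max\{\mu(\mathbf{b}),\,b_i-\beta\}$ is exactly what partial minimization of \eqref{27} yields (for fixed $\mu$ the optimal $s_i$ is $\max\{0,\,b_i-\mu-\beta\}$), the stationarity condition in $\mu$ is precisely $\phi_{\mathbf{b}}(\mu)=0$, and since $\phi_{\mathbf{b}}(0)\leq 0$ for every $\mathbf{b}\in\mathbb{R}_{+}^{n}$ the constraint $\mu\geq 0$ never becomes active, so sum preservation $\mathbf{e}^{\boldsymbol{T}}\mathscr{T}(\mathbf{b})=\mathbf{e}^{\boldsymbol{T}}\mathbf{b}$ holds; one can also read it off Algorithm \ref{Alg10} directly, where (after correcting its typos $k\mapsto d$ and $\alpha\mapsto\beta$) one has $\sum_{j\leq d}(b_{i_j}-\beta-\mu)+n\mu=\sum_{j}b_{i_j}$. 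The one step that genuinely needs the uniqueness you flag is the inference $\mu(\mathbf{a})\leq\mu(\mathbf{b})$ from $\phi_{\mathbf{b}}(\mu(\mathbf{a}))\leq 0=\phi_{\mathbf{b}}(\mu(\mathbf{b}))$, since a non-decreasing function may in principle have a flat zero set; but uniqueness is immediate here, because on any flat segment of the piecewise-linear $\phi_{\mathbf{b}}$ the slope $|\{i : b_i-\beta\leq\mu\}|$ vanishes, forcing the value $-n\beta<0$ there, so the zero of $\phi_{\mathbf{b}}$ is a single point and monotonicity of $\mu(\cdot)$, hence of $\mathscr{T}$, follows. Your final lattice step is then exact: with $\mathbf{c}=\mathbf{a}\vee\mathbf{b}$ and $\mathbf{d}=\mathbf{a}\wedge\mathbf{b}$, monotonicity traps $\mathscr{T}(\mathbf{a})_i$ and $\mathscr{T}(\mathbf{b})_i$ in $[\mathscr{T}(\mathbf{d})_i,\mathscr{T}(\mathbf{c})_i]$, and sum preservation converts $\mathbf{e}^{\boldsymbol{T}}(\mathscr{T}(\mathbf{c})-\mathscr{T}(\mathbf{d}))$ into $\|\mathbf{a}-\mathbf{b}\|_{1}$. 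This is the classical Crandall--Tartar argument (order-preserving plus mass-preserving implies $l_{1}$-nonexpansive); compared with a direct attack on the thresholding formula, which is what one would expect in \cite{Ding2018Fast}, it avoids any comparison of the active sets of $\mathbf{a}$ and $\mathbf{b}$, at the cost of the preliminary work on $\mu(\cdot)$ that you correctly identified as the only delicate point.
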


The following result  was mentioned in \cite[pp.77]{Ding2018Fast} without proof.
We give a proof here.
\begin{lemma}\label{lem-5.2}
The operator $\tilde{\mathscr{T}} : \mathbb{R}_{+}^{n \times n} \longmapsto \mathbb{R}_{+}^{n \times n}$ is non-expansive, i.e. $\|\tilde{\mathscr{T}}(\mathbf{A}) - \tilde{\mathscr{T}}(\mathbf{B})\|_{l_{1}} \leq \|\mathbf{A} - \mathbf{B}\|_{l_{1}}$.
\end{lemma}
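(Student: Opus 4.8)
The plan is to reduce the matrix-level statement to the column-level statement of Lemma \ref{lem-5.1} by exploiting the fact that $\tilde{\mathscr{T}}$ acts independently on each column together with the additive structure of the $l_1$ norm over columns. The first thing I would do is record the key observation that the matrix norm $\|\cdot\|_{l_{1}}$ is nothing but the sum of the $l_1$ norms of the columns, i.e. for any $\mathbf{M} \in \mathbb{R}^{n \times n}$,
\begin{equation*}
\|\mathbf{M}\|_{l_{1}} = \sum_{j=1}^{n} \|\mathbf{M}(:,j)\|_{1}.
\end{equation*}
This is consistent with the normalization $\|\boldsymbol{S}^{(q)} + \mathbf{e}(\boldsymbol{u}^{(q)})^{\boldsymbol{T}}\|_{l_{1}} = 1$ used throughout, and it is the bridge that lets column-wise estimates assemble into a matrix-wise estimate.

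Next I would invoke the definition $\tilde{\mathscr{T}}(\mathbf{A}) = [\mathscr{T}(\mathbf{A}(:,1)), \ldots, \mathscr{T}(\mathbf{A}(:,n))]$ to identify the $j$-th column of the difference: $\bigl(\tilde{\mathscr{T}}(\mathbf{A}) - \tilde{\mathscr{T}}(\mathbf{B})\bigr)(:,j) = \mathscr{T}(\mathbf{A}(:,j)) - \mathscr{T}(\mathbf{B}(:,j))$. Combining this with the observation above gives the chain
\begin{align*}
\|\tilde{\mathscr{T}}(\mathbf{A}) - \tilde{\mathscr{T}}(\mathbf{B})\|_{l_{1}}
&= \sum_{j=1}^{n} \|\mathscr{T}(\mathbf{A}(:,j)) - \mathscr{T}(\mathbf{B}(:,j))\|_{1} \\
&\leq \sum_{j=1}^{n} \|\mathbf{A}(:,j) - \mathbf{B}(:,j)\|_{1}
= \|\mathbf{A} - \mathbf{B}\|_{l_{1}},
\end{align*}
where the inequality applies Lemma \ref{lem-5.1} to each column separately (legitimate since each column $\mathbf{A}(:,j), \mathbf{B}(:,j)$ lies in $\mathbb{R}_{+}^{n}$), and the final equality again uses the column decomposition of $\|\cdot\|_{l_{1}}$.

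There is no genuine obstacle here; the result is essentially a bookkeeping consequence of the two facts that $\tilde{\mathscr{T}}$ is block-diagonal across columns and that $\|\cdot\|_{l_{1}}$ is separable across columns. The only point requiring mild care is to confirm that the intended matrix norm is the entrywise $l_1$ norm (sum of absolute values of all entries), equivalently the sum of column $l_1$ norms, rather than the induced operator $1$-norm; the separability step fails for the induced norm, so I would state the column-sum characterization explicitly at the outset to make the decomposition airtight.
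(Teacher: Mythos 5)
Your proof is correct and follows essentially the same route as the paper's: decompose $\|\cdot\|_{l_{1}}$ as the sum of column $l_1$ norms, apply the column-wise non-expansiveness of $\mathscr{T}$ (Lemma \ref{lem-5.1}) to each column, and sum. Your explicit remark distinguishing the entrywise $l_1$ norm from the induced operator $1$-norm is a sensible clarification but does not change the argument, which matches the paper's proof step for step.
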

\begin{proof}
We have that
\begin{equation}\label{5.2.1}
\|\tilde{\mathscr{T}}(\mathbf{A}) - \tilde{\mathscr{T}}(\mathbf{B})\|_{l_{1}} = \sum_{j=1}^{n}\|[\mathscr{T}(\mathbf{A}(:,j)) - \mathscr{T}(\mathbf{B}(:,j))\|_{1}.
\end{equation}
According to Lemma \ref{lem-5.1},
\begin{equation}\label{5.2.2}
\|[\mathscr{T}(\mathbf{A}(:,j)) - \mathscr{T}(\mathbf{B}(:,j))\|_{1} \leq \|\mathbf{A}(:,j) - \mathbf{B}(:,j)\|_{1}.
\end{equation}
Combining \eqref{5.2.1} and \eqref{5.2.2}, we get
\begin{equation*}
\|\tilde{\mathscr{T}}(\mathbf{A}) - \tilde{\mathscr{T}}(\mathbf{B})\|_{l_{1}} \leq \sum_{j=1}^{n} \|\mathbf{A}(:,j) - \mathbf{B}(:,j)\|_{1}
= \|\mathbf{A} - \mathbf{B}\|_{l_{1}},
\end{equation*}
which completes the proof.
\end{proof}

The following theorem shows the convergence of Algorithm \ref{Alg3}.
\begin{theorem}\label{the-5.6}
When $\alpha \in (0,1)$, the iteration $\tilde{\boldsymbol{X}}_{q+1} = \mathscr{H}(\tilde{\boldsymbol{X}}_{q})$ converges to the unique fixed point in $\mathbb{R}_{+}^{n \times n}$ with an arbitrary initial point $\tilde{\boldsymbol{X}}_{0} \in \mathbb{R}_{+}^{n \times n}$.
\end{theorem}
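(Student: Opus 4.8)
The plan is to recognize $\mathscr{H}$ as a contraction on the complete metric space $(\mathbb{R}_{+}^{n\times n},\|\cdot\|_{l_{1}})$ and then invoke the Banach fixed-point theorem. First I would verify the two structural prerequisites. Since each $\mathbf{P}_{j}\geq 0$, each column $\mathbf{G}(:,j)\geq 0$, and $\tilde{\mathscr{T}}$ maps nonnegative matrices to nonnegative matrices by construction, the composite operator $\mathscr{H}=\tilde{\mathscr{T}}\circ\mathscr{G}$ sends $\mathbb{R}_{+}^{n\times n}$ into itself. Moreover, $\mathbb{R}_{+}^{n\times n}$ is a closed subset of the finite-dimensional (hence complete) normed space $\mathbb{R}^{n\times n}$, so it is itself a complete metric space under $\|\cdot\|_{l_{1}}$. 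The contraction mapping principle will therefore apply once the contraction estimate is established.

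The heart of the argument is the bound $\|\mathscr{H}(\mathbf{A})-\mathscr{H}(\mathbf{B})\|_{l_{1}}\leq\alpha\|\mathbf{A}-\mathbf{B}\|_{l_{1}}$, which I would assemble in three reductions. Using Lemma \ref{lem-5.2} (non-expansiveness of $\tilde{\mathscr{T}}$), one has
\[
\|\mathscr{H}(\mathbf{A})-\mathscr{H}(\mathbf{B})\|_{l_{1}}=\|\tilde{\mathscr{T}}(\mathscr{G}(\mathbf{A}))-\tilde{\mathscr{T}}(\mathscr{G}(\mathbf{B}))\|_{l_{1}}\leq\|\mathscr{G}(\mathbf{A})-\mathscr{G}(\mathbf{B})\|_{l_{1}}.
\]
Next, since the term $(1-\alpha)\mathbf{G}(:,j)$ does not depend on the argument, it cancels column by column, leaving $\mathscr{G}(\mathbf{A})(:,j)-\mathscr{G}(\mathbf{B})(:,j)=\alpha\big(\mathscr{P}(\mathbf{A})(:,j)-\mathscr{P}(\mathbf{B})(:,j)\big)$, so that $\|\mathscr{G}(\mathbf{A})-\mathscr{G}(\mathbf{B})\|_{l_{1}}=\alpha\|\mathscr{P}(\mathbf{A})-\mathscr{P}(\mathbf{B})\|_{l_{1}}$.

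For the last reduction I would use column stochasticity. Recall $\mathscr{P}(\mathbf{A})(:,j)=\mathbf{P}_{j}\mathbf{A}(j,:)^{\boldsymbol{T}}$. Writing $\mathbf{w}_{j}=\mathbf{A}(j,:)^{\boldsymbol{T}}-\mathbf{B}(j,:)^{\boldsymbol{T}}$ and using $\mathbf{e}^{\boldsymbol{T}}\mathbf{P}_{j}=\mathbf{e}^{\boldsymbol{T}}$, the triangle inequality gives $\|\mathbf{P}_{j}\mathbf{w}_{j}\|_{1}\leq\sum_{k}|(\mathbf{w}_{j})_{k}|\sum_{i}(\mathbf{P}_{j})_{ik}=\|\mathbf{w}_{j}\|_{1}$; summing over $j$ then yields $\|\mathscr{P}(\mathbf{A})-\mathscr{P}(\mathbf{B})\|_{l_{1}}\leq\sum_{j=1}^{n}\|\mathbf{A}(j,:)-\mathbf{B}(j,:)\|_{1}=\|\mathbf{A}-\mathbf{B}\|_{l_{1}}$. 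Chaining the three estimates produces the contraction factor $\alpha\in(0,1)$, and the Banach fixed-point theorem delivers existence and uniqueness of the fixed point in $\mathbb{R}_{+}^{n\times n}$ together with convergence of the iteration from any $\tilde{\boldsymbol{X}}_{0}\in\mathbb{R}_{+}^{n\times n}$.

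I do not expect a serious obstacle, as the estimate is essentially a composition of non-expansive maps damped by $\alpha$. The only point demanding care is the index bookkeeping in the $\mathscr{P}$ step: $\mathscr{P}$ reads the $j$-th \emph{row} of its argument but writes the $j$-th \emph{column} of the output. I would therefore check explicitly that summing the column $1$-norms of $\mathscr{P}(\mathbf{A})-\mathscr{P}(\mathbf{B})$ matches summing the row $1$-norms of $\mathbf{A}-\mathbf{B}$, both of which coincide with the entrywise $\|\cdot\|_{l_{1}}$ norm, so that the final inequality is dimensionally consistent.
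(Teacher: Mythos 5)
Your proposal is correct and follows essentially the same route as the paper's proof: non-expansiveness of $\tilde{\mathscr{T}}$ (Lemma \ref{lem-5.2}), cancellation of the constant $(1-\alpha)\mathbf{G}$ term to extract the contraction factor $\alpha$ via $\mathscr{P}$, and the Banach fixed-point theorem. The only difference is one of detail: you derive the bound $\|\mathscr{P}(\mathbf{A})-\mathscr{P}(\mathbf{B})\|_{l_{1}}\leq\|\mathbf{A}-\mathbf{B}\|_{l_{1}}$ explicitly from column stochasticity of the $\mathbf{P}_{j}$ (and verify the row-to-column bookkeeping and completeness of $\mathbb{R}_{+}^{n\times n}$), whereas the paper simply cites the fact that the $l_{1}$-norm of the linear operator $\mathscr{P}$ equals $1$ from \cite{Ding2018Fast}.
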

\begin{proof}
Recall that
$\mathscr{H}(\mathbf{A})= \tilde{\mathscr{T}}(\mathscr{G}(\mathbf{A}))$, so we have from Lemma \ref{lem-5.2} that
\begin{equation*}
\|\mathscr{H}(\mathbf{A}) - \mathscr{H}(\mathbf{B})\|_{l_{1}} = \|\tilde{\mathscr{T}}(\mathscr{G}(\mathbf{A})) - \tilde{\mathscr{T}}(\mathscr{G}(\mathbf{B}))\|_{l_{1}} \leq \|\mathscr{G}(\mathbf{A}) - \mathscr{G}(\mathbf{B})\|_{l_{1}}.
\end{equation*}
Moreover,
\begin{equation*}
\|\mathscr{G}(\mathbf{A}) - \mathscr{G}(\mathbf{B})\|_{l_{1}} = \alpha \|\mathscr{P}(\mathbf{A}) - \mathscr{P}(\mathbf{B})\|_{l_{1}} \leq \alpha \|\mathbf{A} - \mathbf{B}\|_{l_{l}},
\end{equation*}
where we used the fact that the $l_{1}$-norm of the linear operator $\mathscr{P}$ is $1$ \cite{Ding2018Fast}.
Thus,
$$
\|\mathscr{H}(\mathbf{A}) - \mathscr{H}(\mathbf{B})\|_{l_{1}} \leq \alpha \|\mathbf{A} - \mathbf{B}\|_{l_{l}}<\|\mathbf{A} - \mathbf{B}\|_{l_{l}},
$$
and the conclusion follows from the Banach fixed point theorem \cite{CiarletLNFA}.
\end{proof}
%\begin{remark}
%According to Theorem \ref{the-5.6}, we can know that Algorithm 3 converges to the unique $\boldsymbol{S}$ and $\boldsymbol{u}$ with the initial point $\boldsymbol{S}^{(0)} \in \mathbb{R}_{+}^{n \times n}$ and $\boldsymbol{u}^{(0)} \in \mathbb{R}_{+}^{n}$.
%\end{remark}

\subsection{A truncated power method with partial updating}
In each iteration of Algorithm \ref{Alg3}, we have to update all the columns $\tilde{\boldsymbol{S}}^{(q+1)}(:,j),j=1,2,\ldots,n$,
and the workload is prohibitive as $n$ is large.
In practice, however, we are often concerned with several webpages with {\it top} PageRank values.
To release the overhead, we propose to {\it partially update} the columns of $\tilde{\boldsymbol{S}}^{(q+1)}$.
In other words, we only update some ``important" columns of the approximation in each iteration.

Let us discuss it in more detail.
Denote by
\begin{equation}\label{eqn410}
PV^{(q)}(j) = \sum_{i=1}^{n}\boldsymbol{S}^{(q)}_{ij} + \boldsymbol{u}^{(q)}_{j},\quad  q = 0,1,\ldots
\end{equation}
the {\it PageRank value} of the $j$-th webpage in the $q$-th iteration \cite{Ding2018Fast}. In the partially updating strategy, we first run $\tilde{\ell}$ (say, 1 or 2) iterations of Algorithm \ref{Alg3}.
Suppose that we are interested in the top $\varsigma$ PageRank values, and
let $\tau$ be the percentage of columns extracted from the previous iteration.
Let $\Omega^{(q)}$ be the index set of columns that is required to update during iterations of the new algorithm.
Next we consider how to update this set efficiently.
\begin{enumerate}[leftmargin=3em]
\item During the first $\tilde{\ell}$ iterations, we have to update all the columns, and $\Omega^{(q)} = \{1, 2, \ldots, n\}$ for $q = 0, 1, \ldots, \tilde{\ell}-1$, where $\Omega^{(q)}$ represents the set of columns need to be updated in the $(q+1)$-th iteration.
\item From the $(\tilde{\ell} + 1)$-th iteration, we only need to update the columns with top $card(\Omega^{(q)})$ PageRank values in the $q$-th iteration, where
    \begin{equation}\label{eqn411}
    card(\Omega^{(q)}) = \max\{\lfloor \tau \cdot card(\Omega^{(q-1)}) \rfloor, \varsigma\}.
    \end{equation}
\end{enumerate}

Based on the above discussions, we have the following algorithm that partially update the columns of the approximate solutions during iterations. If $\tau = 100\%$, then Algorithm \ref{Alg5} reduces to Algorithm \ref{Alg3}.

\begin{algorithm}[H]
\caption{A truncated power method with partial updating for higher-order PageRank }\label{Alg5}
\SetKwInOut{Input}{Input}\SetKwInOut{Output}{Output}
\Input{$\mathbf{Q}_{j}$, $j=1,2,\ldots, n$, $\hat{\boldsymbol{S}}^{(0)}$, $\hat{\boldsymbol{u}}^{(0)}$, $\alpha$, $\beta$, $\varsigma$, $\tau$, $\tilde{\ell}$ and $tol$;}
\Output{$\hat{\boldsymbol{S}}$, $\hat{\boldsymbol{u}}$;}
\BlankLine
{\bf Let $q=0$ and $res=1$}\;
\While {$res > tol$}
{
$res = 0$\;
\eIf{ $q \leq \tilde{\ell}-1$ }
{$\Omega^{(q)}$ = \{1, 2, \ldots, $n$\}\;}
%{\For {$j \in \Omega^{(q-1)}$}
%{{\bf Compute $PV^{(q)}(j)$ by using \eqref{eqn410}}\;}
%{\bf Sort the elements in the set $\{ PV^{(q-1)}(j) | j \in  \Omega^{(q-1)} \}$ in descending order}\;
{{\bf Determine} $card(\Omega^{(q)})$ via \eqref{eqn411} and {\bf select the column indexes corresponding to the top $card(\Omega^{(q)})$ elements in $\{{PV^{(q)}(j)}\}_{ j \in \Omega^{(q-1)}}$ to form $\Omega^{(q)}$ }\quad\%~{\tt determine the index set}\;
}
\For{$j = \in \Omega^{(q)}$}
{
%$j = \Omega^{(q)}(\tilde{i})$\quad\%~{\tt $\Omega^{(q)}(\tilde{i})$ represents the $\tilde{i}$-th element of $\Omega^{(q)}$}\
$\mathbf{y} = \hat{\boldsymbol{S}}^{(q)}(j,:)^{\boldsymbol{T}} + \hat{\boldsymbol{u}}^{(q)}$\;
%\tcp{$\mathbf{y}$ represents $\hat{\boldsymbol{X}}^{(q)}(j,:)^{\boldsymbol{T}}$}
$\hat{\boldsymbol{Y}}^{(q+1)}(:,j) = \alpha (\mathbf{Q}_{j} \mathbf{y}) + \frac{\alpha}{n}(\|\mathbf{y}\|_{1} - \|\mathbf{Q}_{j} \mathbf{y}\|_{1})\mathbf{e}
+ \frac{1-\alpha}{n}\mathbf{v}$\;
$[\hat{\boldsymbol{S}}^{(q+1)}(:,j), \hat{\boldsymbol{u}}^{(q+1)}(j)] = thresholding(\hat{\boldsymbol{Y}}^{(q+1)}(:,j),\beta)$\quad\%~{\tt sparsing}\
{\bf Compute $PV^{(q+1)}(j)$ by using \eqref{eqn410}}\;
%\tcp{sparsing}
$res = res + \|(\hat{\boldsymbol{S}}^{(q+1)}(:,j) - \hat{\boldsymbol{S}}^{(q)}(:,j)) + (\hat{\boldsymbol{u}}^{(q+1)}(:,j) - \hat{\boldsymbol{u}}^{(q)}(:,j))\|_{1}$\;
}

$q = q + 1$\;
}
$\hat{\boldsymbol{S}} = \hat{\boldsymbol{S}}^{(q)}$, $\hat{\boldsymbol{u}} = \hat{\boldsymbol{u}}^{(q)}$\;
\end{algorithm}

%When $\tau = 100\%$, Algorithm \ref{Alg5} is the same as Algorithm \ref{Alg3}.
%Of course, in practice, $\tau < 100\%$, otherwise this algorithm is meaningless.
%Therefore, the accuracy of the solution obtained by Algorithm \ref{Alg5} will inevitably be affected.
%Obviously, when $\tau$ is larger, less information is lost, on the contrary, more information is lost.
%However, this is for the whole, and for the important samples we are interested in, the missing information does not seem to be much.

The theorem as follows shows the convergence of Algorithm \ref{Alg5}.
%and $\hat{\boldsymbol{X}}^{(0)} \in \mathbb{R}_{+}^{n \times n}$. %and $\|\boldsymbol{X}^{(0)}\|_{l_{1}} = 1$.

\begin{theorem}\label{Thm4.4}
Consider the following iteration in Algorithm \ref{Alg5}:
\begin{equation}\label{Pu1}
\hat{\boldsymbol{X}}^{(q+1)}(:,j) =
\left\{
\begin{array}{lcl}
\hat{\boldsymbol{X}}^{(q)}(:,j) & & j \in \Omega^{(0)} \backslash \Omega^{(q)},\\
\mathscr{T}(\alpha \mathbf{P}_{j}\hat{\boldsymbol{X}}^{(q)}(j,:)^{\boldsymbol{T}} + (1-\alpha)\mathbf{G}(:,j)) & & j \in \Omega^{(q)},
\end{array}
\right.
\end{equation}
%$\Omega^{(q)} \subseteq \Omega^{(q-1)} \subseteq \cdots \subseteq \Omega^{(0)} = \{1,2,...,n\}$
where $\Omega^{(0)} = \{1,2,...,n\}$ and $q=0, 1, \ldots$
If $\alpha \in (0,1)$, then the iteration \eqref{Pu1} converges to the unique fixed point in $\mathbb{R}_{+}^{n \times n}$ with arbitrary initial point $\hat{\boldsymbol{X}}_{0} \in \mathbb{R}_{+}^{n \times n}$.
\end{theorem}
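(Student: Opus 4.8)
The plan is to turn the partially updated iteration into a genuine contraction on a suitable closed set, after first arguing that the active index sets stabilize. First I would show the sets $\Omega^{(q)}$ are nested and eventually constant. By construction each $\Omega^{(q+1)}$ is selected as a subset of $\Omega^{(q)}$ (the indices of the top $card(\Omega^{(q+1)})$ values among $\{PV^{(q+1)}(j)\}_{j\in\Omega^{(q)}}$), so $\Omega^{(0)}\supseteq\Omega^{(1)}\supseteq\cdots$. The integers $card(\Omega^{(q)})$ are non-increasing and bounded below by $\varsigma$ via \eqref{eqn411}, hence eventually constant; combined with nesting this forces $\Omega^{(q)}=\Omega^{*}$ for all $q\ge Q$, for some finite $Q$ and fixed set $\Omega^{*}$. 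Consequently, once an index leaves the active set it is frozen forever: for $j\in\Omega^{(0)}\setminus\Omega^{*}$ the column $\hat{\boldsymbol{X}}^{(q)}(:,j)$ is constant for $q\ge Q$, and I denote this common value by $\bar{\boldsymbol{X}}(:,j)$. Since only finitely many iterations precede $Q$, convergence of the full sequence is equivalent to convergence of the tail $(\hat{\boldsymbol{X}}^{(q)})_{q\ge Q}$.

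Next I would set up the restricted map on a complete space. Let $\mathcal{S}=\{\mathbf{A}\in\mathbb{R}_{+}^{n\times n}:\mathbf{A}(:,j)=\bar{\boldsymbol{X}}(:,j)\ \text{for all } j\notin\Omega^{*}\}$, which is a closed, hence complete, subset of $(\mathbb{R}_{+}^{n\times n},\|\cdot\|_{l_{1}})$. For $q\ge Q$ the recursion \eqref{Pu1} uses the fixed active set $\Omega^{*}$, so it is the repeated application of the single map $\mathscr{H}_{\Omega^{*}}$ given by $\mathscr{H}_{\Omega^{*}}(\mathbf{A})(:,j)=\mathscr{T}(\alpha\mathbf{P}_{j}\mathbf{A}(j,:)^{\boldsymbol{T}}+(1-\alpha)\mathbf{G}(:,j))$ for $j\in\Omega^{*}$ and $\mathscr{H}_{\Omega^{*}}(\mathbf{A})(:,j)=\bar{\boldsymbol{X}}(:,j)$ otherwise. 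One checks that $\mathscr{H}_{\Omega^{*}}$ maps $\mathcal{S}$ into $\mathcal{S}$ and that $\hat{\boldsymbol{X}}^{(q)}\in\mathcal{S}$ for every $q\ge Q$, so the Banach framework applies to this fixed map.

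The heart of the argument is to prove $\mathscr{H}_{\Omega^{*}}$ is an $\alpha$-contraction on $\mathcal{S}$. Take $\mathbf{A},\mathbf{B}\in\mathcal{S}$; the frozen columns agree and contribute nothing to $\|\mathscr{H}_{\Omega^{*}}(\mathbf{A})-\mathscr{H}_{\Omega^{*}}(\mathbf{B})\|_{l_{1}}$. For each active $j\in\Omega^{*}$, Lemma \ref{lem-5.1} (non-expansiveness of $\mathscr{T}$) followed by the column stochasticity of $\mathbf{P}_{j}$ (whose induced $l_{1}$-norm is $1$, as used in the proof of Theorem \ref{the-5.6}) gives $\|\mathscr{H}_{\Omega^{*}}(\mathbf{A})(:,j)-\mathscr{H}_{\Omega^{*}}(\mathbf{B})(:,j)\|_{1}\le\alpha\|\mathbf{A}(j,:)-\mathbf{B}(j,:)\|_{1}$. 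The crucial observation is that $\mathbf{A}$ and $\mathbf{B}$ coincide on every frozen column, so this row difference is supported on $\Omega^{*}$, i.e. $\|\mathbf{A}(j,:)-\mathbf{B}(j,:)\|_{1}=\sum_{k\in\Omega^{*}}|\mathbf{A}(j,k)-\mathbf{B}(j,k)|$. Summing over $j\in\Omega^{*}$ and using that $\mathbf{A}-\mathbf{B}$ is itself supported on the active columns yields $\|\mathscr{H}_{\Omega^{*}}(\mathbf{A})-\mathscr{H}_{\Omega^{*}}(\mathbf{B})\|_{l_{1}}\le\alpha\sum_{j\in\Omega^{*}}\sum_{k\in\Omega^{*}}|\mathbf{A}(j,k)-\mathbf{B}(j,k)|\le\alpha\|\mathbf{A}-\mathbf{B}\|_{l_{1}}$. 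Since $\alpha\in(0,1)$, the Banach fixed point theorem \cite{CiarletLNFA} then provides a unique fixed point of $\mathscr{H}_{\Omega^{*}}$ in $\mathcal{S}$ to which the tail converges, and therefore the whole iteration \eqref{Pu1} converges.

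The step I expect to be most delicate is the contraction estimate, precisely because each active column is updated from a \emph{full row} of the current matrix, coupling it to the frozen columns; restricting to $\mathcal{S}$, where the frozen columns are pinned, is exactly what makes those row differences supported on $\Omega^{*}$ and keeps the contraction factor equal to $\alpha$. A secondary point that must be argued rather than assumed is the finite-time stabilization of $\Omega^{(q)}$: without it one iterates a map that changes from step to step, and the Banach fixed point theorem does not apply directly.
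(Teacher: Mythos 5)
Your proposal is correct, and it reaches the conclusion by a genuinely different route than the paper. The paper never argues that the active sets stabilize: it only uses the nesting $\Omega^{(q+1)}\subseteq\Omega^{(q)}\subseteq\cdots\subseteq\Omega^{(0)}$ to bound \emph{consecutive} iterates, showing directly that
$\|\hat{\boldsymbol{X}}^{(q+2)}-\hat{\boldsymbol{X}}^{(q+1)}\|_{l_{1}}\leq\alpha\|\hat{\boldsymbol{X}}^{(q+1)}-\hat{\boldsymbol{X}}^{(q)}\|_{l_{1}}$ via exactly the estimate you use (non-expansiveness of $\mathscr{T}$ from Lemma \ref{lem-5.1}, then $\|\mathbf{P}_{j}\|_{1}=1$, then enlarging the sum over $\Omega^{(q+1)}$ to a sum over all rows), and then appeals to the Banach fixed point theorem. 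That appeal is in fact informal, since the iteration map changes with $q$; what the paper's estimate really delivers is geometric decay of successive differences, hence a Cauchy sequence. Your version repairs precisely this looseness: by first proving $\Omega^{(q)}=\Omega^{*}$ for all $q\geq Q$ (non-increasing integer cardinalities bounded below by $\varsigma$, plus nesting), you reduce the tail to iteration of a single self-map $\mathscr{H}_{\Omega^{*}}$ on a complete space, where Banach's theorem applies verbatim, and you get a clean fixed-point characterization of the limit. The price is the extra stabilization step, which the paper's argument does not need. One simplification to your write-up: the restriction to $\mathcal{S}$ is not actually needed for the contraction constant. Since the frozen columns of $\mathscr{H}_{\Omega^{*}}(\mathbf{A})$ and $\mathscr{H}_{\Omega^{*}}(\mathbf{B})$ are the same constant vectors for \emph{any} nonnegative $\mathbf{A},\mathbf{B}$, they contribute nothing to the left-hand side, and the bound
$\sum_{j\in\Omega^{*}}\|\mathbf{A}(j,:)-\mathbf{B}(j,:)\|_{1}\leq\sum_{j=1}^{n}\|\mathbf{A}(j,:)-\mathbf{B}(j,:)\|_{1}=\|\mathbf{A}-\mathbf{B}\|_{l_{1}}$
already yields the factor $\alpha$ on all of $\mathbb{R}_{+}^{n\times n}$; the row-support observation you flag as the delicate point is correct but superfluous.
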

\begin{proof}
From \eqref{Pu1}, we obtain
\begin{equation*}
\begin{aligned}
& \|\hat{\boldsymbol{X}}^{(q+1)} -  \hat{\boldsymbol{X}}^{(q)}\|_{l_{1}}\\
= & \sum_{j=1}^{n}\|\hat{\boldsymbol{X}}^{(q+1)}(:,j) - \hat{\boldsymbol{X}}^{(q)}(:,j)\|_{1}\\
= & \sum_{j \in \Omega^{(q)}}\|\hat{\boldsymbol{X}}^{(q+1)}(:,j) - \hat{\boldsymbol{X}}^{(q)}(:,j)\|_{1} + \sum_{j \in \Omega^{(0)} \backslash \Omega^{(q)}}\|\hat{\boldsymbol{X}}^{(q+1)}(:,j) - \hat{\boldsymbol{X}}^{(q)}(:,j)\|_{1}\\
= & \sum_{j \in \Omega^{(q)}}\|\hat{\boldsymbol{X}}^{(q+1)}(:,j) - \hat{\boldsymbol{X}}^{(q)}(:,j)\|_{1}, \quad q = 0, 1, \ldots
\end{aligned}
\end{equation*}
From Step 7 of Algorithm \ref{Alg5}, we notice that $\Omega^{(q+1)} \subseteq \Omega^{(q)} \subseteq \cdots \subseteq \Omega^{(0)}$. Thus, if $j \in \Omega^{(q+1)}$, then $j \in \Omega^{(q)}$, and
\begin{equation}\label{eqn413}
{\small\begin{aligned}
& \|\hat{\boldsymbol{X}}^{(q+2)} -  \hat{\boldsymbol{X}}^{(q+1)}\|_{l_{1}}\\
= & \sum_{j \in \Omega^{(q+1)}}\|\mathscr{T}(\alpha \mathbf{P}_{j}\hat{\boldsymbol{X}}^{(q+1)}(j,:)^{\boldsymbol{T}} + (1-\alpha)\mathbf{G}(:,j)) - \mathscr{T}(\alpha \mathbf{P}_{j}\hat{\boldsymbol{X}}^{(q)}(j,:)^{\boldsymbol{T}} + (1-\alpha)\mathbf{G}(:,j))\|_{1}.
\end{aligned}}
\end{equation}
It is seen that $\hat{\boldsymbol{X}}^{(q+1)} \in \mathbb{R}^{n \times n}_{+}$ and $\hat{\boldsymbol{X}}^{(q)} \in \mathbb{R}^{n \times n}_{+}$ as $\hat{\boldsymbol{X}}^{(0)} \in \mathbb{R}^{n \times n}_{+}$. By Lemma \ref{lem-5.1} and \eqref{eqn413},
\begin{equation*}
\begin{aligned}
& \|\hat{\boldsymbol{X}}^{(q+2)} -  \hat{\boldsymbol{X}}^{(q+1)}\|_{l_{1}}\\
\leq & \sum_{j \in \Omega^{(q+1)}}\|(\alpha \mathbf{P}_{j}\hat{\boldsymbol{X}}^{(q+1)}(j,:)^{\boldsymbol{T}} + (1-\alpha)\mathbf{G}(:,j)) - (\alpha \mathbf{P}_{j}\hat{\boldsymbol{X}}^{(q)}(j,:)^{\boldsymbol{T}} + (1-\alpha)\mathbf{G}(:,j))\|_{1}\\
= & \sum_{j \in \Omega^{(q+1)}}\|\alpha \mathbf{P}_{j}\big((\hat{\boldsymbol{X}}^{(q+1)}(j,:)^{\boldsymbol{T}} - \hat{\boldsymbol{X}}^{(q)}(j,:)^{\boldsymbol{T}}\big)\|_{1}
\leq \alpha \sum_{j \in \Omega^{(q+1)}}\|\hat{\boldsymbol{X}}^{(q+1)}(j,:)^{\boldsymbol{T}} - \hat{\boldsymbol{X}}^{(q)}(j,:)^{\boldsymbol{T}}\|_{1}\\
\leq & \alpha \sum_{j \in \Omega^{(q+1)}}\|\hat{\boldsymbol{X}}^{(q+1)}(j,:)^{\boldsymbol{T}} - \hat{\boldsymbol{X}}^{(q)}(j,:)^{\boldsymbol{T}}\|_{1} + \alpha \sum_{j \in \Omega^{(0)} \backslash \Omega^{(q+1)}}\|\hat{\boldsymbol{X}}^{(q+1)}(j,:)^{\boldsymbol{T}} - \hat{\boldsymbol{X}}^{(q)}(j,:)^{\boldsymbol{T}}\|_{1}\\
= & \alpha \sum_{j=1}^{n}\|\hat{\boldsymbol{X}}^{(q+1)}(j,:)^{\boldsymbol{T}} - \hat{\boldsymbol{X}}^{(q)}(j,:)^{\boldsymbol{T}}\|_{1}
= \alpha \|\hat{\boldsymbol{X}}^{(q+1)} - \hat{\boldsymbol{X}}^{(q)}\|_{l_{1}}.
\end{aligned}
\end{equation*}
So we complete the proof by the Banach fixed point theorem \cite{CiarletLNFA} and the fact that $0<\alpha<1$.
\end{proof}

\section{A sparse power method with partial updating for higher-order PageRank problem}\label{irt}
In Section \ref{rt}, we take $\mathbf{G}$ to be a fixed matrix.
However, similar to the truncated power method proposed in \cite{Ding2018Fast}, the problem \eqref{3.3} is not mathematically equivalent to the original problem \eqref{3.2}, either. In this section, we pay special attention to the original high-order PageRank problem \eqref{3.2} and consider how to solve it by using a sparse power method.

For any matrix $\mathbf{A} \in \mathbb{R}_{+}^{n \times n}$, we define the operator $\mathscr{Z}: \mathbb{R}_{+}^{n \times n} \mapsto \mathbb{R}_{+}^{n \times n}$ as
\begin{equation*}
 {\mathscr{Z}}(\mathbf{A}):= \tilde{\mathscr{T}}(\mathscr{W}(\mathbf{A})),
\end{equation*}
where the operator $\mathscr{W}$ is defined in \eqref{2.6}.
%\begin{equation*}
%{\mathscr{Z}}: %\mathbb{R}_{+}^{n \times n} \mapsto \mathbb{R}_{+}^{n \times n},
%{\mathscr{Z}}(\mathbf{A}):= \tilde{\mathscr{T}}(\mathscr{W}(\mathbf{A})).
%\end{equation*}
Then we construct the sequence
\begin{equation*}
\check{\boldsymbol{X}}^{(q)} = \check{\boldsymbol{S}}^{(q)} + \mathbf{e} (\check{\boldsymbol{u}}^{(q)})^{\boldsymbol{T}}, \ \ q = 0,1,\ldots,
\end{equation*}
%where $\boldsymbol{S}^{(q)} \in \mathbb{R}^{n \times n}_{+}$ and $\boldsymbol{u}^{(q)} \in \mathbb{R}^{n}_{+}$.
then the truncated power method for \eqref{3.2} can be written as
\begin{equation*}
\check{\boldsymbol{X}}^{(q+1)} = \mathscr{Z}(\check{\boldsymbol{X}}^{(q)}), \ \ q = 0,1,\ldots
\end{equation*}

We present the algorithm as follows. Similar to \eqref{5.3}--\eqref{eqn4.7}, we have the following for the matrix-vector products
%\begin{equation*}
%\check{\boldsymbol{Y}}^{(q+1)}(:,j) = \alpha (\mathbf{Q}_{j} \check{\boldsymbol{X}}^{(q)}(:,j)^{\boldsymbol{T}}) + \frac{\alpha}{n}(\|\check{\boldsymbol{X}}^{(q)}(:,j)^{\boldsymbol{T}}\|_{1} - \|\mathbf{Q}_{j} \check{\boldsymbol{X}}^{(q)}(:,j)^{\boldsymbol{T}}\|_{1})\mathbf{e} +(1-\alpha) \|\check{\boldsymbol{X}}^{(q)}(:,j)^{\boldsymbol{T}}\|_{1}\mathbf{v}
%\end{equation*}
\begin{align}\label{eqn51}
 & \check{\boldsymbol{Y}}^{(q+1)}(:,j)  \nonumber \\
 & =   \alpha (\mathbf{Q}_{j} \check{\boldsymbol{X}}^{(q)}(:,j)^{\boldsymbol{T}}) + \frac{\alpha}{n}(\|\check{\boldsymbol{X}}^{(q)}(:,j)^{\boldsymbol{T}}\|_{1} - \|\mathbf{Q}_{j} \check{\boldsymbol{X}}^{(q)}(:,j)^{\boldsymbol{T}}\|_{1})\mathbf{e} +(1-\alpha) \|\check{\boldsymbol{X}}^{(q)}(:,j)^{\boldsymbol{T}}\|_{1}\mathbf{v}.
\end{align}

\begin{algorithm}[H]
\caption{A sparse power method for higher-order PageRank}\label{Alg4}
\SetKwInOut{Input}{Input}\SetKwInOut{Output}{Output}
\Input{$\mathbf{Q}_{j}$, $j=1,2,\ldots, n$, $\check{\boldsymbol{S}}^{(0)}$, $\check{\boldsymbol{u}}^{(0)}$, $\mathbf{v}$, $\alpha$, $\beta$, $tol$;}
\Output{$\check{\boldsymbol{S}}$, $\check{\boldsymbol{u}}$;}
\BlankLine
{\bf Let $q=0$ and $res=1$}\;
\While {$res > tol$}
{
$res = 0$\;
\For{$j=1,2,\ldots,n$}
{
$\mathbf{y} = \check{\boldsymbol{S}}^{(q)}(j,:)^{\boldsymbol{T}} + \check{\boldsymbol{u}}^{(q)}$\;
%\tcp{$\mathbf{y}$ represents $\check{\boldsymbol{X}}^{(q)}(j,:)^{\boldsymbol{T}}$}
$\check{\boldsymbol{Y}}^{(q+1)}(:,j) = \alpha (\mathbf{Q}_{j} \mathbf{y}) + \frac{\alpha}{n}(\|\mathbf{y}\|_{1} - \|\mathbf{Q}_{j} \mathbf{y}\|_{1})\mathbf{e} +(1-\alpha) \|\mathbf{y}\|_{1}\mathbf{v}$\;
$[\check{\boldsymbol{S}}^{(q+1)}(:,j), \check{\boldsymbol{u}}^{(q+1)}(j)] = thresholding(\check{\boldsymbol{Y}}^{(q+1)}(:,j),\beta)$\quad\%~{\tt sparsing}\
%\tcp{sparsing}
$res = res + \|(\check{\boldsymbol{S}}^{(q+1)}(:,j) - \check{\boldsymbol{S}}^{(q)}(:,j)) + (\check{\boldsymbol{u}}^{(q+1)}(:,j) - \check{\boldsymbol{u}}^{(q)}(:,j))\|_{1}$\;
}
$q = q + 1$\;
}
$\check{\boldsymbol{S}} = \check{\boldsymbol{S}}^{(q)}$, $\check{\boldsymbol{u}} = \check{\boldsymbol{u}}^{(q)}$\;
\end{algorithm}
\begin{remark}
In Algorithm \ref{Alg4}, the original high-order PageRank problem \eqref{3.2} is solved instead of \eqref{3.3}, we expect that the approximation obtained from Algorithm \ref{Alg4} is more accurate than those from Algorithm \ref{Alg2} and Algorithm \ref{Alg3}. The key of Algorithm \ref{Alg4} is to apply the sparse operator $\tilde{\mathscr{T}}$ to the matrix obtained in Step 6, and we call it a sparse power method for higher-order PageRank. Analogous to Algorithm \ref{Alg3}, there is no need to form and store the vectors $\{{\bf d}_{j}\}_{j=1}^n$, and we only need to store a vector $\mathbf{v}$ instead of a matrix $\mathbf{G}$. The difference is that the value before ${\bf v}$ is fixed in Algorithm \ref{Alg3}, while that is variable in Algorithm \ref{Alg4}; see Step 6 of the two algorithms.
\end{remark}
%Therefore, Algorithm \ref{Alg4} can also use the information of $\mathbf{d}_{j}$ implicitly without storing $\mathbf{d}_{j}$ to ameliorate the overall performance of the algorithm.

We are ready to discuss the convergence of Algorithm \ref{Alg4}. If there is a number $\eta \in (0,1)$  such that
$$
\|\check{\boldsymbol{X}}^{(q+2)} - \check{\boldsymbol{X}}^{(q+1)}\|_{l_{1}} \leq \eta \|\check{\boldsymbol{X}}^{(q+1)} - \check{\boldsymbol{X}}^{(q)}\|_{l_{1}},\quad q=0,1,\ldots
$$
in each iteration, then according to the Banach fixed point theorem \cite{CiarletLNFA}, Algorithm \ref{Alg4} converges.
Unfortunately, it seems difficult to prove it strictly. The reason is that the value before ${\bf v}$ is variable during iterations of Algorithm \ref{Alg4}; refer to \eqref{eqn51}.
This is unlike the proof of Theorem \ref{the-5.6}, where the value can be eliminated in two consecutive iterations.
To deal with this problem, the theorem as follows considers the convergence of Algorithm \ref{Alg4} from a probabilistic point of view.
\begin{theorem}\label{theorem5.4}
For any matrices $ \mathbf{A}, \mathbf{B} \in \mathbb{R}_{+}^{n \times n}$, suppose that the probability
\begin{equation}\label{eqn52}
{\rm Pr}(\|\tilde{\mathscr{T}}(\mathbf{A}) - \tilde{\mathscr{T}}(\mathbf{B})\|_{l_{1}} \leq \eta \|\mathbf{A} - \mathbf{B}\|_{l_{1}}) = \omega,
\end{equation}
where $0<\eta<1$, and $\omega$ is a probability related to $\eta$.
Then we have that
\begin{equation}
{\rm Pr}(\|\mathscr{Z}(\mathbf{A}) - \mathscr{Z}(\mathbf{B})\|_{l_{1}} \leq \eta \|\mathbf{A} - \mathbf{B}\|_{l_{1}}) \geq \omega.
\end{equation}
\end{theorem}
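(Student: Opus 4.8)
The plan is to exploit the factorization $\mathscr{Z} = \tilde{\mathscr{T}} \circ \mathscr{W}$ and to isolate the only genuine source of randomness, namely the behaviour of the sparsifying operator $\tilde{\mathscr{T}}$. The outer map $\tilde{\mathscr{T}}$ is controlled probabilistically by the hypothesis \eqref{eqn52}, whereas the inner map $\mathscr{W}$ is deterministic. First I would prove that $\mathscr{W}$ is \emph{non-expansive} in the $l_{1}$ matrix norm; combining this deterministic bound with the probabilistic bound on $\tilde{\mathscr{T}}$ through an event-inclusion argument then yields the claim.

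To establish the non-expansiveness of $\mathscr{W}$, recall from \eqref{3.4} that, written through the vec operator, $\mathscr{W}$ acts as multiplication by $\boldsymbol{M} = \alpha\boldsymbol{P} + (1-\alpha)\boldsymbol{V}$, i.e. $vec(\mathscr{W}(\mathbf{A})) = \boldsymbol{M}\,vec(\mathbf{A})$. Since $\boldsymbol{P}$ and $\boldsymbol{V}$ are column stochastic and $\alpha + (1-\alpha) = 1$, the matrix $\boldsymbol{M}$ is column stochastic as well. For any column stochastic matrix and any (not necessarily nonnegative) vector $\mathbf{z}$ one has $\|\boldsymbol{M}\mathbf{z}\|_{1} \le \|\mathbf{z}\|_{1}$, because $\sum_{i} |\sum_{k} M_{ik} z_{k}| \le \sum_{k} |z_{k}| \sum_{i} M_{ik} = \sum_{k} |z_{k}|$. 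Applying this to $\mathbf{z} = vec(\mathbf{A}) - vec(\mathbf{B})$ and using $\|\cdot\|_{l_{1}} = \|vec(\cdot)\|_{1}$ gives
\begin{equation*}
\|\mathscr{W}(\mathbf{A}) - \mathscr{W}(\mathbf{B})\|_{l_{1}} \leq \|\mathbf{A} - \mathbf{B}\|_{l_{1}}, \qquad \mathbf{A}, \mathbf{B} \in \mathbb{R}_{+}^{n\times n}.
\end{equation*}
Note also that $\mathscr{W}$ maps $\mathbb{R}_{+}^{n\times n}$ into $\mathbb{R}_{+}^{n\times n}$, so $\mathscr{W}(\mathbf{A})$ and $\mathscr{W}(\mathbf{B})$ are admissible inputs for \eqref{eqn52}.

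Next I would apply the hypothesis \eqref{eqn52} to the transformed pair $\mathscr{W}(\mathbf{A}), \mathscr{W}(\mathbf{B})$ rather than to $\mathbf{A}, \mathbf{B}$ themselves; since the statement holds for \emph{any} pair of nonnegative matrices, this yields ${\rm Pr}(\|\tilde{\mathscr{T}}(\mathscr{W}(\mathbf{A})) - \tilde{\mathscr{T}}(\mathscr{W}(\mathbf{B}))\|_{l_{1}} \leq \eta \|\mathscr{W}(\mathbf{A}) - \mathscr{W}(\mathbf{B})\|_{l_{1}}) = \omega$. On the event appearing inside this probability I would chain the two bounds, using $\mathscr{Z} = \tilde{\mathscr{T}} \circ \mathscr{W}$ and then the non-expansiveness of $\mathscr{W}$,
\begin{equation*}
\|\mathscr{Z}(\mathbf{A}) - \mathscr{Z}(\mathbf{B})\|_{l_{1}} \leq \eta \|\mathscr{W}(\mathbf{A}) - \mathscr{W}(\mathbf{B})\|_{l_{1}} \leq \eta \|\mathbf{A} - \mathbf{B}\|_{l_{1}}.
\end{equation*}
Thus the favourable event for $\tilde{\mathscr{T}}$, which has probability $\omega$, is contained in the target event $\{\|\mathscr{Z}(\mathbf{A}) - \mathscr{Z}(\mathbf{B})\|_{l_{1}} \le \eta \|\mathbf{A} - \mathbf{B}\|_{l_{1}}\}$, and by monotonicity of probability the latter has probability at least $\omega$.

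The step I expect to be the main obstacle is the probabilistic bookkeeping rather than any hard estimate: one must verify that the deterministic bound from $\mathscr{W}$ only \emph{decreases} the right-hand side, so that the $\tilde{\mathscr{T}}$-contraction event is genuinely a \emph{subset} of the $\mathscr{Z}$-contraction event (and not merely correlated with it), which is precisely what turns the equality $=\omega$ into the lower bound $\ge \omega$. A secondary point worth stating explicitly is that the hypothesis must be invoked for the shifted arguments $\mathscr{W}(\mathbf{A}), \mathscr{W}(\mathbf{B})$; this is legitimate only because \eqref{eqn52} is assumed to hold uniformly over all nonnegative matrix pairs and because $\mathscr{W}$ preserves nonnegativity.
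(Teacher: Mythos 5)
Your proposal is correct and takes essentially the same route as the paper's proof: factor $\mathscr{Z}=\tilde{\mathscr{T}}\circ\mathscr{W}$, establish that $\mathscr{W}$ is non-expansive in the $l_{1}$ norm (the paper argues column-by-column using that $\alpha\mathbf{P}_{j}+(1-\alpha)\widetilde{\mathbf{V}}$ is column stochastic, while you argue globally via the vec form and the column-stochasticity of $\alpha\boldsymbol{P}+(1-\alpha)\boldsymbol{V}$), and then pass to probabilities by event inclusion, which the paper phrases equivalently as ${\rm Pr}(D\,|\,C)=1$ together with the total probability formula. Your explicit observation that the hypothesis must be invoked at the shifted pair $(\mathscr{W}(\mathbf{A}),\mathscr{W}(\mathbf{B}))$, legitimate because $\mathscr{W}$ preserves nonnegativity, is a point the paper leaves implicit.
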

\begin{proof}
From the definition of the operator $\mathscr{Z}$, we have
\begin{equation*}
  \|\mathscr{Z}(\mathbf{A}) - \mathscr{Z}(\mathbf{B})\|_{l_{1}} = \|\tilde{\mathscr{T}}(\mathscr{W}(\mathbf{A})) - \tilde{\mathscr{T}}(\mathscr{W}(\mathbf{B}))\|_{l_{1}}.
\end{equation*}
If $\|\tilde{\mathscr{T}}(\mathbf{A}) - \tilde{\mathscr{T}}(\mathbf{B})\|_{l_{1}} \leq \eta \|\mathbf{A} - \mathbf{B}\|_{l_{1}}$, then
\begin{equation*}
\begin{aligned}
&\|\mathscr{Z}(\mathbf{A}) - \mathscr{Z}(\mathbf{B})\|_{l_{1}} \leq \eta \|\mathscr{W}(\mathbf{A}) - \mathscr{W}(\mathbf{B})\|_{l_{1}} \\
= & \eta \sum_{j=1}^{n}\|[\alpha\mathbf{P}_{j} + (1-\alpha)\tilde{\mathbf{V}}](\mathbf{A}(j,:)^{\boldsymbol{T}} - \mathbf{B}(j,:)^{\boldsymbol{T}}) \|_{1}\\
\leq & \eta \sum_{j=1}^{n}\|(\mathbf{A}(j,:)^{\boldsymbol{T}} - \mathbf{B}(j,:)^{\boldsymbol{T}}) \|_{1}= \eta \|\mathbf{A} - \mathbf{B}\|_{l_{1}}.
\end{aligned}
\end{equation*}

Denote by $C$ the event $\|\tilde{\mathscr{T}}(\mathbf{A}) - \tilde{\mathscr{T}}(\mathbf{B})\|_{l_{1}} \leq \eta \|\mathbf{A} - \mathbf{B}\|_{l_{1}}$ and by $D$ the event $\|\mathscr{Z}(\mathbf{A}) - \mathscr{Z}(\mathbf{B})\|_{l_{1}} \leq \eta \|\mathbf{A} - \mathbf{B}\|_{l_{1}}$.
We see that the probability ${\rm Pr}(D|C) = 1$.
From \eqref{eqn52} and the total probability formula \cite{CKL1979A}, we obtain
\begin{equation*}
{\rm Pr}(D) \geq {\rm Pr}(C){\rm Pr}(D|C) = {\rm Pr}(C) = \omega,
\end{equation*}
which completes the proof.
\end{proof}

%\begin{remark}

%\end{remark}

%Indeed, if
%$$
%\|\mathscr{Z}(\mathbf{A}) - \mathscr{Z}(\mathbf{B})\|_{l_{1}} \leq \eta \|\mathbf{A} - \mathbf{B}\|_{l_{1}}~(0<\eta<1)
%$$
%in each iteration,

\begin{figure}[h]
  \centering
  \includegraphics[height=8cm,width=13cm]{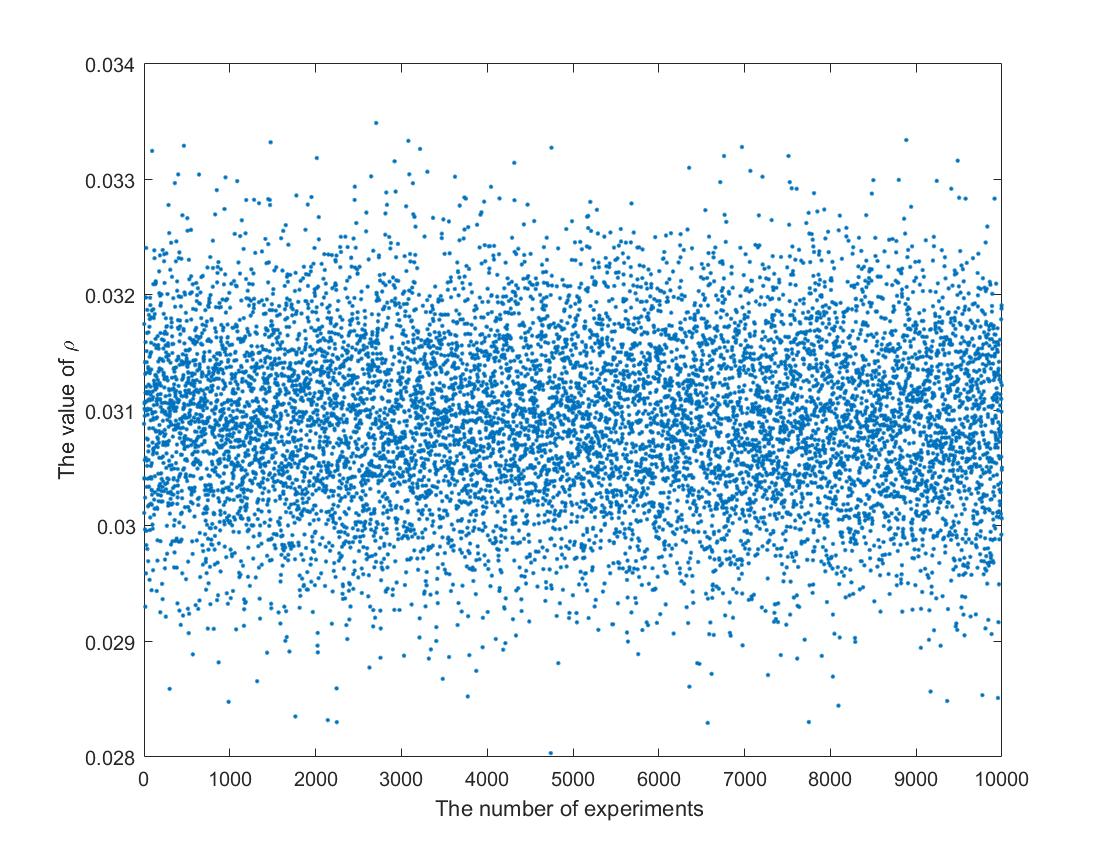}\\
  \caption{The values of $\rho=\frac{\|\tilde{\mathscr{T}}(\mathbf{A}) - \tilde{\mathscr{T}}(\mathbf{B})\|_{l_{1}}}{\|\mathbf{A} - \mathbf{B}\|_{l_{1}}}$ during iterations of Algorithm \ref{Alg4}, where we randomly generate two $1000 \times 1000$ non-negative matrices $\mathbf{A}$ and $\mathbf{B}$ for $10000$ times.} \label{fig.100}
\end{figure}

In general, the larger is $\eta$, the larger is $\omega$, and the probability of the event $D$ happens is higher.
Consequently, the probability of Algorithm \ref{Alg4} converges is higher.
However, we find experimentally that even if $\eta$ is relatively small, the probability of the event $D$ happens may still be high.
To show this more precisely,
we run Algorithm \ref{Alg4} with two $1000 \times 1000$ non-negative matrices $\mathbf{A}$ and $\mathbf{B}$
generated randomly for $10000$ times. The sparsity degree
is chosen as $\beta = \frac{1}{n^{2}}$, where $n$ is the size of the matrices.
In Figure \ref{fig.100}, we plot the 10000 values of the parameter denoted by
$$
\rho=\frac{\|\tilde{\mathscr{T}}(\mathbf{A}) - \tilde{\mathscr{T}}(\mathbf{B})\|_{l_{1}}}{\|\mathbf{A} - \mathbf{B}\|_{l_{1}}}.
$$
It is seen that in this experiment, if we take $\eta = 0.034$, then ${\rm Pr}(\|\tilde{\mathscr{T}}(\mathbf{A}) - \tilde{\mathscr{T}}(\mathbf{B})\|_{l_{1}} \leq \eta \|\mathbf{A} - \mathbf{B}\|_{l_{1}}) = 1$, which implies that the probability of Algorithm \ref{Alg4} converges can be very high. Experimentally, we find that Algorithm \ref{Alg4} works quite well in practice.

Finally, we apply the same trick of {\it partial updating} to Algorithm \ref{Alg4}, and get the following algorithm.

\begin{algorithm}[H]
\caption{A sparse power method with partial updating for higher-order PageRank}\label{Alg6}
\SetKwInOut{Input}{Input}\SetKwInOut{Output}{Output}
\Input{$\mathbf{Q}_{j}$, $j=1,2,\ldots, n$, $\bar{\boldsymbol{S}}^{(0)}$, $\bar{\boldsymbol{u}}^{(0)}$, $\alpha$, $\beta$, $\varsigma$, $\tau$, $\tilde{\ell}$, $tol$;}
\Output{$\bar{\boldsymbol{S}}$, $\bar{\boldsymbol{u}}$;}
\BlankLine
{\bf Let $q=0$ and $res=1$}\;
\While {$res > tol$}
{
$res = 0$\;
\eIf{ $q \leq \tilde{\ell}-1$ }
{$\Omega^{(q)}$ = \{1, 2, \ldots, $n$\}\;}
%{\For {$j \in \Omega^{(q-1)}$}
%{{\bf Compute $PV^{(q)}(j)$ by using \eqref{eqn410}}\;}
%{\bf Sort the elements in the set $\{ PV^{(q-1)}(j) | j \in  \Omega^{(q-1)} \}$ in descending order}\;
{{\bf Determine} $card(\Omega^{(q)})$ via \eqref{eqn411} and {\bf select the indexes corresponding to the top $card(\Omega^{(q)})$ elements in $\{{PV^{(q)}}(j)\}_{j \in \Omega^{(q-1)}}$ to form $\Omega^{(q)}$ }\quad\%~{\tt determine the index set}\;
}
\For{$j \in \Omega^{(q)}$}
{
%$j = \Omega^{(q)}(\tilde{i})$\quad\%~{\tt $\Omega^{(q)}(\tilde{i})$ represents the $\tilde{i}$-th element of $\Omega^{(q)}$}\
$\mathbf{y} = \bar{\boldsymbol{S}}^{(q)}(j,:)^{\boldsymbol{T}} + \bar{\boldsymbol{u}}^{(q)}$\;
%\tcp{$\mathbf{y}$ represents $\bar{\boldsymbol{X}}^{(q)}(j,:)^{\boldsymbol{T}}$}
$\bar{\boldsymbol{Y}}^{(q+1)}(:,j) = \alpha (\mathbf{Q}_{j} \mathbf{y}) + \frac{\alpha}{n}(\|\mathbf{y}\|_{1} - \|\mathbf{Q}_{j} \mathbf{y}\|_{1})\mathbf{e} + (1-\alpha) \|\mathbf{y}\|_{1}\mathbf{v}$\;
$[\bar{\boldsymbol{S}}^{(q+1)}(:,j), \bar{\boldsymbol{u}}^{(q+1)}(j)] = thresholding(\bar{\boldsymbol{Y}}^{(q+1)}(:,j),\beta)$\quad\%~{\tt sparsing}\
%\tcp{sparsing}
{\bf Compute $PV^{(q+1)}(j)$ by using \eqref{eqn410}}\;
$res = res + \|(\bar{\boldsymbol{S}}^{(q+1)}(:,j) - \bar{\boldsymbol{S}}^{(q)}(:,j)) + (\bar{\boldsymbol{u}}^{(q+1)}(:,j) - \bar{\boldsymbol{u}}^{(q)}(:,j))\|_{1}$\;
}
$q = q + 1$\;
}
$\bar{\boldsymbol{S}} = \bar{\boldsymbol{S}}^{(q)}$, $\bar{\boldsymbol{u}} = \bar{\boldsymbol{u}}^{(q)}$\;
\end{algorithm}

The following theorem indicates that Algorithm \ref{Alg6} has a high probability of convergence. The proof is a combination of Theorem \ref{Thm4.4} and Theorem \ref{theorem5.4}, and thus is omitted.
\begin{theorem}
Consider the iterations of Algorithm \ref{Alg6}
\begin{equation}\label{Pu2}
\bar{\boldsymbol{X}}^{(q+1)}(:,j) =
\left\{
\begin{array}{lcl}
\mathscr{T}(\alpha \mathbf{P}_{j}\bar{\boldsymbol{X}}^{(q)}(j,:)^{\boldsymbol{T}} + (1-\alpha)\widetilde{\mathbf{V}}\bar{\boldsymbol{X}}^{(q)}(j,:)^{\boldsymbol{T}}) & & j \in \Omega^{(q)},\\
\bar{\boldsymbol{X}}^{(q)}(:,j) & & j \in \Omega^{(0)} \backslash \Omega^{(q)},
\end{array}
\right.
\end{equation}
where $\Omega^{(0)} = \{1,2,...,n\}$.
Given an arbitrary initial point $\bar{\boldsymbol{X}}_{0} \in \mathbb{R}_{+}^{n \times n}$, and for any matrices $ \mathbf{A}, \mathbf{B} \in \mathbb{R}_{+}^{n \times n}$, if
\begin{equation*}
{\rm Pr}(\|\tilde{\mathscr{T}}(\mathbf{A}) - \tilde{\mathscr{T}}(\mathbf{B})\|_{l_{1}} \leq \eta \|\mathbf{A} - \mathbf{B}\|_{l_{1}}) = \omega,
\end{equation*}
where $0<\eta<1$ and $\omega$ is a probability related to $\eta$.
Then we have
\begin{equation*}
{\rm Pr}(\|\boldsymbol{X}^{(q+2)} -  \boldsymbol{X}^{(q+1)}\|_{l_{1}} \leq \eta \|\boldsymbol{X}^{(q+1)} - \boldsymbol{X}^{(q)}\|_{l_{1}}) \geq \omega, \ \ \ q = 0, 1, \ldots
\end{equation*}
\end{theorem}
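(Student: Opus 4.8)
The plan is to combine the two convergence mechanisms already established in the paper: the deterministic handling of the partial-updating index sets from Theorem \ref{Thm4.4}, and the probabilistic contraction for the sparse operator $\tilde{\mathscr{T}}$ composed with $\mathscr{W}$ from Theorem \ref{theorem5.4}. First I would observe that the iteration \eqref{Pu2} has exactly the structure of \eqref{Pu1}, except that the underlying column update operator is now $\mathscr{T}(\alpha \mathbf{P}_{j}(\cdot) + (1-\alpha)\widetilde{\mathbf{V}}(\cdot))$ rather than $\mathscr{T}(\alpha \mathbf{P}_{j}(\cdot) + (1-\alpha)\mathbf{G}(:,j))$; in operator form the full-matrix map is $\mathscr{Z} = \tilde{\mathscr{T}} \circ \mathscr{W}$. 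The key point inherited from Theorem \ref{Thm4.4} is the nesting $\Omega^{(q+1)} \subseteq \Omega^{(q)} \subseteq \cdots \subseteq \Omega^{(0)}$ enforced in Step 7 of Algorithm \ref{Alg6}, which guarantees that any column updated at step $q+1$ was also updated at step $q$, so the frozen columns contribute zero to consecutive differences.

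Second, I would carry out the telescoping exactly as in the proof of Theorem \ref{Thm4.4}: expand $\|\bar{\boldsymbol{X}}^{(q+2)} - \bar{\boldsymbol{X}}^{(q+1)}\|_{l_{1}}$ as a column sum, split it over $\Omega^{(q+1)}$ and its complement $\Omega^{(0)} \backslash \Omega^{(q+1)}$, and discard the complement because those columns are unchanged. On the remaining index set, because $\Omega^{(q+1)} \subseteq \Omega^{(q)}$, both $\bar{\boldsymbol{X}}^{(q+1)}(:,j)$ and $\bar{\boldsymbol{X}}^{(q)}(:,j)$ arise from applying $\mathscr{T}$ to the corresponding affine image. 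At this stage, instead of invoking the non-expansiveness of $\mathscr{T}$ deterministically, I would invoke the probabilistic contraction assumption: conditioned on the event $C$ that $\|\tilde{\mathscr{T}}(\mathbf{A}) - \tilde{\mathscr{T}}(\mathbf{B})\|_{l_{1}} \leq \eta \|\mathbf{A} - \mathbf{B}\|_{l_{1}}$, the column-wise bound yields a factor $\eta$, and then the stochasticity of $\alpha \mathbf{P}_{j} + (1-\alpha)\widetilde{\mathbf{V}}$ contracts the row differences just as in Theorem \ref{theorem5.4}. Padding the truncated sum back up to the full sum over $j=1,\ldots,n$ (legitimate since every term is nonnegative) then recovers $\eta \|\bar{\boldsymbol{X}}^{(q+1)} - \bar{\boldsymbol{X}}^{(q)}\|_{l_{1}}$.

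Finally, I would translate this conditional deterministic bound into the stated probabilistic statement. As in Theorem \ref{theorem5.4}, letting $C$ be the contraction event for $\tilde{\mathscr{T}}$ with ${\rm Pr}(C) = \omega$ and letting $D$ be the event $\|\bar{\boldsymbol{X}}^{(q+2)} - \bar{\boldsymbol{X}}^{(q+1)}\|_{l_{1}} \leq \eta \|\bar{\boldsymbol{X}}^{(q+1)} - \bar{\boldsymbol{X}}^{(q)}\|_{l_{1}}$, the chain of inequalities above shows ${\rm Pr}(D \mid C) = 1$, and the total probability formula gives ${\rm Pr}(D) \geq {\rm Pr}(C){\rm Pr}(D \mid C) = \omega$, which is the desired conclusion. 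The step I expect to be the main obstacle is ensuring that the partial-updating index restriction interacts cleanly with the probabilistic argument: one must verify that freezing columns neither inflates the consecutive difference (handled by nonnegativity when padding the sum back to the full range) nor disturbs the contraction event $C$, which is defined on the full matrices $\mathbf{A}, \mathbf{B}$ rather than on the truncated columns. Because the excerpt states the proof is merely a combination of Theorem \ref{Thm4.4} and Theorem \ref{theorem5.4}, the intended argument evidently treats the frozen-column reduction and the probabilistic contraction as modular and composable, so the real subtlety lies in confirming that this modularity is valid rather than in any new estimate.
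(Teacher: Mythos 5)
Your proposal is correct and takes exactly the route the paper intends: the paper omits this proof, saying only that it is ``a combination of Theorem \ref{Thm4.4} and Theorem \ref{theorem5.4},'' and your argument --- nested index sets $\Omega^{(q+1)}\subseteq\Omega^{(q)}$ to drop the frozen columns, the probabilistic contraction of $\tilde{\mathscr{T}}$ together with the column-stochasticity of $\alpha\mathbf{P}_{j}+(1-\alpha)\widetilde{\mathbf{V}}$, padding the sum back to all $n$ columns by nonnegativity, and the total-probability step ${\rm Pr}(D)\geq{\rm Pr}(C){\rm Pr}(D|C)=\omega$ --- is precisely that combination. The full-matrix-versus-subset subtlety you flag closes cleanly: apply the event $C$ to the matrices $\mathbf{A},\mathbf{B}$ defined by $\mathbf{A}(:,j)=\mathscr{W}(\bar{\boldsymbol{X}}^{(q+1)})(:,j)$ and $\mathbf{B}(:,j)=\mathscr{W}(\bar{\boldsymbol{X}}^{(q)})(:,j)$ for $j\in\Omega^{(q+1)}$, with identical columns chosen for $j\notin\Omega^{(q+1)}$, so that both sides of the contraction inequality for $\tilde{\mathscr{T}}$ collapse to sums over $\Omega^{(q+1)}$ alone and the modular use of the two theorems is legitimate.
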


%The reason why we give this plan is that in practical applications, we care about some samples with the top PageRank value, therefore, we only update some samples with the top ranking each time.

\section{Numerical experiments}\label{ne}
In this section, we perform numerical experiments to show the numerical performance and the efficiency of the proposed algorithms for high-order PageRank.
More precisely, we compare the proposed algorithms with some state-of-the art algorithms on real-world and synthetic data sets.
All the experiments are run on a 16 cores double Intel(R) Xeon(R) CPU E5-2637 v4, and with CPU 3.50 GHz and RAM 256GB. The operation system is 64-bit Windows 10, and the numerical results are obtained from running the MATLAB R2018b.

\subsection{On implementation of the matrix-vector products for large-scale multilinear PageRank}\label{Sec6.1}
We first discuss some details on implementation of the algorithms for multilinear PageRank.
Recall from \eqref{eq29} that the multilinear PageRank problem in a second-order Markov chain can be written as
\begin{equation}\label{6.1}
\mathbf{x} = \alpha \boldsymbol{R} (\mathbf{x} \otimes \mathbf{x}) + (1-\alpha)\mathbf{v},
\end{equation}
where the matrix $\boldsymbol{R}$ is defined in \eqref{eqn210}. In the experiments, we choose $\mathbf{v} = \frac{\mathbf{e}}{n}$, and $\mathbf{e}$ is the vector of all ones.
Let
\begin{equation}\label{eqn62}
\widetilde{\boldsymbol{Q}}=\left[\begin{array}{ccc|ccc|c|ccc}
\mathcal{Q}_{111} & \cdots & \mathcal{Q}_{1n1} & \mathcal{Q}_{112} & \cdots & \mathcal{Q}_{1n2} & \cdots & \mathcal{Q}_{11n} & \cdots & \mathcal{Q}_{1nn} \\
\mathcal{Q}_{211} & \cdots & \mathcal{Q}_{2n1} & \mathcal{Q}_{212} & \cdots & \mathcal{Q}_{2n2} & \cdots & \mathcal{Q}_{21n} & \cdots & \mathcal{Q}_{2nn} \\
\vdots & \ddots & \vdots & \vdots & \ddots & \vdots & \vdots & \vdots & \ddots & \vdots \\
\mathcal{Q}_{n11} & \cdots & \mathcal{Q}_{nn1} & \mathcal{Q}_{n12} & \cdots & \mathcal{Q}_{nn2} & \cdots & \mathcal{Q}_{n1n} & \cdots & \mathcal{Q}_{nnn}
\end{array}\right],
\end{equation}
then we have from \eqref{eqn210} and \eqref{eqn62} that
\begin{equation}\label{eqn66}
\boldsymbol{R} = \widetilde{\boldsymbol{Q}} + \frac{1}{n} \mathbf{e} \tilde{\boldsymbol{d}}^{\boldsymbol{T}},
\end{equation}
where $\tilde{\boldsymbol{d}} \in \mathbb{R}^{n^{2}}$ is the vector composed of the entries arising from the dangling states of $\widetilde{\boldsymbol{Q}}$.

%For the multilinear PageRank problem, current main popular algorithms are following methods:
%\begin{itemize}[leftmargin=3em]
%\item MNM \cite{Guo2018A}: Guo et al. proposed a modified Newton method to improve the Newton method.
%\item Pb-N \cite{Meini2018Perron}: Meini and Poloni suggested Perron-based algorithms in order to solve the problem that it is difficult to converge to a stochastic solution in the Newton method when $\alpha \approx 1$.
%\item ESFPM, EIOM \cite{Cipolla2019Extrapolation}:Cipolla  et al. took advantage of extrapolation skills to accelerate SFPM and IOM, namely ESFPM and EIOM based on the simplified topological $\varepsilon$-algorithm in its restarted form.
%\end{itemize}
%And for large-scale problems, ESFPM and EIOM is more popular, which are both designed under

Given $\mathbf{x}^{(0)} \in \mathbb{R}_{+}^{n}$ with $\|\mathbf{x}^{(0)}\|_{1} = 1$, the following sequence
\begin{equation}\label{6.2}
\mathbf{x}^{(q+1)} = \alpha \boldsymbol{R} (\mathbf{x}^{(q)} \otimes \mathbf{x}^{(q)}) + (1-\alpha)\mathbf{v},\quad q=0,1,\ldots,
\end{equation}
is the key step of the fixed-point iteration method proposed in \cite{Gleich2014Multilinear} for multilinear PageRank. The fixed-point iteration method is popular for the computation of the multilinear PageRank problem. For instance, based on the shifted fixed-point iteration, Cipolla {\it et al.} \cite{Cipolla2019Extrapolation} investigate restarted extrapolation to speed up this method. Meini {\it et al.} \cite{Meini2018Perron} point that an interpretation of the solution as a Perron eigenvector can allow us to devise new fixed-point algorithms for its computation.
%make use of an interpretation of the solution as a Perron eigenvector to devise new fixed-point algorithms for its computation.
Recently, the convergence of the fixed-point iteration is revisited in \cite{HW}.

In \cite{Cipolla2019Extrapolation}, Cipolla {\it et al.} applied extrapolation skills to the shifted fixed-point iteration (SFPM) and the inner-outer iteration (IOM) proposed in \cite{Gleich2014Multilinear}, and presented the ESFPM and EIOM methods relied on the simplified topological $\varepsilon$-algorithm in its restarted form.
As the framework of the two compared algorithms ESFPM and EIOM is based on the computation of \eqref{6.2} during iterations, we focus on
computing $\mathbf{x}^{(q+1)}$ for a given vector $\mathbf{x}^{(q)}$ in this subsection.

First, we consider how to compute $\mathbf{x}^{(q+1)}$ without storing the vector $\tilde{\boldsymbol{d}}$.
By \eqref{eqn66}, the relation \eqref{6.2} can be reformulated as
\begin{equation}\label{6.4}
\mathbf{x}^{(q+1)} = \alpha \widetilde{\boldsymbol{Q}} (\mathbf{x}^{(q)} \otimes \mathbf{x}^{(q)}) + \frac{\alpha}{n} \mathbf{e} \tilde{\boldsymbol{d}}^{\boldsymbol{T}} (\mathbf{x}^{(q)} \otimes \mathbf{x}^{(q)}) + (1-\alpha)\mathbf{v},
\end{equation}
which is a stochastic vector.
Multiplying on both sides of \eqref{6.4} by $\mathbf{e}^{\boldsymbol{T}}$, we get
\begin{equation*}
1 = \alpha \|\widetilde{\boldsymbol{Q}} (\mathbf{x}^{(q)} \otimes \mathbf{x}^{(q)})\|_{1} + \alpha \tilde{\boldsymbol{d}}^{\boldsymbol{T}} (\mathbf{x}^{(q)} \otimes \mathbf{x}^{(q)}) + (1-\alpha),
\end{equation*}
and
\begin{equation}\label{6.5}
\tilde{\boldsymbol{d}}^{\boldsymbol{T}} (\mathbf{x}^{(q)} \otimes \mathbf{x}^{(q)}) = 1 - \|\widetilde{\boldsymbol{Q}} (\mathbf{x}^{(q)} \otimes \mathbf{x}^{(q)})\|_{1}.
\end{equation}
A combination of \eqref{6.5} and \eqref{6.4} gives
\begin{equation}\label{6.6}
\mathbf{x}^{(q+1)} = \alpha \widetilde{\boldsymbol{Q}} (\mathbf{x}^{(q)} \otimes \mathbf{x}^{(q)}) + \frac{\alpha}{n} (1 - \|\widetilde{\boldsymbol{Q}} (\mathbf{x}^{(q)} \otimes \mathbf{x}^{(q)}\|_{1}) \mathbf{e} + (1-\alpha)\mathbf{v}.
\end{equation}
%Therefore, we also can take advantage of the information of the vector $\tilde{\boldsymbol{d}}$ implicitly.

%\uppercase\expandafter{\romannumeral2}.
Second, we consider how to calculate the matrix-vector product $\widetilde{\boldsymbol{Q}} (\mathbf{x}^{(q)} \otimes \mathbf{x}^{(q)})$
without forming and storing the vector $\mathbf{x}^{(q)} \otimes \mathbf{x}^{(q)}$ of size $n^2$.
Partition the matrix $\widetilde{\boldsymbol{Q}}$ into the following form
\begin{equation}\label{eqn68}
\widetilde{\boldsymbol{Q}} = [\widetilde{\boldsymbol{Q}}_{1}, \widetilde{\boldsymbol{Q}}_{2},\ldots,\widetilde{\boldsymbol{Q}}_{n}],
\end{equation}
where $\widetilde{\boldsymbol{Q}}_{j} \in \mathbb{R}^{n \times n}, j=1,2,\ldots,n$.
Let $\mathbf{x}^{(q)} = [\mathbf{x}^{(q)}_{1}, \mathbf{x}^{(q)}_{2}, \ldots, \mathbf{x}^{(q)}_{n}]^{\boldsymbol{T}}$, where $\mathbf{x}^{(q)}_{i}$ is the $i$-th element of $\mathbf{x}^{(q)}$.
Notice that
\begin{equation*}
\mathbf{x}^{(q)} \otimes \mathbf{x}^{(q)} =
\left[\begin{array}{c}
\mathbf{x}^{(q)}_{1}\mathbf{x}^{(q)}\\
\mathbf{x}^{(q)}_{2}\mathbf{x}^{(q)}\\
\vdots\\
\mathbf{x}^{(q)}_{n}\mathbf{x}^{(q)}
\end{array}\right],
\end{equation*}
where $\mathbf{x}^{(q)}_{j}\mathbf{x}^{(q)} \in \mathbb{R}^{n},~ j=1,2,\ldots,n$.
Thus,
\begin{equation}\label{6.7}
\widetilde{\boldsymbol{Q}} (\mathbf{x}^{(q)} \otimes \mathbf{x}^{(q)}) = [\widetilde{\boldsymbol{Q}}_{1}, \widetilde{\boldsymbol{Q}}_{2},\ldots,\widetilde{\boldsymbol{Q}}_{n}]
\left[\begin{array}{c}
\mathbf{x}^{(q)}_{1}\mathbf{x}^{(q)}\\
\mathbf{x}^{(q)}_{2}\mathbf{x}^{(q)}\\
\vdots\\
\mathbf{x}^{(q)}_{n}\mathbf{x}^{q}
\end{array}\right]
= \sum_{j = 1}^{n} \mathbf{x}^{(q)}_{j} \widetilde{\boldsymbol{Q}}_{j} \mathbf{x}^{(q)}.
\end{equation}
Substituting \eqref{6.7} into \eqref{6.5} gives
\begin{equation}
\mathbf{x}^{(q+1)} = \alpha \sum_{j = 1}^{n} \mathbf{x}^{(q)}_{j} \widetilde{\boldsymbol{Q}}_{j} \mathbf{x}^{(q)} + \frac{\alpha}{n} (1 - \|\sum_{j = 1}^{n} \mathbf{x}^{(q)}_{j} \widetilde{\boldsymbol{Q}}_{j} \mathbf{x}^{(q)}\|_{1}) \mathbf{e} + (1-\alpha)\mathbf{v}.
\end{equation}
In the numerical experiments, we perform \eqref{6.2} in the above way. One merit is that one only needs to form and store the sparse matrices $\widetilde{\boldsymbol{Q}}_{j},j=1,2,\ldots,n$, instead of the dense matrix $\boldsymbol{R}$.

\subsection{Experiments on real-world data set}\label{sec-real}
In this section, we run the algorithms on a real-world data set.
This data is the {\tt .GOV Web collection} arising in 2002 TREC \cite{voorhees2005trec}, with $n=100000$ web pages being linked by 39255 anchor terms \footnote{We thank Dr. Weiyang Ding for providing us with the data set and the MATLAB codes of the truncated power method.}.
We can describe the web link information in this real-world data through a $100000 \times 100000 \times 39255$ tensor with elements
$$
\mathcal{A}(i, j, s)=\left\{\begin{array}{ll}1, & \text { if webpage } i \text { links to webpage } j \text { via anchor term } s, \\ 0, & \text { otherwise }, \end{array}\right.
$$
whose sparsity is about $1.22 \times 10^{-7}$ \%.

Recently, Wu et al. \cite{WuYubao} proposed to formulate second-order Markov chains by using edge-to-edge transition probabilities, and developed Monte Carlo methods to compute stationary probability distributions.
However, Ding {\it et al.} \cite{Ding2018Fast} point that this model has some defects.
In contrast, the model constructed in \cite{Ding2018Fast} might be able to apply directly to the higher-order Markov chains, whereas the edge-to-edge formulation would not be straightforward.
For the second-order Markov chain, we have \cite{Ding2018Fast}
\begin{equation*}
\begin{aligned}
& \operatorname{Pr}\left(Y_{t}=i \mid Y_{t-1}=j, Y_{t-2}=k\right)\\
&  = \sum_{s=1}^{m} \operatorname{Pr}\left(Y_{t}=i \mid N=s, Y_{t-1}=j\right) \cdot \operatorname{Pr}\left(N=s \mid Y_{t-1}=j, Y_{t-2}=k\right),
\end{aligned}
\end{equation*}
and the elements of the tensor $\mathcal{Q}$ turn out to be
\begin{equation*}
\mathcal{Q}(i, j, k)=\sum_{s=1}^{m}\left(\frac{\mathcal{A}(j, i, s)}{\sum_{i=1}^{n} \mathcal{A}(j, i, s)}\right) \cdot\left(\frac{\mathcal{A}(k, j, s)}{\sum_{s=1}^{m} \mathcal{A}(k, j, s)}\right).
\end{equation*}
%{\bf It is obvious that $\sum_{i=1}^{n} \mathcal{Q}(i, j, k)$ may not be equal to 1 for all $j$ and $k$.}
Further, consider the matrices $\boldsymbol{U}_{j} \in \mathbb{R}^{n \times m}$ and $\boldsymbol{W}_{j} \in \mathbb{R}^{m \times n},j=1,2,\ldots,n$, with elements
\begin{equation}\label{r-1}
\boldsymbol{U}_{j}(i, s)=\frac{\mathcal{A}(j, i, s)}{\sum_{i=1}^{n} \mathcal{A}(j, i, s)} \quad \text { and } \quad \boldsymbol{W}_{j}(s, k)=\frac{\mathcal{A}(k, j, s)}{\sum_{s=1}^{m} \mathcal{A}(k, j, s)},
\end{equation}
respectively, then $\mathcal{Q}(i,j,k)$ can be reformulated as
\begin{equation*}
\mathbf{Q}_{j}(i, k)=\sum_{s=1}^{m} \boldsymbol{U}_{j}(i, s) \boldsymbol{W}_{j}(s, k),
\end{equation*}
which is the $(i,k)$-th element of the matrix
\begin{equation}\label{eqn611}
\mathbf{Q}_{j} = \boldsymbol{U}_{j} \boldsymbol{W}_{j},\quad j=1,2,\ldots,n.
\end{equation}
Indeed, the real data we get is not the tensor $\mathcal{A}$, but the position of non-zero elements in $\mathcal{A}$.
Therefore, in practical calculations, we only calculate $\{\boldsymbol{U}_{j}\}_{j=1}^{n}$ and $\{\boldsymbol{W}_{j}\}_{j=1}^{n}$ by using the information of the positions of non-zero elements of $\mathcal{A}$.
If $\mathcal{A}(j,i,s) = 0$, we just set $\boldsymbol{U}_{j}(i,s) = 0$, otherwise, we calculate $\boldsymbol{U}_{j}(i,s)$ by using \eqref{r-1}.
The computation of $\{\boldsymbol{W}_{j}\}_{j=1}^{n}$ is in a similar way. Thus, it is only necessary to form the matrix
\begin{equation}\label{eqn613}
\mathbf{Q} = [\mathbf{Q}_{1}, \mathbf{Q}_{2}, \ldots, \mathbf{Q}_{n}]
\end{equation}
for the high-order PageRank problem.
%So we can see that the matrices $\{\mathbf{Q}_{j}\}_{j=1}^{n}$ have an explicit matrix-vector product form. %which makes it easier to get $\mathbf{Q}$ than $\widetilde{\boldsymbol{Q}}$.
%Thus we need to exchange columns of $\mathbf{Q}$ to acquire $\widetilde{\boldsymbol{Q}}$ in practice.

For the multilinear PageRank problem, we need to form the matrices $\{\widetilde{\boldsymbol{Q}}_{i}\}_{i=1}^n$; see \eqref{eqn68}.
However, it seems that there are no explicit formulae such as \eqref{eqn611} for $\widetilde{\boldsymbol{Q}}_i$. This is another advantage of
the higher-order PageRank model over the multilinear PageRank model.
There are two ways to settle this problem. The first one is to form the tensor $\mathcal{Q}$ directly, and then flatten it along the first index.
Indeed, it is seen that the two matrices $\widetilde{\boldsymbol{Q}}$ and $\mathbf{Q}$ are different only in the order of columns. Thus, the second way is to form $\mathbf{Q}$ firstly, and then get $\widetilde{\boldsymbol{Q}}$ by permutation.
In general, the second option will be much cheaper than the first one.
However, the second option is also prohibitive for large-scale problems.
For example, it took more than 24 hours to tansform the matrix $\mathbf{Q}$ to $\widetilde{\boldsymbol{Q}}$ in this experiment.
Therefore, in this experiment, we do not consider the multilinear PageRank problem for this real-world data set any more.

We run the power method, Algorithm \ref{Alg2} due to Ding {\it et al.} \cite{Ding2018Fast}, as well as the proposed Algorithm \ref{Alg3}--Algorithm \ref{Alg6} for this problem.
The damping factor is chosen as $\alpha=0.85, 0.90, 0.95, 0.99$, and the parameter $\beta$ for sparsing is chosen as $\frac{1}{n^{2}},\frac{1}{n^{3}}$ and $\frac{1}{n^{4}}$, respectively.
In Algorithm \ref{Alg2}, the matrix $\mathbf{G}$ is generated by using the MATLAB command {\tt sprand}.
As the sparsity of the tested matrix $\mathbf{Q}$ is about $1.84 \times 10^{-8} \%$, the sparsity of $\mathbf{G}$ is also chosen as $1.84 \times 10^{-8} \%$.
In Algorithm \ref{Alg5} and Algorithm \ref{Alg6}, we choose $\varsigma = 10$, $\tau = 0.1$ and $\tilde{\ell} = 1$.
The stopping criterion is
$$
\frac{\|\boldsymbol{X}^{(q+1)} - \boldsymbol{X}^{(q)}\|_{l_{1}}}{\| \boldsymbol{X}^{(q)}\|_{l_{1}}} \leq 1 \times 10^{-6}
$$
for all the algorithms.
We set the maximum number of iterations to be $20$.
So as to access the quality of the approximations, we denote by ``Error" the relative error between the ``computed" solution $\boldsymbol{\widetilde{X}}$ from the algorithms and the ``exact" solution $\boldsymbol{X}^{*}$:
$$
{\rm Error}=\frac{\|\boldsymbol{\widetilde{X}} - \boldsymbol{X}^{*} \|_{l_{1}}}{\|\boldsymbol{X}^{*}\|_{l_{1}}},
$$
where the ``exact" solution is obtained from running the power method on the original high-order PageRank problem \eqref{3.2} with stopping criterion $tol=10^{-6}$.
The numerical results of this experiment are shown in Appendix \ref{AppA}; see Table \ref{t-6}--Table \ref{t-7}, in which ``Iter" denotes the number of iterations, and ``CPU" represents the CPU time in seconds. Here the computation time is composed of that both for generating the matrices $\{\mathbf{Q}_{j}\}_{j=1}^{n}$ and for solving the high-order PageRank problem.

Some comments are in order.
First, we observe from Table \ref{t-6}--Table \ref{t-90} that the four proposed algorithms converge much faster than Algorithm \ref{Alg2}.
For instance, when $\alpha = 0.85$, Algorithm \ref{Alg3} and Algorithm \ref{Alg4} are about 75\% faster than Algorithm \ref{Alg2}, while Algorithm \ref{Alg5} and Algorithm \ref{Alg6} are about 90\% faster than Algorithm \ref{Alg2}.
Moreover, the superiority become more obvious as  the damping factor $\alpha$ increases. In particular, when $\alpha=0.99$, we see that the power method, Algorithm \ref{Alg2}, Algorithm \ref{Alg3} and Algorithm \ref{Alg4} fail to work, while Algorithm \ref{Alg5} and Algorithm \ref{Alg6} run quite well.
Second, the accuracy of the proposed algorithms is much higher than that of Algorithm \ref{Alg2}. More precisely, the approximations obtained from Algorithm \ref{Alg3} and Algorithm \ref{Alg4} are about five orders higher, while those from Algorithm \ref{Alg5} and Algorithm \ref{Alg6} are about four orders higher than Algorithm \ref{Alg2}. Specifically, the accuracy of the approximations obtained from Algorithm \ref{Alg4} is the highest. This is because that it solves the original higher-order PageRank model directly.

Third, it seems that the convergence speed of Algorithm \ref{Alg5} and Algorithm \ref{Alg6} has little to do with the damping factor $\alpha$. Indeed, in the two algorithms, the updated columns will be fewer and fewer as the iteration proceeds, and we benefit from the partial updating strategy.
Fourth, in order to demonstrate the effectiveness of the sparse methods with partial updating in more detail, we present in
Table \ref{t-7} the number of the top-ten ranking web pages obtained from the power method and Algorithm \ref{Alg2}--Algorithm \ref{Alg6}, respectively.
Although the order of ranking changes a little, it is seen that the top-ten web pages from the six algorithms are about the same, and the partial updating strategy proposed is favorable.

\subsection{Experiments on synthetic data set}\label{AD}
In this section, we show performance of the proposed algorithms on a synthetic data set that is generated randomly \cite{Ding2018Fast}.
Before running the algorithms, we stress that even for medium-sized problems, the cost of transforming the matrix $\mathbf{Q}$ (for high-order PageRank) to the matrix $\widetilde{\boldsymbol{Q}}$ (for multilinear PageRank) can be very high; refer to \eqref{eqn613} and \eqref{eqn68}.
In this subsection, the test matrices $\mathbf{Q}$ are constructed by using the MATLAB build-in function
\begin{equation}\label{eqn614}
\mathbf{Q}=sprand(n, n^{2}, sparsity).
\end{equation}
In the first experiment, we choose the size $n = 2000, 4000, 6000, 8000, 10000$, and $sparsity = 0.0001\%$.
Table \ref{t-1} shows the CPU time in seconds for transforming $\mathbf{Q}$ to $\widetilde{\boldsymbol{Q}}$. It is seen that the CPU time is overwhelming when $n$ is large.

\begin{table}[h]\caption{The CPU time for transforming $\mathbf{Q}$ to $\widetilde{\boldsymbol{Q}}$}\label{t-1}
\centering
\begin{tabular}{ccc}
  \toprule
  Sparsity of $\mathbf{Q}$ & $n$ & CPU (s)\\
  \midrule
  \multirow{5}{*}{0.0001\%}
  & 2000 & 11.57 \\
  & 4000 & 485.75 \\
  & 6000 & 3755.05 \\
  & 8000 & 15022.28 \\
  & 10000 & 48989.85 \\
  \bottomrule
\end{tabular}
\end{table}

%\subsubsection{Time for matrix conversion}

%The size of $\mathbf{Q}$ is $n \times n^{2}$ ($n = 2000, 4000, 6000, 8000, 10000$) and their sparsity are all 0.0001\%, which are constructed randomly by ``sprand'' in MATLAB.

%In a second-order Markov chain, if we let $\mathbf{Q} = [\mathbf{Q}_{1}, \mathbf{Q}_{2}, \ldots, \mathbf{Q}_{n}]$, then we can discover that the difference between the matrix $\widetilde{\boldsymbol{Q}}$ and $\mathbf{Q}$ is only the order of columns.
%In fact, we can not form the tensor $\mathcal{Q}$ explicitly in practical application, we only need to form $\mathbf{Q}$ because it is easy to get.
%The specific details will be explained in section \ref{sec-real}.
%Therefore, if we want to solve the multilinear PageRank problem, then we have to form $\mathbf{Q}$ first and then get $\widetilde{\boldsymbol{Q}}$ by switching columns of $\mathbf{Q}$.
%However, even if we encounter easy case, this process will take up double memory and consume a large amount of time, let alone hard case.

%\subsubsection{Comparison of algorithm performance}

In the second experiment, we choose $n = 10000$ and $sparsity = 0.0001\%$  and $0.00001\%$ in \eqref{eqn614}, respectively.
We compare Algorithm \ref{Alg3}--Algorithm \ref{Alg6} with eight algorithms, i.e., the power method and Algorithm \ref{Alg2} for high-order PageRank, and the following six algorithms for multilinear PageRank: %problem (MNM, Pb-N, ESFPM, EIOM) and the algorithms for the higher-order PageRank problem (Algorithm 1, Algorithm 3, Algorithm 4, Algorithm5, Algorithm 6).
\begin{itemize}[leftmargin=3em]
\item MNM \cite{Guo2018A}: The modified Newton method due to Guo {\it et al}.
\item Pb-Ns (Algorithm 2, Algorithm 3 and Algorithm 4 in \cite{Meini2018Perron}): The three Perron-based algorithms for multilinear PageRank due to Meini and Poloni.
\item ESFPM and EIOM (Algorithm 1 in \cite{Cipolla2019Extrapolation}): Two algorithms due to Cipolla {\it et al.}, which are based on the simplified topological $\varepsilon$-algorithm in their restarted form, to accelerate the SFPM and IOM algorithms.
\end{itemize}
In ESFPM and EIOM, we take the number of inner iterations as 2, and the parameter $\gamma$ is chosen as 1.
In Algorithm \ref{Alg5} and Algorithm \ref{Alg6}, we choose $\varsigma = 10$, $\tau = 0.1$ and $\tilde{\ell} = 1$.
%This experiment is run on a 16 cores double Intel(R) Xeon(R) CPU E5-2620 v4, and with CPU 2.10 GHz and RAM 64 GB. The operation system is 64-bit Windows 10, and the numerical results are obtained from running the MATLAB R2018b.

Indeed, for synthetic data set, one can also generate $\widetilde{\boldsymbol{Q}}$ first, and then get $\mathbf{Q}$ by permutation.
For the sake of justification, we do not consider the time for matrix transformation any more, and the CPU time (in seconds) in the following tables only includes that for solving the high-order PageRank problem or the multilinear PageRank problem.

For the multilinear PageRank and the higher-order PageRank problems, we take advantage of
$$
\frac{\|\mathbf{x}^{(q+1)} - \mathbf{x}^{(q)}\|_{1}}{\|\mathbf{x}^{(q)}\|_{1}} \leq 1 \times 10^{-8}\quad{\rm and}\quad\frac{\|\boldsymbol{X}^{(q+1)} - \boldsymbol{X}^{(q)}\|_{l_{1}}}{\|\boldsymbol{X}^{(q)}\|_{l_{1}}} \leq 1\times 10^{-8}
$$
as the stopping criterion, respectively, where $\mathbf{x}^{(q)}$ and $\boldsymbol{X}^{(q)}$ stand for the approximations obtained from the $q$-th iteration of the algorithms, respectively.
%In the higher-order PageRank problem, we take
%$$
%\frac{\|\boldsymbol{X}^{(q+1)} - \boldsymbol{X}^{(q)}\|_{l_{1}}}{\|\boldsymbol{X}^{(q)}\|_{l_{1}}} \leq 1\times 10^{-8}.
%$$
%as the stopping criterion, where $\boldsymbol{X}^{(q)}$ denotes the approximation obtained from the $t$-th iteration.
Recall that we only need to store a sparse matrix $\boldsymbol{S}$ and a vector $\boldsymbol{u}$ in Algorithm \ref{Alg2}--Algorithm \ref{Alg6}, rather than an $n$-by-$n$ dense matrix.

Similarly, we denote by ``Error" the relative error between the ``computed" solution $\widetilde{\boldsymbol{X}}$ (which is $\widetilde{\boldsymbol{x}}\widetilde{\boldsymbol{x}}^T$ for multilinear PageRank problem) from the algorithms and the ``exact" solution $\boldsymbol{X}^{*}$:
$$
{\rm Error}=\frac{\|\widetilde{\boldsymbol{X}} - \boldsymbol{X}^{*} \|_{l_{1}}}{\|\boldsymbol{X}^{*}\|_{l_{1}}},
$$
where the ``exact" solution is obtained from running the power method on the original high-order PageRank problem \eqref{3.2} with stopping criterion $tol=10^{-12}$. The numerical results of the experiment are shown in Appendix \ref{AppB}; see Table \ref{t-2}--Table \ref{t-5}. In this experiment, we choose the damping factors as $\alpha=0,85,0.90,0.95,0.99$, and the parameters for sparsing as $\beta=\frac{1}{n^{2}},\frac{1}{n^{3}},\frac{1}{n^{4}}$, respectively.

%Further, as $\beta$ is small to a certain extent, the sparsity of $\boldsymbol{S}$ will remain stable. Indeed, this can be interpreted by using the thresholding strategy in Algorithm \ref{Alg10}.

Again, it is seen from Table \ref{t-2}--Table \ref{t-5} that Algorithm \ref{Alg3}--Algorithm \ref{Alg6} outperform Algorithm \ref{Alg2} both in terms of CPU time and accuracy. In particular, Algorithm \ref{Alg4} is the best in terms of accuracy, while Algorithm \ref{Alg6} converges the fastest as $sparsity=0.00001\%$.
All these show the advantages of our proposed strategies.
It is observed that ESFPM and EIOM can be faster than Algorithm \ref{Alg3}--Algorithm \ref{Alg4}. However, the accuracy of the two algorithms are poorer, which demonstrates the superiority of the high-order PageRank model over the multilinear PageRank model.
Moreover, we see that the MNM and the Pb-Ns algorithms do not work at all for this problem. Indeed, these methods are based on the Newton method, and they suffer from the difficulty of ``out of memory". Thus, they are
not suitable for large-scale problems.

\section{Conclusion}\label{sec7}
 
One of the important applications of high-order Markov
chain is high-order PageRank.
However, one has to store a {\it dense} higher-order PageRank tensor in the higher-order PageRank problem, which is prohibitive for large-scale problems.
Multilinear PageRank is an alternative to higher-order PageRank, however, unlike the higher-order PageRank problem, the existence and uniqueness of multilinear PageRank vector is not guaranteed theoretically, and the approximation of the multilinear PageRank vector to the high-order PageRank tensor can be poor in practice.
Thus, it is recommended to solve the high-order PageRank problem instead of the multilinear PageRank problem, if both the computational overhead and storage requirements can be reduced significantly.

The truncated power method approximates large-scale solutions of sparse Markov chains by
the sparsity component and the rank-one component. However, the accuracy of the approximation and the efficiency of this method may not be satisfactory.
In this work, we aim to accelerate the truncated power method for high-order PageRank. In our proposed methods, there is no need to form and store the vectors arising from the dangling states, and no need to store an auxiliary matrix. Moreover, we propose to speed up the computation by using a partial updating technique. Thus, the computational overhead can be reduced significantly. The convergence of the proposed methods are considered.

There are still some work needs to investigate further. For instance, although we proved that the sparse power method and the sparse power method with partial updating converge with high probability, the convergence of the two methods deserves further investigation. On the other hand, higher-order PageRank problem (say, $m\geq 4$) can be reduced to several third-order PageRank problems theoretically. For example, a fourth-order PageRank problem can be rewritten as $n$ third-order PageRank problems, however, the workload will be terrible if we solve it in such a way. Whether some
randomized algorithms can be applied is worth studying, and it is definitely a part of our future work.

\appendix
\section{Numerical results of Subsection \ref{sec-real} on real-world data set}\label{AppA}
\begin{table}[H]\caption{A comparison of the six algorithms for high-order PageRank on .GOV Web collection, $\alpha=0.85$, and the parameter $\beta$ for sparsing is chosen as $\frac{1}{n^{2}},\frac{1}{n^{3}}$ and $\frac{1}{n^{4}}$, respectively. }\label{t-6}
\centering
\resizebox{\textwidth}{37mm}{
\begin{tabular}{cccccc}
  \toprule
  \multicolumn{2}{c}{Algorithm} & Iter & CPU (s) & Error & Sparsity of $\boldsymbol{S}$ \\
  \midrule
  \multicolumn{2}{c}{The power method} & 7 & 7018.56 & --- & --- \\   %\multicolumn{4}{c}{Out of memory}
  \midrule
  \multirow{3}{*}{Algorithm \ref{Alg2} \cite{Ding2018Fast}}
  & $\beta = \frac{1}{n^{2}}$ & 4 & 32074.85 & $3.00 \times 10^{-1}$ & $2.94 \times 10^{-7}$ \\
  & $\beta = \frac{1}{n^{3}}$ & 6 & 50889.89 & $3.00 \times 10^{-1}$ & $4.11 \times 10^{-7}$ \\
  & $\beta = \frac{1}{n^{4}}$ & 6 & 50381.11 & $3.00 \times 10^{-1}$ & $4.11 \times 10^{-7}$ \\
  \midrule
  \multirow{3}{*}{Algorithm \ref{Alg3}}
  & $\beta = \frac{1}{n^{2}}$ & 5 & 7219.40 & $4.55 \times 10^{-6}$ & $3.00 \times 10^{-7}$ \\
  & $\beta = \frac{1}{n^{3}}$ & 7 & 12200.94 & $1.47 \times 10^{-6}$ & $4.11 \times 10^{-7}$ \\
  & $\beta = \frac{1}{n^{4}}$ & 7 & 12349.04 & $1.47 \times 10^{-6}$ & $4.11 \times 10^{-7}$ \\
  \midrule
  \multirow{3}{*}{Algorithm \ref{Alg5}}
  & $\beta = \frac{1}{n^{2}}$ & 2 & 2995.40 & $9.29 \times 10^{-6}$ & $2.98 \times 10^{-7}$ \\
  & $\beta = \frac{1}{n^{3}}$ & 3 & 3160.03 & $7.69 \times 10^{-6}$ & $4.11 \times 10^{-7}$ \\
  & $\beta = \frac{1}{n^{4}}$ & 3 & 2767.63 & $7.69 \times 10^{-6}$ & $4.11 \times 10^{-7}$ \\
  \midrule
  \multirow{3}{*}{Algorithm \ref{Alg4}}
  & $\beta = \frac{1}{n^{2}}$ & 6 & 7824.64 & $2.72 \times 10^{-6}$ & $3.00 \times 10^{-7}$ \\
  & $\beta = \frac{1}{n^{3}}$ & 7 & 11879.17 & $9.90 \times 10^{-7}$ & $4.11 \times 10^{-7}$ \\
  & $\beta = \frac{1}{n^{4}}$ & 7 & 13764.39 & $9.90 \times 10^{-7}$ & $4.11 \times 10^{-7}$ \\
  \midrule
  \multirow{3}{*}{Algorithm \ref{Alg6}}
  & $\beta = \frac{1}{n^{2}}$ & 3 & 4172.94 & $8.43 \times 10^{-6}$ & $2.98 \times 10^{-7}$ \\
  & $\beta = \frac{1}{n^{3}}$ & 4 & 4127.38 & $6.57 \times 10^{-6}$ & $4.11 \times 10^{-7}$ \\
  & $\beta = \frac{1}{n^{4}}$ & 4 & 4143.98 & $6.57 \times 10^{-6}$ & $4.11 \times 10^{-7}$ \\
  \bottomrule
\end{tabular}}
\end{table}

\begin{table}[H]\caption{A comparison of the six algorithms for high-order PageRank on .GOV Web collection, $\alpha=0.90$, and the parameter $\beta$ for sparsing is chosen as $\frac{1}{n^{2}},\frac{1}{n^{3}}$ and $\frac{1}{n^{4}}$, respectively. }\label{t-70}
\centering
\resizebox{\textwidth}{37mm}{
\begin{tabular}{cccccc}
  \toprule
  \multicolumn{2}{c}{Algorithm} & Iter & CPU (s) & Error & Sparsity of $\boldsymbol{S}$ \\
  \midrule
  \multicolumn{2}{c}{The power method} & 10 & 8858.75 & --- & --- \\   %\multicolumn{4}{c}{Out of memory}
  \midrule
  \multirow{3}{*}{Algorithm \ref{Alg2} \cite{Ding2018Fast}}
  & $\beta = \frac{1}{n^{2}}$ & 6 & 41782.85 & $2.00 \times 10^{-1}$ & $2.97 \times 10^{-7}$ \\
  & $\beta = \frac{1}{n^{3}}$ & 9 & 76205.63 & $2.00 \times 10^{-1}$ & $4.11 \times 10^{-7}$ \\
  & $\beta = \frac{1}{n^{4}}$ & 9 & 76309.26 & $2.00 \times 10^{-1}$ & $4.11 \times 10^{-7}$ \\
  \midrule
  \multirow{3}{*}{Algorithm \ref{Alg3}}
  & $\beta = \frac{1}{n^{2}}$ & 8 & 9866.03 & $5.61 \times 10^{-6}$ & $3.00 \times 10^{-7}$ \\
  & $\beta = \frac{1}{n^{3}}$ & 9 & 16576.04 & $1.51 \times 10^{-6}$ & $4.11\times 10^{-7}$ \\
  & $\beta = \frac{1}{n^{4}}$ & 9 & 15220.57 & $1.51 \times 10^{-6}$ & $4.11\times 10^{-7}$ \\
  \midrule
  \multirow{3}{*}{Algorithm \ref{Alg5}}
  & $\beta = \frac{1}{n^{2}}$ & 3 & 3183.62 & $1.37 \times 10^{-5}$ & $2.98 \times 10^{-7}$ \\
  & $\beta = \frac{1}{n^{3}}$ & 4 & 3169.19 & $1.18 \times 10^{-5}$ & $4.11\times 10^{-7}$ \\
  & $\beta = \frac{1}{n^{4}}$ & 4 & 3125.31 & $1.18 \times 10^{-5}$ & $4.11\times 10^{-7}$ \\
  \midrule
  \multirow{3}{*}{Algorithm \ref{Alg4}}
  & $\beta = \frac{1}{n^{2}}$ & 8 & 9280.87 & $4.59 \times 10^{-6}$ & $3.00 \times 10^{-7}$ \\
  & $\beta = \frac{1}{n^{3}}$ & 10 & 18895.82 & $9.84 \times 10^{-7}$ & $4.11\times 10^{-7}$ \\
  & $\beta = \frac{1}{n^{4}}$ & 10 & 19130.43 & $9.84 \times 10^{-7}$ & $4.11\times 10^{-7}$ \\
  \midrule
  \multirow{3}{*}{Algorithm \ref{Alg6}}
  & $\beta = \frac{1}{n^{2}}$ & 3 & 4056.69 & $1.35 \times 10^{-5}$ & $2.98 \times 10^{-7}$ \\
  & $\beta = \frac{1}{n^{3}}$ & 4 & 4135.48 & $1.15\times 10^{-5}$ & $4.11\times 10^{-7}$ \\
  & $\beta = \frac{1}{n^{4}}$ & 4 & 4010.36 & $1.15\times 10^{-5}$ & $4.11\times 10^{-7}$ \\
  \bottomrule
\end{tabular}}
\end{table}

\begin{table}[H]\caption{A comparison of the six algorithms for high-order PageRank on .GOV Web collection, $\alpha=0.95$, and the parameter $\beta$ for sparsing is chosen as $\frac{1}{n^{2}},\frac{1}{n^{3}}$ and $\frac{1}{n^{4}}$, respectively. }\label{t-80}
\centering
\begin{tabular}{cccccc}
  \toprule
  \multicolumn{2}{c}{Algorithm} & Iter & CPU (s) & Error & Sparsity of $\boldsymbol{S}$ \\
  \midrule
  \multicolumn{2}{c}{The power method} & 18 & 15683.21 & --- & --- \\   %\multicolumn{4}{c}{Out of memory}
  \midrule
  \multirow{3}{*}{Algorithm \ref{Alg2} \cite{Ding2018Fast}}
  & $\beta = \frac{1}{n^{2}}$ & 14 & 84244.66 & $1.00 \times 10^{-1}$ & $3.00 \times 10^{-7}$ \\
  & $\beta = \frac{1}{n^{3}}$ & 17 & 154539.41 & $1.00 \times 10^{-1}$ & $4.11 \times 10^{-7}$  \\
  & $\beta = \frac{1}{n^{4}}$ & 17 & 156132.52 & $1.00 \times 10^{-1}$ & $4.11 \times 10^{-7}$ \\
  \midrule
  \multirow{3}{*}{Algorithm \ref{Alg3}}
  & $\beta = \frac{1}{n^{2}}$ & 15 & 15895.62 & $8.86 \times 10^{-6}$ & $3.00 \times 10^{-7}$ \\
  & $\beta = \frac{1}{n^{3}}$ & 18 & 26673.79 & $1.45 \times 10^{-6}$ & $4.11 \times 10^{-7}$ \\
  & $\beta = \frac{1}{n^{4}}$ & 18 & 31109.39 & $1.45 \times 10^{-6}$ & $4.11 \times 10^{-7}$ \\
  \midrule
  \multirow{3}{*}{Algorithm \ref{Alg5}}
  & $\beta = \frac{1}{n^{2}}$ & 3 & 3070.83 & $2.74 \times 10^{-5}$ & $2.98 \times 10^{-7}$ \\
  & $\beta = \frac{1}{n^{3}}$ & 4 & 3197.02 & $2.52 \times 10^{-5}$ & $4.11 \times 10^{-7}$ \\
  & $\beta = \frac{1}{n^{4}}$ & 4 & 3160.28 & $2.52 \times 10^{-5}$ & $4.11 \times 10^{-7}$ \\
  \midrule
  \multirow{3}{*}{Algorithm \ref{Alg4}}
  & $\beta = \frac{1}{n^{2}}$ & 16 & 17089.03 & $6.92 \times 10^{-6}$ & $3.00 \times 10^{-7}$ \\
  & $\beta = \frac{1}{n^{3}}$ & 18 & 33024.91 & $9.97 \times 10^{-7}$ & $4.11 \times 10^{-7}$ \\
  & $\beta = \frac{1}{n^{4}}$ & 18 & 33760.48 & $9.98 \times 10^{-7}$ & $4.11 \times 10^{-7}$ \\
  \midrule
  \multirow{3}{*}{Algorithm \ref{Alg6}}
  & $\beta = \frac{1}{n^{2}}$ & 3 & 3164.51 & $2.73 \times 10^{-5}$ & $2.98 \times 10^{-7}$ \\
  & $\beta = \frac{1}{n^{3}}$ & 5 & 3487.51 & $2.44 \times 10^{-5}$ & $4.11 \times 10^{-7}$ \\
  & $\beta = \frac{1}{n^{4}}$ & 5 & 3159.92 & $2.44 \times 10^{-5}$ & $4.11 \times 10^{-7}$ \\
  \bottomrule
\end{tabular}
\end{table}

\begin{table}[H]\caption{A comparison of the six algorithms for high-order PageRank on .GOV Web collection, $\alpha=0.99$, and the parameter $\beta$ for sparsing is chosen as $\frac{1}{n^{2}},\frac{1}{n^{3}}$ and $\frac{1}{n^{4}}$, respectively. }\label{t-90}
\centering
\begin{tabular}{cccccc}
  \toprule
  \multicolumn{2}{c}{Algorithm} & Iter & CPU (s) & Error & Sparsity of $\boldsymbol{S}$ \\
  \midrule
  \multicolumn{2}{c}{The power method} & --- & --- & --- & --- \\   %\multicolumn{4}{c}{Out of memory}
  \midrule
  \multirow{3}{*}{Algorithm \ref{Alg2} \cite{Ding2018Fast}}
  & $\beta = \frac{1}{n^{2}}$ & --- & --- & --- & --- \\
  & $\beta = \frac{1}{n^{3}}$ & --- & --- & --- & --- \\
  & $\beta = \frac{1}{n^{4}}$ & --- & --- & --- & --- \\
  \midrule
  \multirow{3}{*}{Algorithm \ref{Alg3}}
  & $\beta = \frac{1}{n^{2}}$ & --- & --- & --- & --- \\
  & $\beta = \frac{1}{n^{3}}$ & --- & --- & --- & --- \\
  & $\beta = \frac{1}{n^{4}}$ & --- & --- & --- & --- \\
  \midrule
  \multirow{3}{*}{Algorithm \ref{Alg5}}
  & $\beta = \frac{1}{n^{2}}$ & 3 & 3070.83 & --- & $2.98 \times 10^{-7}$ \\
  & $\beta = \frac{1}{n^{3}}$ & 4 & 3197.02 & --- & $4.11 \times 10^{-7}$ \\
  & $\beta = \frac{1}{n^{4}}$ & 4 & 3160.28 & --- & $4.11 \times 10^{-7}$ \\
  \midrule
  \multirow{3}{*}{Algorithm \ref{Alg4}}
  & $\beta = \frac{1}{n^{2}}$ & --- & --- & --- & --- \\
  & $\beta = \frac{1}{n^{3}}$ & --- & --- & --- & --- \\
  & $\beta = \frac{1}{n^{4}}$ & --- & ---& --- & --- \\
  \midrule
  \multirow{3}{*}{Algorithm \ref{Alg6}}
  & $\beta = \frac{1}{n^{2}}$ & 3 & 3164.51 & --- & $2.98 \times 10^{-7}$ \\
  & $\beta = \frac{1}{n^{3}}$ & 5 & 3487.51 & --- & $4.11 \times 10^{-7}$ \\
  & $\beta = \frac{1}{n^{4}}$ & 5 & 3159.92 & --- & $4.11 \times 10^{-7}$ \\
  \bottomrule
\end{tabular}
\end{table}

\begin{table}[H]\caption{The top-ten ranking results of the .GOV Web collection obtained from the five algorithms, $\alpha=0.85$.}\label{t-7}
%\centering
\begin{tabular}{cccccccccccc}
  \toprule
  \multicolumn{2}{c}{\scriptsize{Algorithm}} & \scriptsize{1} & \scriptsize{2} & \scriptsize{3} & \scriptsize{4} & \scriptsize{5} & \scriptsize{6} & \scriptsize{7} & \scriptsize{8} & \scriptsize{9} & \scriptsize{10} \\
  \midrule
  \multicolumn{2}{c}{\scriptsize{The power method}} & \tiny{18281} & \tiny{9063} & \tiny{26369} & \tiny{54330} & \tiny{35984} & \tiny{56276} & \tiny{36701} & \tiny{54688} & \tiny{28454} & \tiny{41219} \\
  \midrule
  \multirow{3}{*}{\scriptsize{Algorithm \ref{Alg2} \cite{Ding2018Fast}}}
  & \footnotesize{$\beta = \frac{1}{n^{2}}$} & \tiny{81473} & \tiny{9063} & \tiny{18281} & \tiny{90580} & \tiny{35984} & \tiny{56276} & \tiny{28454} & \tiny{41219} & \tiny{26369} & \tiny{54330} \\
  & \tiny{$\beta = \frac{1}{n^{3}}$} & \tiny{81473} & \tiny{9063} & \tiny{18281} & \tiny{90580} & \tiny{56276} & \tiny{35984} & \tiny{28454} & \tiny{41219} & \tiny{26369} & \tiny{54330} \\
  & \tiny{$\beta = \frac{1}{n^{4}}$} & \tiny{81473} & \tiny{9063} & \tiny{18281} & \tiny{90580} & \tiny{56276} & \tiny{35984} & \tiny{28454} & \tiny{41219} & \tiny{26369} & \tiny{54330} \\
  \midrule
  \multirow{3}{*}{\scriptsize{Algorithm \ref{Alg3}}}
  & \tiny{$\beta = \frac{1}{n^{2}}$} & \tiny{9063} & \tiny{18281} & \tiny{28454} & \tiny{41219} & \tiny{35984} & \tiny{56276} & \tiny{40593} & \tiny{29493} & \tiny{26369} & \tiny{54330} \\
  & \tiny{$\beta = \frac{1}{n^{3}}$} & \tiny{9063} & \tiny{18281} & \tiny{28454} & \tiny{41219} & \tiny{35984} & \tiny{56276} & \tiny{40593} & \tiny{26369} & \tiny{54330} & \tiny{29493} \\
  & \tiny{$\beta = \frac{1}{n^{4}}$} & \tiny{9063} & \tiny{18281} & \tiny{28454} & \tiny{41219} & \tiny{35984} & \tiny{56276} & \tiny{40593} & \tiny{26369} & \tiny{54330} & \tiny{29493} \\
  \midrule
  \multirow{3}{*}{\scriptsize{Algorithm \ref{Alg5}}}
  & \tiny{$\beta = \frac{1}{n^{2}}$} & \tiny{9063} & \tiny{18281} & \tiny{28056} & \tiny{56276} & \tiny{35984} & \tiny{28454} & \tiny{41219} & \tiny{26369} & \tiny{54330} & \tiny{40593} \\
  & \tiny{$\beta = \frac{1}{n^{3}}$} & \tiny{9063} & \tiny{18281} & \tiny{28454} & \tiny{41219} & \tiny{56276} & \tiny{35984} & \tiny{28056} & \tiny{40593} & \tiny{29493} & \tiny{55752} \\
  & \tiny{$\beta = \frac{1}{n^{4}}$} & \tiny{9063} & \tiny{18281} & \tiny{28454} & \tiny{41219} & \tiny{56276} & \tiny{35984} & \tiny{28056} & \tiny{40593} & \tiny{29493} & \tiny{55752} \\
  \midrule
  \multirow{3}{*}{\scriptsize{Algorithm \ref{Alg4}}}
  & \tiny{$\beta = \frac{1}{n^{2}}$} & \tiny{9063} & \tiny{18281} & \tiny{35984} & \tiny{56276} & \tiny{28454} & \tiny{41219} & \tiny{26369} & \tiny{54330} & \tiny{40593} & \tiny{29493} \\
  & \tiny{$\beta = \frac{1}{n^{3}}$} & \tiny{9063} & \tiny{18281} & \tiny{28454} & \tiny{41219} & \tiny{56276} & \tiny{35984} & \tiny{40593} & \tiny{26369} & \tiny{54330} & \tiny{29493} \\
  & \tiny{$\beta = \frac{1}{n^{4}}$} & \tiny{9063} & \tiny{18281} & \tiny{28454} & \tiny{41219} & \tiny{35984} & \tiny{56276} & \tiny{40593} & \tiny{26369} & \tiny{54330} & \tiny{29493} \\
  \midrule
  \multirow{3}{*}{\scriptsize{Algorithm \ref{Alg6}}}
  & \tiny{$\beta = \frac{1}{n^{2}}$} & \tiny{9063} & \tiny{18281} & \tiny{28454} & \tiny{41219} & \tiny{56276} & \tiny{35984} & \tiny{28056} & \tiny{40593} & \tiny{29493} & \tiny{26369} \\
  & \tiny{$\beta = \frac{1}{n^{3}}$} & \tiny{9063} & \tiny{18281} & \tiny{56276} & \tiny{35984} & \tiny{28454} & \tiny{41219} & \tiny{26369} & \tiny{54330} & \tiny{28056} & \tiny{40593} \\
  & \tiny{$\beta = \frac{1}{n^{4}}$} & \tiny{9063} & \tiny{18281} & \tiny{56276} & \tiny{35984} & \tiny{28454} & \tiny{41219} & \tiny{26369} & \tiny{54330} & \tiny{28056} & \tiny{40593} \\
  \bottomrule
\end{tabular}
\end{table}

\newpage
\section{Numerical results of Subsection \ref{AD} on synthetic data set}\label{AppB}
\begin{table}[H]\caption{A comparison of the algorithms on synthetic data, $n=10000,\alpha = 0.85$}\label{t-2}
\centering
{\begin{tabular}{ccccccc}
  \toprule
  Sparsity of $\mathbf{Q}$ & \multicolumn{2}{c}{Algorithm} & Iter & CPU (s) & Error & Sparsity of $\boldsymbol{S}$  \\
  \midrule
  \multirow{20}{*}{0.0001\%}
  & \multicolumn{2}{c}{The power method} & 9 & 22.75 & --- & --- \\
  & \multicolumn{2}{c}{MNM \cite{Guo2018A}} & \multicolumn{4}{c}{Out of memory}\\
  & \multicolumn{2}{c}{Pb-Ns \cite{Meini2018Perron}} & \multicolumn{4}{c}{Out of memory}\\
  & \multicolumn{2}{c}{ESFPM \cite{Cipolla2019Extrapolation}} & 9 & 15.11 & $1.67 \times 10^{-2}$ & ---\\
  & \multicolumn{2}{c}{EIOM \cite{Cipolla2019Extrapolation}} & 7 & 11.55 & $1.67 \times 10^{-2}$ & ---\\
  \cline{2-7}
  & \multirow{3}{*}{Algorithm \ref{Alg2} \cite{Ding2018Fast}}
  & $\beta = \frac{1}{n^{2}}$ & 6 & 42.61 & $2.99 \times 10^{-1}$ & $1.50 \times 10^{-4}$ \\
  & & $\beta = \frac{1}{n^{3}}$ & 7 & 159.75 & $3.00 \times 10^{-1}$ & $1.00 \times 10^{-2}$ \\
  & & $\beta = \frac{1}{n^{4}}$ & 7 & 156.00 & $3.00 \times 10^{-1}$ & $1.00 \times 10^{-2}$ \\
  \cline{2-7}
  & \multirow{3}{*}{Algorithm \ref{Alg3}}
  & $\beta = \frac{1}{n^{2}}$ & 4 & 17.28 & $1.67 \times 10^{-2}$ & $4.96 \times 10^{-5}$ \\
  & & $\beta = \frac{1}{n^{3}}$ & 6 & 125.00 & $1.02 \times 10^{-4}$ & $1.00 \times 10^{-2}$ \\
  & & $\beta = \frac{1}{n^{4}}$ & 6 & 119.15 & $1.01 \times 10^{-4}$ & $1.00 \times 10^{-2}$ \\
    \cline{2-7}
  & \multirow{3}{*}{Algorithm \ref{Alg5}}
  & $\beta = \frac{1}{n^{2}}$ & 3 & 3.90 & $1.67 \times 10^{-2}$ & $4.93 \times 10^{-5}$ \\
  & & $\beta = \frac{1}{n^{3}}$ & 4 & 58.81 & $7.35 \times 10^{-4}$ & $1.00 \times 10^{-2}$ \\
  & & $\beta = \frac{1}{n^{4}}$ & 4 & 61.61 & $7.35 \times 10^{-4}$ & $1.00 \times 10^{-2}$ \\
  \cline{2-7}
  & \multirow{3}{*}{Algorithm \ref{Alg4}}
  & $\beta = \frac{1}{n^{2}}$ & 4 & 17.12 & $1.67 \times 10^{-2}$ & $4.96 \times 10^{-5}$ \\
  & & $\beta = \frac{1}{n^{3}}$ & 6 & 118.35 & $1.99 \times 10^{-6}$ & $1.00 \times 10^{-2}$ \\
  & & $\beta = \frac{1}{n^{4}}$ & 6 & 115.37 & $4.38 \times 10^{-10}$ & $1.00 \times 10^{-2}$ \\
  \cline{2-7}
  & \multirow{3}{*}{Algorithm \ref{Alg6}}
  & $\beta = \frac{1}{n^{2}}$ & 3 & 3.74 & $1.67 \times 10^{-2}$ & $4.93 \times 10^{-5}$ \\
  & & $\beta = \frac{1}{n^{3}}$ & 4 & 55.09 & $7.28 \times 10^{-4}$ & $1.00 \times 10^{-2}$ \\
  & & $\beta = \frac{1}{n^{4}}$ & 4 & 64.07 & $7.29 \times 10^{-4}$ & $1.00 \times 10^{-2}$ \\
  \bottomrule
  \multirow{20}{*}{0.00001\%}
  & \multicolumn{2}{c}{A power method} & 7 & 19.89 & --- & ---\\
  & \multicolumn{2}{c}{MNM \cite{Guo2018A}} & \multicolumn{4}{c}{Out of memory} \\
  & \multicolumn{2}{c}{Pb-N \cite{Meini2018Perron}} & \multicolumn{4}{c}{Out of memory} \\
  & \multicolumn{2}{c}{ESFPM \cite{Cipolla2019Extrapolation}} & 8 & 12.90 & $1.70 \times 10^{-3}$ & ---\\
  & \multicolumn{2}{c}{EIOM \cite{Cipolla2019Extrapolation}} & 6 & 9.76 & $1.70 \times 10^{-3}$ & ---\\
  \cline{2-7}
  & \multirow{3}{*}{Algorithm \ref{Alg2} \cite{Ding2018Fast}}
  & $\beta = \frac{1}{n^{2}}$ & 4 & 24.53 & $3.00 \times 10^{-1}$ & $1.44 \times 10^{-6}$\\
  & & $\beta = \frac{1}{n^{3}}$ & 6 & 77.66 & $3.00 \times 10^{-1}$ & $9.99 \times 10^{-4}$ \\
  & & $\beta = \frac{1}{n^{4}}$ & 6 & 77.09 & $3.00 \times 10^{-1}$ & $9.99 \times 10^{-4}$\\
  \cline{2-7}
  & \multirow{3}{*}{Algorithm \ref{Alg3}}
  & $\beta = \frac{1}{n^{2}}$ & 3 & 11.32 & $1.70 \times 10^{-3}$ & $5.50 \times 10^{-7}$\\
  & & $\beta = \frac{1}{n^{3}}$ & 5 & 50.82 & $3.19 \times 10^{-5}$ & $9.99 \times 10^{-4}$ \\
  & & $\beta = \frac{1}{n^{4}}$ & 5 & 50.22 & $3.18 \times 10^{-5}$ & $9.99 \times 10^{-4}$\\
  \cline{2-7}
  & \multirow{3}{*}{Algorithm \ref{Alg5}}
  & $\beta = \frac{1}{n^{2}}$ & 3 & 4.28 & $1.70 \times 10^{-3}$ & $5.50 \times 10^{-7}$\\
  & & $\beta = \frac{1}{n^{3}}$ & 3 & 10.18 & $1.94 \times 10^{-4}$ & $9.99 \times 10^{-4}$ \\
  & & $\beta = \frac{1}{n^{4}}$ & 3 & 1.58 & $1.95 \times 10^{-4}$ & $9.99 \times 10^{-4}$\\
  \cline{2-7}
  & \multirow{3}{*}{Algorithm \ref{Alg4}}
  & $\beta = \frac{1}{n^{2}}$ & 3 & 11.16 & $1.70 \times 10^{-3}$ & $5.50 \times 10^{-7}$\\
  & & $\beta = \frac{1}{n^{3}}$ & 5 & 49.98 & $2.00 \times 10^{-7}$ & $9.99 \times 10^{-4}$ \\
  & & $\beta = \frac{1}{n^{4}}$ & 5 & 50.82 & $2.83 \times 10^{-11}$ & $9.99 \times 10^{-4}$\\
  \cline{2-7}
  & \multirow{3}{*}{Algorithm \ref{Alg6}}
  & $\beta = \frac{1}{n^{2}}$ & 3 & 4.49 & $1.70 \times 10^{-3}$ & $5.50 \times 10^{-7}$\\
  & & $\beta = \frac{1}{n^{3}}$ & 3 & 10.17 & $1.91 \times 10^{-4}$ & $9.99 \times 10^{-4}$\\
  & & $\beta = \frac{1}{n^{4}}$ & 3 & 9.99 & $1.91 \times 10^{-4}$ & $9.99 \times 10^{-4}$\\
  \bottomrule
\end{tabular}}
\end{table}

\begin{table}[H]\caption{A comparison of the algorithms on synthetic data, $n=10000,\alpha = 0.90$}\label{t-3}
\centering
\begin{tabular}{ccccccc}
  \toprule
  Sparsity of $\mathbf{Q}$ & \multicolumn{2}{c}{Algorithm} & Iter & CPU (s) & Error & Sparsity of $\boldsymbol{S}$  \\
  \midrule
  \multirow{20}{*}{0.0001\%}
  & \multicolumn{2}{c}{The power method} & 10 & 27.72 & --- & --- \\
  & \multicolumn{2}{c}{MNM \cite{Guo2018A}} & \multicolumn{4}{c}{Out of memory}\\
  & \multicolumn{2}{c}{Pb-Ns \cite{Meini2018Perron}} & \multicolumn{4}{c}{Out of memory}\\
  & \multicolumn{2}{c}{ESFPM \cite{Cipolla2019Extrapolation}} & 9 & 16.29 & $1.77 \times 10^{-2}$ & ---\\
  & \multicolumn{2}{c}{EIOM \cite{Cipolla2019Extrapolation}} & 8 & 14.26 & $1.77 \times 10^{-2}$ & ---\\
  \cline{2-7}
  & \multirow{3}{*}{Algorithm \ref{Alg2} \cite{Ding2018Fast}}
  & $\beta = \frac{1}{n^{2}}$ & 6 & 45.40 & $2.00 \times 10^{-1}$ & $1.50 \times 10^{-4}$ \\
  & & $\beta = \frac{1}{n^{3}}$ & 7 & 148.91 & $2.00 \times 10^{-1}$ & $1.00 \times 10^{-2}$ \\
  & & $\beta = \frac{1}{n^{4}}$ & 7 & 154.50 & $2.00 \times 10^{-1}$ & $1.00 \times 10^{-2}$ \\
  \cline{2-7}
  & \multirow{3}{*}{Algorithm \ref{Alg3}}
  & $\beta = \frac{1}{n^{2}}$ & 4 & 18.02 & $1.77 \times 10^{-2}$ & $4.97 \times 10^{-5}$\\
  & & $\beta = \frac{1}{n^{3}}$ & 6 & 127.21 & $7.20 \times 10^{-5}$ & $1.00 \times 10^{-2}$\\
  & & $\beta = \frac{1}{n^{4}}$ & 6 & 122.65 & $7.16 \times 10^{-5}$ & $1.00 \times 10^{-2}$\\
  \cline{2-7}
  & \multirow{3}{*}{Algorithm \ref{Alg5}}
  & $\beta = \frac{1}{n^{2}}$ & 3 & 4.43 & $1.77 \times 10^{-2}$ & $4.93 \times 10^{-5}$\\
  & & $\beta = \frac{1}{n^{3}}$ & 4 & 52.33 & $7.83 \times 10^{-4}$ & $1.00 \times 10^{-2}$ \\
  & & $\beta = \frac{1}{n^{4}}$ & 4 & 66.84 & $7.84 \times 10^{-4}$ & $1.00 \times 10^{-2}$\\
  \cline{2-7}
  & \multirow{3}{*}{Algorithm \ref{Alg4}}
  & $\beta = \frac{1}{n^{2}}$ & 5 & 22.32 & $1.77 \times 10^{-2}$ & $4.97 \times 10^{-5}$\\
  & & $\beta = \frac{1}{n^{3}}$ & 6 & 121.06 & $1.99 \times 10^{-6}$ & $1.00 \times 10^{-2}$ \\
  & & $\beta = \frac{1}{n^{4}}$ & 6 & 125.97& $5.25 \times 10^{-10}$ & $1.00 \times 10^{-2}$\\
  \cline{2-7}
  & \multirow{3}{*}{Algorithm \ref{Alg6}}
  & $\beta = \frac{1}{n^{2}}$ & 3 & 4.34 & $1.77 \times 10^{-2}$ & $4.93 \times 10^{-5}$\\
  & & $\beta = \frac{1}{n^{3}}$ & 4 & 59.66 & $7.80 \times 10^{-4}$ & $1.00 \times 10^{-2}$ \\
  & & $\beta = \frac{1}{n^{4}}$ & 4 & 57.09 & $7.81 \times 10^{-4}$ & $1.00 \times 10^{-2}$\\
  \bottomrule
  \multirow{20}{*}{0.00001\%}
  & \multicolumn{2}{c}{The power method} & 7 & 20.18 & --- & ---\\
  & \multicolumn{2}{c}{MNM \cite{Guo2018A}} & \multicolumn{4}{c}{Out of memory} \\
  & \multicolumn{2}{c}{Pb-N \cite{Meini2018Perron}} & \multicolumn{4}{c}{Out of memory} \\
  & \multicolumn{2}{c}{ESFPM \cite{Cipolla2019Extrapolation}} & 8 & 13.44 & $1.80 \times 10^{-3}$ & ---\\
  & \multicolumn{2}{c}{EIOM \cite{Cipolla2019Extrapolation}} & 7 & 11.87 & $1.80 \times 10^{-3}$ & ---\\
  \cline{2-7}
  & \multirow{3}{*}{Algorithm \ref{Alg2} \cite{Ding2018Fast}}
  & $\beta = \frac{1}{n^{2}}$ & 4 & 24.64 & $2.00 \times 10^{-1}$ & $1.44 \times 10^{-6}$\\
  & & $\beta = \frac{1}{n^{3}}$ & 6 & 76.65 & $2.00 \times 10^{-1}$ & $9.99 \times 10^{-4}$ \\
  & & $\beta = \frac{1}{n^{4}}$ & 6 & 73.86 & $2.00 \times 10^{-1}$ & $9.99 \times 10^{-4}$\\
  \cline{2-7}
  & \multirow{3}{*}{Algorithm \ref{Alg3}}
  & $\beta = \frac{1}{n^{2}}$ & 3 & 12.03 & $1.80 \times 10^{-3}$ & $5.50 \times 10^{-7}$\\
  & & $\beta = \frac{1}{n^{3}}$ & 5 & 51.16 & $2.26 \times 10^{-5}$ & $9.99 \times 10^{-4}$ \\
  & & $\beta = \frac{1}{n^{4}}$ & 5 & 50.28 & $2.25 \times 10^{-5}$ & $9.99 \times 10^{-4}$\\
  \cline{2-7}
  & \multirow{3}{*}{Algorithm \ref{Alg5}}
  & $\beta = \frac{1}{n^{2}}$ & 3 & 4.31 & $1.80 \times 10^{-3}$ & $5.50 \times 10^{-7}$\\
  & & $\beta = \frac{1}{n^{3}}$ & 3 & 10.52 & $2.05 \times 10^{-4}$ & $9.99 \times 10^{-4}$ \\
  & & $\beta = \frac{1}{n^{4}}$ & 3 & 10.42 & $2.05 \times 10^{-4}$ & $9.99 \times 10^{-4}$\\
  \cline{2-7}
  & \multirow{3}{*}{Algorithm \ref{Alg4}}
  & $\beta = \frac{1}{n^{2}}$ & 3 & 11.50 & $1.80 \times 10^{-3}$ & $5.50 \times 10^{-7}$\\
  & & $\beta = \frac{1}{n^{3}}$ & 5 & 52.27 & $2.00 \times 10^{-7}$ & $9.99 \times 10^{-4}$ \\
  & & $\beta = \frac{1}{n^{4}}$ & 5 & 53.55 & $3.09 \times 10^{-11}$ & $9.99 \times 10^{-4}$\\
  \cline{2-7}
  & \multirow{3}{*}{Algorithm \ref{Alg6}}
  & $\beta = \frac{1}{n^{2}}$ & 3 & 4.39 & $1.80 \times 10^{-3}$ & $5.50 \times 10^{-7}$\\
  & & $\beta = \frac{1}{n^{3}}$ & 3 & 10.73 & $2.03 \times 10^{-4}$ & $9.99 \times 10^{-4}$\\
  & & $\beta = \frac{1}{n^{4}}$ & 3 & 10.22 & $2.03 \times 10^{-4}$ & $9.99 \times 10^{-4}$\\
  \bottomrule
\end{tabular}
\end{table}

\begin{table}[H]\caption{A comparison of the algorithms on synthetic data, $n=10000,\alpha = 0.95$}\label{t-4}
\centering
\begin{tabular}{ccccccc}
  \toprule
  Sparsity of $\mathbf{Q}$ & \multicolumn{2}{c}{Algorithm} & Iter & CPU (s) & Error & Sparsity of $\boldsymbol{S}$  \\
  \midrule
  \multirow{20}{*}{0.0001\%}
  & \multicolumn{2}{c}{The power method} & 10 & 30.45 & --- & --- \\
  & \multicolumn{2}{c}{MNM \cite{Guo2018A}} & \multicolumn{4}{c}{Out of memory}\\
  & \multicolumn{2}{c}{Pb-Ns \cite{Meini2018Perron}} & \multicolumn{4}{c}{Out of memory}\\
  & \multicolumn{2}{c}{ESFPM \cite{Cipolla2019Extrapolation}} & 9 & 15.73 & $1.87 \times 10^{-2}$ & ---\\
  & \multicolumn{2}{c}{EIOM \cite{Cipolla2019Extrapolation}} & 8 & 14.05 & $1.87 \times 10^{-2}$ & ---\\
  \cline{2-7}
  & \multirow{3}{*}{Algorithm \ref{Alg2} \cite{Ding2018Fast}}
  & $\beta = \frac{1}{n^{2}}$ & 6 & 45.50 & $9.98 \times 10^{-2}$ & $1.50 \times 10^{-4}$ \\
  & & $\beta = \frac{1}{n^{3}}$ & 7 & 146.32 & $1.00 \times 10^{-1}$ & $1.00 \times 10^{-2}$ \\
  & & $\beta = \frac{1}{n^{4}}$ & 7 & 149.41 & $1.00 \times 10^{-1}$ & $1.00 \times 10^{-2}$ \\
  \cline{2-7}
  & \multirow{3}{*}{Algorithm \ref{Alg3}}
  & $\beta = \frac{1}{n^{2}}$ & 5 & 24.24 & $1.86 \times 10^{-2}$ & $4.97 \times 10^{-5}$ \\
  & & $\beta = \frac{1}{n^{3}}$ & 7 & 134.60 & $3.84 \times 10^{-5}$ & $1.00 \times 10^{-2}$ \\
  & & $\beta = \frac{1}{n^{4}}$ & 7 & 130.80 & $3.78 \times 10^{-5}$ & $1.00 \times 10^{-2}$ \\
  \cline{2-7}
  & \multirow{3}{*}{Algorithm \ref{Alg5}}
  & $\beta = \frac{1}{n^{2}}$ & 3 & 4.93& $1.86 \times 10^{-2}$ & $4.93 \times 10^{-5}$ \\
  & & $\beta = \frac{1}{n^{3}}$ & 4 & 66.91 & $8.33 \times 10^{-4}$ & $1.00 \times 10^{-2}$ \\
  & & $\beta = \frac{1}{n^{4}}$ & 4 & 53.78 & $8.34 \times 10^{-4}$ & $1.00 \times 10^{-2}$ \\
  \cline{2-7}
  & \multirow{3}{*}{Algorithm \ref{Alg4}}
  & $\beta = \frac{1}{n^{2}}$ & 5 & 23.47 & $1.86 \times 10^{-2}$ & $4.97 \times 10^{-5}$ \\
  & & $\beta = \frac{1}{n^{3}}$ & 7 & 139.64 & $1.99 \times 10^{-6}$ & $1.00 \times 10^{-2}$ \\
  & & $\beta = \frac{1}{n^{4}}$ & 7 & 129.28 & $2.02 \times 10^{-10}$ & $1.00 \times 10^{-2}$ \\
  \cline{2-7}
  & \multirow{3}{*}{Algorithm \ref{Alg6}}
  & $\beta = \frac{1}{n^{2}}$ & 3 & 4.70 & $1.86 \times 10^{-2}$ & $4.93 \times 10^{-5}$ \\
  & & $\beta = \frac{1}{n^{3}}$ & 4 & 56.84 & $8.32 \times 10^{-4}$ & $1.00 \times 10^{-2}$ \\
  & & $\beta = \frac{1}{n^{4}}$ & 4 & 66.45 & $8.33 \times 10^{-4}$ & $1.00 \times 10^{-2}$ \\
  \bottomrule
  \multirow{20}{*}{0.00001\%}
  & \multicolumn{2}{c}{The power method} & 7 & 20.41 & --- & ---\\
  & \multicolumn{2}{c}{MNM \cite{Guo2018A}} & \multicolumn{4}{c}{Out of memory} \\
  & \multicolumn{2}{c}{Pb-N \cite{Meini2018Perron}} & \multicolumn{4}{c}{Out of memory} \\
  & \multicolumn{2}{c}{ESFPM \cite{Cipolla2019Extrapolation}} & 8 & 13.70 & $1.90 \times 10^{-3}$ & ---\\
  & \multicolumn{2}{c}{EIOM \cite{Cipolla2019Extrapolation}} & 7 & 11.78 & $1.90 \times 10^{-3}$ & ---\\
  \cline{2-7}
  & \multirow{3}{*}{Algorithm \ref{Alg2} \cite{Ding2018Fast}}
  & $\beta = \frac{1}{n^{2}}$ & 4 & 26.76 & $1.00 \times 10^{-1}$ & $1.44 \times 10^{-6}$\\
  & & $\beta = \frac{1}{n^{3}}$ & 6 & 81.82 & $1.00 \times 10^{-1}$ & $9.99 \times 10^{-4}$ \\
  & & $\beta = \frac{1}{n^{4}}$ & 6 & 82.12 & $1.00 \times 10^{-1}$ & $9.99 \times 10^{-4}$\\
  \cline{2-7}
  & \multirow{3}{*}{Algorithm \ref{Alg3}}
  & $\beta = \frac{1}{n^{2}}$ & 3 & 12.90 & $1.90 \times 10^{-3}$ & $5.50 \times 10^{-7}$\\
  & & $\beta = \frac{1}{n^{3}}$ & 5 & 53.35 & $1.20 \times 10^{-5}$ & $9.99 \times 10^{-4}$ \\
  & & $\beta = \frac{1}{n^{4}}$ & 5 & 54.03 & $1.19 \times 10^{-5}$ & $9.99 \times 10^{-4}$\\
  \cline{2-7}
  & \multirow{3}{*}{Algorithm \ref{Alg5}}
  & $\beta = \frac{1}{n^{2}}$ & 3 & 4.43 & $1.90 \times 10^{-3}$ & $5.50 \times 10^{-7}$\\
  & & $\beta = \frac{1}{n^{3}}$ & 3 & 10.82 & $2.15 \times 10^{-4}$ & $9.99 \times 10^{-4}$ \\
  & & $\beta = \frac{1}{n^{4}}$ & 3 & 10.64 & $2.15 \times 10^{-4}$ & $9.99 \times 10^{-4}$\\
  \cline{2-7}
  & \multirow{3}{*}{Algorithm \ref{Alg4}}
  & $\beta = \frac{1}{n^{2}}$ & 3 & 12.21 & $1.90 \times 10^{-3}$ & $5.50 \times 10^{-7}$\\
  & & $\beta = \frac{1}{n^{3}}$ & 5 & 54.28 & $2.00 \times 10^{-7}$ & $9.99 \times 10^{-4}$ \\
  & & $\beta = \frac{1}{n^{4}}$ & 5 & 54.31 & $3.40 \times 10^{-11}$ & $9.99 \times 10^{-4}$\\
  \cline{2-7}
  & \multirow{3}{*}{Algorithm \ref{Alg6}}
  & $\beta = \frac{1}{n^{2}}$ & 3 & 4.54 & $1.90 \times 10^{-3}$ & $5.50 \times 10^{-7}$\\
  & & $\beta = \frac{1}{n^{3}}$ & 3 & 10.82 & $2.14 \times 10^{-4}$ & $9.99 \times 10^{-4}$\\
  & & $\beta = \frac{1}{n^{4}}$ & 3 & 10.77 & $2.14\times 10^{-4}$ & $9.99 \times 10^{-4}$\\
  \bottomrule
\end{tabular}
\end{table}

\begin{table}[H]\caption{A comparison of the algorithms on synthetic data, $n=10000,\alpha = 0.99$}\label{t-5}
\centering
\begin{tabular}{ccccccc}
  \toprule
  Sparsity of $\mathbf{Q}$ & \multicolumn{2}{c}{Algorithm} & Iter & CPU (s) & Error & Sparsity of $\boldsymbol{S}$  \\
  \midrule
  \multirow{20}{*}{0.0001\%}
  & \multicolumn{2}{c}{The power method} & 10 & 29.59 & --- & --- \\
  & \multicolumn{2}{c}{MNM \cite{Guo2018A}} & \multicolumn{4}{c}{Out of memory}\\
  & \multicolumn{2}{c}{Pb-Ns \cite{Meini2018Perron}} & \multicolumn{4}{c}{Out of memory}\\
  & \multicolumn{2}{c}{ESFPM \cite{Cipolla2019Extrapolation}} & 9 & 15.69 & $1.95 \times 10^{-2}$ & ---\\
  & \multicolumn{2}{c}{EIOM \cite{Cipolla2019Extrapolation}} & 9 & 15.58 & $1.95 \times 10^{-2}$ & ---\\
  \cline{2-7}
  & \multirow{3}{*}{Algorithm \ref{Alg2} \cite{Ding2018Fast}}
  & $\beta = \frac{1}{n^{2}}$ & 6 & 47.44 & $2.07 \times 10^{-2}$ & $ 1.50 \times 10^{-4}$ \\
  & & $\beta = \frac{1}{n^{3}}$ & 7 & 152.43 & $2.00 \times 10^{-2}$ & $1.00 \times 10^{-2}$ \\
  & & $\beta = \frac{1}{n^{4}}$ & 7 & 154.12 & $2.00 \times 10^{-2}$ & $1.00 \times 10^{-2}$ \\
  \cline{2-7}
  & \multirow{3}{*}{Algorithm \ref{Alg3}}
  & $\beta = \frac{1}{n^{2}}$ & 5 & 24.06 & $1.94 \times 10^{-2}$ & $4.97 \times 10^{-2}$ \\
  & & $\beta = \frac{1}{n^{3}}$ & 7 & 142.00 & $8.91 \times 10^{-6}$ & $1.00 \times 10^{-2}$ \\
  & & $\beta = \frac{1}{n^{4}}$ & 7 & 133.00 & $7.89 \times 10^{-6}$ & $1.00 \times 10^{-2}$ \\
  \cline{2-7}
  & \multirow{3}{*}{Algorithm \ref{Alg5}}
  & $\beta = \frac{1}{n^{2}}$ & 3 & 4.53 & $1.94 \times 10^{-2}$ & $4.93 \times 10^{-5}$ \\
  & & $\beta = \frac{1}{n^{3}}$ & 4 & 61.00 & $8.75 \times 10^{-4}$ & $1.00 \times 10^{-2}$ \\
  & & $\beta = \frac{1}{n^{4}}$ & 4 & 66.05 & $8.76 \times 10^{-4}$ & $1.00 \times 10^{-2}$ \\
  \cline{2-7}
  & \multirow{3}{*}{Algorithm \ref{Alg4}}
  & $\beta = \frac{1}{n^{2}}$ & 5 & 23.93 & $1.94 \times 10^{-2}$ & $4.97 \times 10^{-2}$ \\
  & & $\beta = \frac{1}{n^{3}}$ & 7 & 133.26 & $1.99 \times 10^{-6}$ & $1.00 \times 10^{-2}$ \\
  & & $\beta = \frac{1}{n^{4}}$ & 7 & 140.71 & $2.03 \times 10^{-10}$ & $1.00 \times 10^{-2}$ \\
  \cline{2-7}
  & \multirow{3}{*}{Algorithm \ref{Alg6}}
  & $\beta = \frac{1}{n^{2}}$ & 3 & 4.57 & $1.94 \times 10^{-2}$ & $4.93 \times 10^{-5}$ \\
  & & $\beta = \frac{1}{n^{3}}$ & 4 & 53.90 & $8.75 \times 10^{-4}$ & $1.00 \times 10^{-2}$ \\
  & & $\beta = \frac{1}{n^{4}}$ & 4 & 56.80 & $8.76 \times 10^{-4}$ & $1.00 \times 10^{-2}$ \\
  \bottomrule
  \multirow{20}{*}{0.00001\%}
  & \multicolumn{2}{c}{The power method} & 7 & 20.64 & --- & ---\\
  & \multicolumn{2}{c}{MNM \cite{Guo2018A}} & \multicolumn{4}{c}{Out of memory} \\
  & \multicolumn{2}{c}{Pb-N \cite{Meini2018Perron}} & \multicolumn{4}{c}{Out of memory} \\
  & \multicolumn{2}{c}{ESFPM \cite{Cipolla2019Extrapolation}} & 8 & 13.31 & $2.00 \times 10^{-3}$ & ---\\
  & \multicolumn{2}{c}{EIOM \cite{Cipolla2019Extrapolation}} & 8 & 13.37 & $2.00 \times 10^{-3}$ & ---\\
  \cline{2-7}
  & \multirow{3}{*}{Algorithm \ref{Alg2} \cite{Ding2018Fast}}
  & $\beta = \frac{1}{n^{2}}$ & 4 & 25.88 & $2.00 \times 10^{-2}$ & $1.44 \times 10^{-6}$\\
  & & $\beta = \frac{1}{n^{3}}$ & 6 & 78.01 & $2.00 \times 10^{-2}$ & $9.99 \times 10^{-4}$ \\
  & & $\beta = \frac{1}{n^{4}}$ & 6 & 82.39 & $2.00 \times 10^{-2}$ & $9.99 \times 10^{-4}$\\
  \cline{2-7}
  & \multirow{3}{*}{Algorithm \ref{Alg3}}
  & $\beta = \frac{1}{n^{2}}$ & 3 & 12.03 & $2.00 \times 10^{-3}$ & $5.50 \times 10^{-7}$\\
  & & $\beta = \frac{1}{n^{3}}$ & 5 & 53.21 & $2.61 \times 10^{-6}$ & $9.99 \times 10^{-4}$ \\
  & & $\beta = \frac{1}{n^{4}}$ & 5 & 52.90 & $2.47 \times 10^{-6}$ & $9.99 \times 10^{-4}$\\
  \cline{2-7}
  & \multirow{3}{*}{Algorithm \ref{Alg5}}
  & $\beta = \frac{1}{n^{2}}$ & 3 & 4.47 & $2.00 \times 10^{-3}$ & $5.50 \times 10^{-7}$\\
  & & $\beta = \frac{1}{n^{3}}$ & 3 & 10.50 & $2.23 \times 10^{-4}$ & $9.99 \times 10^{-4}$ \\
  & & $\beta = \frac{1}{n^{4}}$ & 3 & 10.65 & $2.23 \times 10^{-4}$ & $9.99 \times 10^{-4}$\\
  \cline{2-7}
  & \multirow{3}{*}{Algorithm \ref{Alg4}}
  & $\beta = \frac{1}{n^{2}}$ & 3 & 11.62 & $2.00 \times 10^{-3}$ & $5.50 \times 10^{-7}$\\
  & & $\beta = \frac{1}{n^{3}}$ & 5 & 52.94 & $2.00 \times 10^{-7}$ & $9.99 \times 10^{-4}$ \\
  & & $\beta = \frac{1}{n^{4}}$ & 5 & 51.34 & $3.67 \times 10^{-11}$ & $9.99 \times 10^{-4}$\\
  \cline{2-7}
  & \multirow{3}{*}{Algorithm \ref{Alg6}}
  & $\beta = \frac{1}{n^{2}}$ & 3 & 4.27 & $2.00 \times 10^{-3}$ & $5.50 \times 10^{-7}$\\
  & & $\beta = \frac{1}{n^{3}}$ & 3 & 10.55 & $2.23 \times 10^{-4}$ & $9.99 \times 10^{-4}$\\
  & & $\beta = \frac{1}{n^{4}}$ & 3 & 10.66 & $2.23 \times 10^{-4}$ & $9.99 \times 10^{-4}$\\
  \bottomrule
\end{tabular}
\end{table}


\begin{thebibliography}{99}
%\bibitem{Brezinski2006}
%{\sc C.~Brezinski and M.~Redivo-Zaglia}, {\em The {P}age{R}ank vector:
%  properties, computation, approximation, and acceleration}, SIAM Journal on Matrix Analysis and Applications, 28 (2006), pp.~551--575.

%\bibitem{Brezinski2005A}
%{\sc C.~Brezinski, M.~Revivo-Zaglia, and S.~Serra-Capizzano}, {\em
%  Extrapolation methods for {P}age{R}ank computations}, Comptes Rendus Mathematique, 340
%  (2005), pp.~393--397.

\bibitem{Ching2004}
{\sc W. Ching, E. Fung, and M. N. Ng}, {\em Higher--order {M}arkov chain
  models for categorical data sequences}, Naval Research Logistics, 51 (2004),
  pp.~557--574.

\bibitem{Ching2006}
{\sc W. Ching and M. N. Ng}, {\em Markov Chains: Models, Algorithms and
  Applications}, Springer, 2006.

\bibitem{Ching2008}
{\sc W. Ching, M. N. Ng, and E. Fung}, {\em Higher-oreder multivariate
  {M}arkov chains and their applications}, Linear Algebra and Its Applications,
  428 (2008), pp.~492--507.

\bibitem{CKL1979A}
{\sc K. Chung}, {\em Elementary {P}robability {T}heory with {S}tochastic
  {P}rocesses}, Springer, 1979.

\bibitem{CiarletLNFA}
{\sc P. Ciarlet}, {\em Linear and Nonlinear Functional Analysis with
  Applications}, SIAM, Philadelphia, 2013.

\bibitem{Cipolla2019Extrapolation}
{\sc S.~Cipolla, M.~Redivo-Zaglia, and F.~Tudisco}, {\em Extrapolation methods
  for fixed-point {M}ultilinear {P}age{R}ank computations}, Numerical Linear
  Algebra with Applications, 27 (2020), Article: e2280.

\bibitem{Ding2018Fast}
{\sc W. Ding, M. N. Ng, and Y. Wei}, {\em Fast computation of stationary
  joint probability distribution of sparse Markov chains}, Applied Numerical
  Mathematics, 125 (2018), pp.~68--85.

\bibitem{Gleich2105PBW}
{\sc D. Gleich}, {\em Page{R}ank beyond the web}, SIAM Review, 57
  (2015), pp.~321--363.

%\bibitem{Gleich2010IOM}
%{\sc D. Gleich, A. Gray, C.~Greif, and T.~Lau}, {\em An inner-outer
%  iteration for computing {P}age{R}ank}, SIAM Journal on Scientific Computing, 32 (2010),
%  pp.~349--371.

\bibitem{Gleich2014Multilinear}
{\sc D. Gleich, L. Lim, and Y.~Yu}, {\em Multilinear {P}age{R}ank}, SIAM Journal on Matrix Analysis and Applications, 36 (2015), pp.~1507--1541.

\bibitem{GHGolubVanLoan}
{\sc G.~H. Golub and C.~F. Van~Loan}, {\em Matrix Computations, 4th edition},
  The Johns Hopkins University Press, Baltimore, 2013.

\bibitem{Pei2020A}
{\sc P. Guo}, {\em A residual-based error bound for the multilinear
  {P}age{R}ank vector}, Linear and Multilinear Algebra, 68 (2020),
  pp.~568--574.

\bibitem{Guo2018A}
{\sc P. Guo, S. Gao, and X. Guo}, {\em A modified {N}ewton method
  for multilinear {P}age{R}ank}, Taiwanese Journal of Mathematics, 22 (2018),
  pp.~1161--1171.

\bibitem{HW}{\sc J. Huang and G. Wu}, {\em Convergence of the fixed-point iteration for multilinear PageRank},
Numerical Linear Algebra with Applications, https://doi.org/10.1002/nla.2379, to appear, 2021.

\bibitem{2021Stationary}{\sc J. Kuntz, P. Thomas, G. Stan, and  M. Barahona}, {\em Stationary distributions of continuous-time Markov chains: a review of theory and truncation-based approximations}, SIAM Review, (63) 2021, pp.~3--64.
%\bibitem{Kamvar2003EM}
%{\sc S. Kamvar, T. Haveliwala, C. Manning, and G.H. Golub}, {\em
%  Extrapolation methods for accelerating {P}age{R}ank computations}, Proceedings of the 12th International Conference on World Wide Web, 2003, pp.~261--270.

\bibitem{langville2011google}
{\sc A. Langville and C. Meyer}, {\em Google's PageRank and Beyond: The
  Science of Search Engine Rankings}, Princeton university press, 2006.

\bibitem{Li2017The}
{\sc W.~Li, D. Liu, M. Ng, and S. Vong}, {\em The uniqueness of
  multilinear {P}age{R}ank vectors}, Numerical Linear Algebra with Applications, 24 (2017), Article: e2107.

\bibitem{Wen2020Multilinear}
{\sc W.~Li, D. Liu, S. Vong, and M. Xiao}, {\em Multilinear
  {P}age{R}ank: uniqueness, error bound and perturbation analysis}, Applied
  Numerical Mathematics, 156 (2020), pp.~584--607.

\bibitem{Li2014A}
{\sc W.~Li and M. Ng}, {\em On the limiting probability distribution of a
  transition probability tensor}, Linear and Multilinear Algebra, 62 (2014),
  pp.~362--385.

\bibitem{li2012har}
{\sc X.~Li, M. Ng, and Y.~Ye}, {\em Har: hub, authority and relevance scores
  in multi-relational data for query search}, Proceedings of the 2012 SIAM International Conference on Data Mining, 2012, pp.~141--152.

\bibitem{Dongdong2019Relaxation}
{\sc D. Liu, W.~Li, and S. Vong}, {\em Relaxation methods for solving the
  tensor equation arising from the higher-order {M}arkov chains}, Numerical Linear Algebra with Applications, 26 (2019), Article: e2260.

\bibitem{Meini2018Perron}
{\sc B.~Meini and F.~Poloni}, {\em Perron-based algorithms for the multilinear
  {P}age{R}ank}, Numerical Linear Algebra with Applications, 25 (2018), Article: e2177.

\bibitem{page1999pagerank}
{\sc L.~Page, S.~Brin, R.~Motwani, and T.~Winograd}, {\em The {P}age{R}ank
  citation ranking: bringing order to the web}, Stanford Digital Libraries Working Paper, 1999.

\bibitem{Raftery1985A}
{\sc A. Raftery}, {\em A model for high-order {M}arkov chains}, Journal of
  the Royal Statal Society, 47 (1985), pp.~528--539.

\bibitem{WJ2009A}
{\sc W. Stewart}, {\em Probability, Markov Chains, Queues, and Simulation: The Mathematical Basis of Performance Modeling}, Princeton University Press,
  2009.

\bibitem{voorhees2005trec}
{\sc E. Voorhees and D. Harman}, {\em TREC: Experiment and Evaluation in
  Information Retrieval, vol. 1}, MIT Press, 2005.

\bibitem{Wu2012A}
{\sc G.~Wu, Y. Wang, and X. Jin}, {\em A preconditioned and shifted
  {G}{M}{R}{E}{S} algorithm for the {P}age{R}ank problem with multiple
  damping factors}, SIAM Journal on Scientific Computing, 34 (2012), pp.~A2558--A2575.

%\bibitem{Wu2007}
%{\sc G.~Wu and Y. Wei}, {\em A {P}ower-{A}rnoldi algorithm for computing
%  {P}age{R}ank}, Numerical Linear Algebra with Applications, 14 (2007), pp.~521--546.

%\bibitem{Wu2010}
%{\sc G.~Wu and Y. Wei}, {\em Arnoldi versus GMRES for computing
%  {P}agerank: a theoretical contribution to \text{Google's} {P}age{R}ank
%  problem}, ACM Transactions on Information Systems, 28 (2010), Article: 11.

\bibitem{WuYubao}
{\sc Y.~Wu, Y.~Bian, and X.~Zhang}, {\em Remember where you came from: on the
  second-order random walk based proximity measures}, Proceedings of the VLDB Endowment, 10
  (2016), pp.~13--24.

\end{thebibliography}
\end{document}